%%%%%%%%%%%%%%%%%%%%%%%%%%%%%%%%%%%%%%%%%%%%%%%%%
%% Part of Niles Johnson's latex setup
%% This document is in the public domain (2015)
%%%%%%%%%%%%%%%%%%%%%%%%%%%%%%%%%%%%%%%%%%%%%%%%%

\documentclass[11pt]{amsart}
\usepackage{fouriernc} % Fourier fonts instead of Computer Modern

\usepackage{Definitions}    % macros
\usepackage{PageSetup}      % general setup
\usepackage{Environments}   % thm, prop, etc.

\usepackage{afterpage}
\usepackage{mathtools}
\usepackage{subcaption}
\usepackage{lscape}

%% document-specific options for hyperref
\hypersetup{
 pdfkeywords={latex starter,template},
 pdfauthor={Niles Johnson},
}

\usepackage{tikz-cd} % for tikz commutative diagrams 

%% MSC
%% http://www.ams.org/mathscinet/msc/msc2010.html
\subjclass[2010]{16D90, 18D05, 19D55, 55M20, 55R12}

\usetikzlibrary{decorations.pathreplacing,angles,quotes}
\newcommand{\Z}{\mathbb{Z}}
\newcommand{\xto}{\xrightarrow}
\newcommand{\mbf}{\mathbf}
\newcommand{\Sp}{\operatorname{Sp}}

\DeclareMathOperator{\End}{End}
\DeclareMathOperator{\tr}{tr}
\newcommand{\rsm}[4]{{#1}^{({#2},{#3})}_{#4}} %eta or epsilon/dual pair/ endo 2 cell
\newcommand{\lsm}[4]{\,^{({#2},{#3})}{#1}_{#4}} %eta or epsilon/dual pair/ endo 2 cell
\newcommand{\rbm}[4]{{#1}^{#2}_{#4}} %eta or epsilon//functor/ category/endo 2 cell
\newcommand{\lbm}[4]{\,^{#2}{#1}_{#4}}  %eta or epsilon/functor/category// endo 2 cell
\newcommand{\rbmm}[5]{{#1}^{{#2},{#5}}_{{#4}}} %eta or epsilon/category/functor/ endo 2 cell
 %eta or epsilon/category/functor/ endo 2 cell

% SHADOWS
\def\shvar#1#2{{\ensuremath{%
  \hspace{1mm}\makebox[-1mm]{$#1\langle$}\makebox[0mm]{$#1\langle$}\hspace{1mm}%
  {#2}%
  \makebox[1mm]{$#1\rangle$}\makebox[0mm]{$#1\rangle$}%
}}}
\def\sh{\shvar{}}

\title{Topological Hochschild Homology and Higher Characteristics}
\date{\today}
\author{Jonathan A. Campbell}
\address{Department of Mathematics, Vanderbilt University,
1326 Stevenson Center, Nashville, Tennessee, 37240}
%\email{j.campbell}
\author{Kate Ponto} 
\address{Department of Mathematics, University of Kentucky, 719 Patterson Office Tower, Lexington, Kentucky, 40506}
%\email{kate.ponto@uky.edu}

\begin{document}

\begin{abstract}
	We show that an important classical fixed point invariant, the Reidemeister trace, arises as a topological Hochschild homology transfer.  This generalizes a corresponding classical result for the Euler characteristic and is a first step in showing the Reidemeister trace is in the image of  the cyclotomic trace.  The main result follows from developing  the relationship between shadows \cite{p:thesis}, topological Hochschild homology, and Morita invariance in bicategorical generality. 
\end{abstract}
\maketitle

\setcounter{tocdepth}{1}
\tableofcontents

\section{Introduction}

Many of the technical achievements of modern homotopy theory and algebraic geometry are motivated by questions arising from   fixed point theory. Lefschetz's fixed point theorem is an incredibly successful application of cohomology theory, and it provides the intuition for Grothendieck's development of \'{e}tale cohomology, via the Weil conjectures.  Building on the Riemann-Roch theorem, the Atiyah-Singer index theorem \cite{atiyah_singer} is in essence also a fixed point theorem. In each of these theorems, the goal is to obtain geometric information about fixed points from cohomological information. In this paper, we begin to relate the cyclotomic trace to fixed point theory, with topological Hochschild homology playing the role of the cohomology theory. 

The most basic cohomological invariant of a self-map $f\colon X \to X$ is the Lefschetz number; it is a sort of twisted Euler characteristic.  The Lefschetz number detects fixed points, but it is not a complete invariant.   For that we need a more powerful invariant: the Reidemeister trace, defined as follows. Let $\{x_1, \dots, x_n\}$ be the set of fixed points of $f$. We say $x_i$ and $x_j$ are in the same fixed point class if there is a path $\gamma$ from $x_i$ to $x_j$ such that $\gamma \simeq f(\gamma)$ relative $\{x_i, x_j\} =\{f(x_i), f(x_j)\}$. This is an equivalence relation which partitions the set of fixed points into fixed point classes, and the free abelian group on fixed point classes is denoted $\mbf{Z}[\pi_1 (X)_f]$. The Reidemeister trace of $f$ is  $R(f) = \sum_{x_i} \operatorname{ind}(x_i) [x_i] \in \mbf{Z}[\pi_1 X_f].$ We then have $L(f) = \sum_{x_i} \operatorname{ind}(x_i)$. The Reidemeister trace is a more refined invariant than the Lefschetz number since it supports a converse to the Lefschetz fixed point theorem \cite{shi,wecken}.

From the perspective of homotopy theory,  this description of the Reidemeister trace is unsatisfying.  There are many reasons for this.  One is that in this formulation the Reidemeister trace appears to be a 
strange combination of unstable and stable data. 
This can be resolved by recognizing that  the Reidemeister trace 
is a map of spectra 
\[
  S \to \Sigma^\infty_+ \mc{L} X^f,
\] 
where $\mc{L} X^f$ is the space of paths $x \to f(x)$ \cite{p:thesis, p:coincidence}. 

Experience with algebraic $K$-theory makes the above formulation of the Reidemeister trace very suggestive. Algebraic $K$-theory is a universal receptacle for Euler characteristics \cite{waldhausen, barwick, blumberg_gepner_tabuada}, and it comes equipped with the ``cyclotomic trace'' map $K(R) \to \thh (R)$, where the target is an invariant known as topological Hochschild homology \cite{bokstedt_hsiang_madsen}. For a topological space $X$, the algebraic $K$-theory of $X$ is defined to be $K(\Sigma^\infty_+ \Omega X)$, and $\pi_0 K(\Sigma^\infty_+ \Omega X)$ contains a canonical element $[X]$ corresponding to $X$. It is a folk theorem that the composition
 \begin{equation*}
  S \xrightarrow{[X]} K(\Sigma^\infty_+ \Omega X) \xrightarrow{\tr} \thh (\Sigma^\infty_+ \Omega X) \simeq \Sigma^\infty_+ \mc{L} X \to S 
  \end{equation*} 
is the Euler characteristic. 

The appearance of the loop space and Euler characteristic strongly suggests that the ``twisted Euler characteristic'' $R(f)$ should arise in a very similar way, and there should be corresponding higher traces. In future work we show that indeed, $R(f)$ is in the image of some cyclotomic trace. The main step in showing that is completed in this paper.

\begin{thm}\label{thm:lm_generalization}
  Let $X$ be a topological space homotopy equivalent to a finite CW-complex. The Reidemeister trace is naturally equivalent to the $\thh$ transfer
  \[
    \thh(\operatorname{Mod}^c_S) \to  \thh(\operatorname{Mod}^c_{\Sigma^\infty_+ \Omega X})\to  \thh(\operatorname{Mod}^c_{\Sigma^\infty_+ \Omega X}; F).
  \]
\end{thm}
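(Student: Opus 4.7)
The strategy is to realize both the Reidemeister trace and the target THH transfer as instances of a shadow trace in a bicategory of ring spectra and bimodules, and then to use bicategorical Morita invariance to identify these shadows with THH of the module categories $\operatorname{Mod}^c$.

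I would begin by recalling the bicategorical origin of the Reidemeister trace from \cite{p:thesis}: for a self-map $f \colon X \to X$ of a space homotopy equivalent to a finite CW complex, $f$ determines an endomorphism of a canonical 1-cell in a bicategory of parametrized spectra, whose shadow trace is precisely the map $S \to \Sigma^\infty_+ \mc{L} X^f$ defining $R(f)$. Passing through the standard Morita-type equivalence between parametrized spectra over $X$ and modules over $A := \Sigma^\infty_+ \Omega X$, this endomorphism corresponds to a canonical $(A,A)$-bimodule $F$, and the shadow trace is unchanged under this passage.

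The main step is then to prove, in bicategorical generality, that the shadow of an $(A,A)$-bimodule $M$ agrees with $\thh(\operatorname{Mod}^c_A; M)$. Specialising to $M = A$ gives $\thh(\operatorname{Mod}^c_A) \simeq \Sigma^\infty_+ \mc{L} X$, and to $M = F$ gives $\thh(\operatorname{Mod}^c_A; F) \simeq \Sigma^\infty_+ \mc{L} X^f$. Under these identifications the composite
\[
\thh(\operatorname{Mod}^c_S) \to \thh(\operatorname{Mod}^c_A) \to \thh(\operatorname{Mod}^c_A; F)
\]
becomes the shadow trace in disguise: the first arrow is the unit $S = \thh(\operatorname{Mod}^c_S) \to \thh(\operatorname{Mod}^c_A)$ coming from functoriality along $S \to A$ and supplies the input of the trace, while the second arrow records the twist by $F$ and encodes the endomorphism being traced. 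Matching this composite with the bicategorical trace computed in the first paragraph produces the claimed equivalence with $R(f)$.

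The hard part is the Morita invariance statement for shadows. Shadows are not functors in the usual sense: they are defined by a cyclic structure on the bicategory, so comparing shadows under a Morita-type equivalence requires a coherence argument compatible with base change of module categories, with bimodule composition, and with the cyclicity axiom itself. Carrying this out at the promised level of bicategorical generality, rather than reproving it ad hoc for each target bicategory, is precisely the abstract machinery advertised in the introduction, and is where the technical weight of the paper sits; once that machinery is in place, the remaining identifications are natural and the theorem falls out.
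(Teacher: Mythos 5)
Your overall strategy mirrors the paper's: realize the Reidemeister trace as a bicategorical trace in parametrized spectra (citing \cite{p:thesis,p:coincidence}), transport it through the Lind--Malkiewich equivalence to the bicategory of ring spectra and bimodules over $A = \Sigma^\infty_+ \Omega X$, and then compare with THH of module categories via Morita invariance. However, there are two concrete problems.

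First, you describe the first arrow $\thh(\operatorname{Mod}^c_S) \to \thh(\operatorname{Mod}^c_{\Sigma^\infty_+ \Omega X})$ as "the unit coming from functoriality along $S \to A$." That is the wrong map. The map in the theorem is the THH \emph{transfer}, which here is induced by restriction of scalars along the augmentation $\Sigma^\infty_+ \Omega X \to S$; it sends a compact $S$-module $M$ to $M$ with scalars acting through the augmentation. The map you describe would instead be the base-change functor $M \mapsto M \sma_S A$, which on classes sends $[S]$ to $[A]$ rather than to $[S]$ (the sphere viewed as a trivial $A$-module). These differ as soon as $X$ is not a point, and the whole content of the theorem is that the \emph{transfer} --- the Euler characteristic of the augmented module $S$, realized via the dualizability of $_\pi S$ --- recovers the Reidemeister trace. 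Collapsing the transfer to "the unit" obscures exactly the thing being proved.

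Second, you correctly identify that Morita invariance of shadows needs to be made compatible with base change and cyclicity, but you frame the hard part as an \emph{object}-level statement ("the shadow of an $(A,A)$-bimodule $M$ agrees with $\thh(\operatorname{Mod}^c_A;M)$") and then claim the theorem "falls out." The actual key step, and the technical heart of the paper's Sections 4--5, is a statement about the compatibility of \emph{traces of 2-cells}, not just shadows of objects. Concretely, one needs a commutative square asserting that the bicategorical trace of a 2-cell $\phi \colon \mc{Q}\odot\mc{M} \to \mc{M}\odot\mc{P}$ commutes with the Morita isomorphisms $\sh{_{E_c}\mc{Q}_{E_c}} \to \sh{\mc{Q}}$ induced by inclusion of a single generating object into the module category (this is \cref{base_change_one_object}, specialized in \cref{prop:main_prop} to monoids and $-\sma_C C_f$). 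Without that square, identifying the composite transfer-then-twist with $\tr\eqref{Reidemeister_map_of_modules}$ does not follow from the object-level Morita invariance you state. You should also note that the identification $\thh(\Sigma^\infty_+\Omega X; \Sigma^\infty_+ \Omega X_f) \simeq \Sigma^\infty_+ \mc{L} X^f$ is itself a twisted generalization of Goodwillie's computation and requires its own proof (this is \cref{cor:identify_thh_loop}); it is not a formal consequence of the shadow/Morita machinery.
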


In this statement $\operatorname{Mod}^c_A$ is the category of compact $A$ modules. The object $\thh(\operatorname{Mod}^c_A; F)$ is a twisted variant of $\thh$ (\cref{right_twisted}). 

The equivalence referenced in \cref{thm:lm_generalization} is induced by Morita equivalences, which are maps
\[
 \thh(A) \xto{\sim} \thh(\operatorname{Mod}^c_A).
\]
In this direction the map is not hard to define, but the homotopy inverse is far less obvious.  It would be desirable to know the inverse. We give a reasonably description of the inverse, and give a very explicit description on $\pi_0$.

For a ring spectrum $A$, an endomorphism $f\colon M \to M$ of a compact $A$-module spectrum determines a map 
$S\to \operatorname{End}(M) $.  Composing with the inclusion of the zero skeleton defines a map 
\[
  S\to \operatorname{End}(M) \to \thh(\operatorname{Mod}^c_A) 
\]
and so an element $[f] \in \pi_0 \thh(\operatorname{Mod}^c_A)$. This sets up the second main theorem of this paper. 

\begin{thm}\label{thm:what_happens_on_pi}
 The image of $[f] \in \pi_0 \thh(\operatorname{Mod}^c_A)$ under the Morita invariance isomorphism 
 \[
   \thh(\operatorname{Mod}^c_A) \simeq \thh(A)
  \]
  is the bicategorical trace of $f$. In particular, for a module $M \in \operatorname{Mod}^c_A$, the image of $[\operatorname{id}_M]$ is $\chi(M)$. 
\end{thm}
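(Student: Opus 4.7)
The plan is to use the shadow-bicategory formalism developed earlier in the paper, in which $\thh$ appears as the shadow of a suitable bicategory of bimodules. In this language, the bicategorical trace of an endomorphism $f\colon M \to M$ of a right-dualizable $A$-module is the composite
\[
  S \xto{\eta} M \wedge_A M^* \xto{f \wedge 1} M \wedge_A M^* \xto{\theta} M^* \wedge_A M \xto{\epsilon} \thh(A),
\]
where $\eta$, $\epsilon$ are the duality maps and $\theta$ is the shadow symmetry; specializing to $f = \operatorname{id}_M$ recovers $\chi(M)$ by definition, so the ``in particular'' clause follows from the main claim.

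Since the Morita equivalence is an isomorphism on $\pi_0$, it suffices (and is easier) to show that the explicit ``easy'' direction $\thh(A) \to \thh(\operatorname{Mod}^c_A)$ sends $\tr(f)$ to $[f]$. I would first identify this map as arising from a shadow-preserving functor between the bicategory of ring spectrum bimodules and the bicategory of spectral-category bimodules, sending $A$ to the spectral category $\operatorname{Mod}^c_A$. Because shadow functors carry bicategorical traces to bicategorical traces, this reduces the theorem to identifying the bicategorical trace of $f$, viewed now as an endomorphism of the 1-cell $M$ in the larger bicategory, with the concrete element $[f]$.

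Second, I would carry out this identification on $\pi_0$. In the spectral-category bicategory, the module $M$ is right-dualizable with evaluation realized by the action $M^* \wedge_A M \to \operatorname{End}(M)$; after applying the shadow and the symmetry $\theta$, the coevaluation $S \to M \wedge_A M^*$ composes with this evaluation to yield the unit map $S \to \operatorname{End}(M) \to \thh(\operatorname{Mod}^c_A)$. Inserting $f$ in the middle produces exactly the map defining $[f]$, and this identification is a combinatorial verification using the coherence axioms of a shadow.

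The main obstacle is this final step: confirming that the duality data for $M$ inside the spectral-category bicategory really does collapse to the endomorphism-algebra inclusion under the shadow. Once this is established, the commutative diagram witnessing trace-preservation of the Morita shadow functor immediately implies the theorem, and setting $f = \operatorname{id}_M$ yields $\chi(M) = \tr(\operatorname{id}_M)$.
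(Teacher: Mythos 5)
Your high-level plan is not unreasonable: since the two Morita maps are inverse isomorphisms, it is indeed enough to show that the ``easy'' map $\thh(A)\to\thh(\operatorname{Mod}^c_A)$ carries $\tr(f)$ to $[f]$, and specializing to $f=\operatorname{id}_M$ then gives the $\chi(M)$ statement. But as written the argument has two genuine gaps, and they sit exactly where the content of the theorem lies. First, your reduction runs through an unconstructed piece of machinery: a shadow-preserving pseudofunctor from the bicategory of ring spectra and bimodules to the bicategory of spectral categories sending $A$ to $\operatorname{Mod}^c_A$, together with the assertion that such functors preserve bicategorical traces and the identification of its effect on shadows with the specific Morita map in the theorem. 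None of this is established. Building it requires pseudofunctoriality of $M\mapsto$ the induced $(\operatorname{Mod}^c_S,\operatorname{Mod}^c_A)$-bimodule, shadow compatibility (which is twisted Morita invariance again), and a further comparison on the source $0$-cell, since your later computation uses $M$ as a $1$-cell out of the one-object category on $S$ while the functor would produce a $1$-cell out of $\operatorname{Mod}^c_S$. This package is comparable in difficulty to the theorem itself, and the paper deliberately avoids it: it stays inside one bicategory, observes that the inverse Morita map is itself an Euler characteristic $\chi_{((\operatorname{Mod}^c_A)_{E_A},\,{_{E_A}(\operatorname{Mod}^c_A)})}\bigl((\operatorname{Mod}^c_A)_{E_A}\bigr)$ (\cref{euler_char_shadows}), and then composes traces along composite dual pairs (\cref{lem:composite_dual_pair}, \cref{prop:base_change_euler_monoid}), so that \cref{thm:pi_0_THH} is a substitution into \cref{lem:base_change_euler_inverse}.

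Second, the step you yourself call ``the main obstacle''---that the duality data of $M$, viewed as a $1$-cell into $\operatorname{Mod}^c_A$, collapses under the shadow to the inclusion of the zero skeleton at $M$---is deferred rather than proved, and your description of that data is off. For the base-change dual pair $\bigl({_{I_M}(\operatorname{Mod}^c_A)},(\operatorname{Mod}^c_A)_{I_M}\bigr)$ the evaluation is a $2$-cell landing in the unit $1$-cell $U_{\operatorname{Mod}^c_A}$, i.e.\ the hom bimodule $\operatorname{Mod}^c_A(-,-)$, not a map $M^\ast\wedge_A M\to\End(M)$; only the coevaluation is the unit $S\to\End(M)$. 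The correct statement is that the \emph{trace} of this evaluation $2$-cell is the map $\End(M)\to\thh(\operatorname{Mod}^c_A)$ induced by inclusion of hom spectra, which is exactly \cref{lem:base_change_euler_ex} (together with \cref{rsm_simplified} and \cref{prop:tightening} to insert $f$), and feeding it through the Morita comparison is what \cref{prop:base_change_euler_monoid} and \cref{lem:base_change_euler_inverse} accomplish. So the proposal is a plausible reorganization, but both of its pillars---the shadow pseudofunctor with trace functoriality, and the collapse of the base-change duality data to the skeleton inclusion---are left unverified, and the second is precisely the computation the paper supplies.
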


The bicategorical trace is defined in \cref{defn:trace}.

As indicated above, all of these invariants are generalizations of the Euler characteristic and, less obviously, they share many formal properties. This observation provides a conceptually clean and very general approach to both \cref{thm:lm_generalization,thm:what_happens_on_pi}:  duality, shadows and traces in bicategories \cite{p:thesis,ps:bicat} exactly capture the relevant properties of the Euler characteristic and its generalizations.  Then these theorems are special cases of far more general results that are proven without any reference to a particular bicategory.

The relevant bicategorical theoretic machinery is developed or recalled in the body of the paper. The key foundational concepts are
\begin{itemize}
  \item base change objects (see \cref{def:base_change}). 
  \item the trace (see \cref{defn:trace})
  \item the Euler characteristic (see \cref{defn:euler_characteristic})
  \item Morita equivalences (see \cref{morita_equivalence})
\end{itemize}
Every theorem in this paper studies the interplay between some of these ingredients. For the convenience of the reader, we provide a concordance of these results in \cref{fig:concordance}, so that they may see the logical dependencies. The four boxed theorems at the top of the figure  are the results from which all of the results in this paper follow. The logical progression is one of gradual specialization --- the difficulty is in identifying the correct categorical context for proving the main results, not in the category theory itself. 
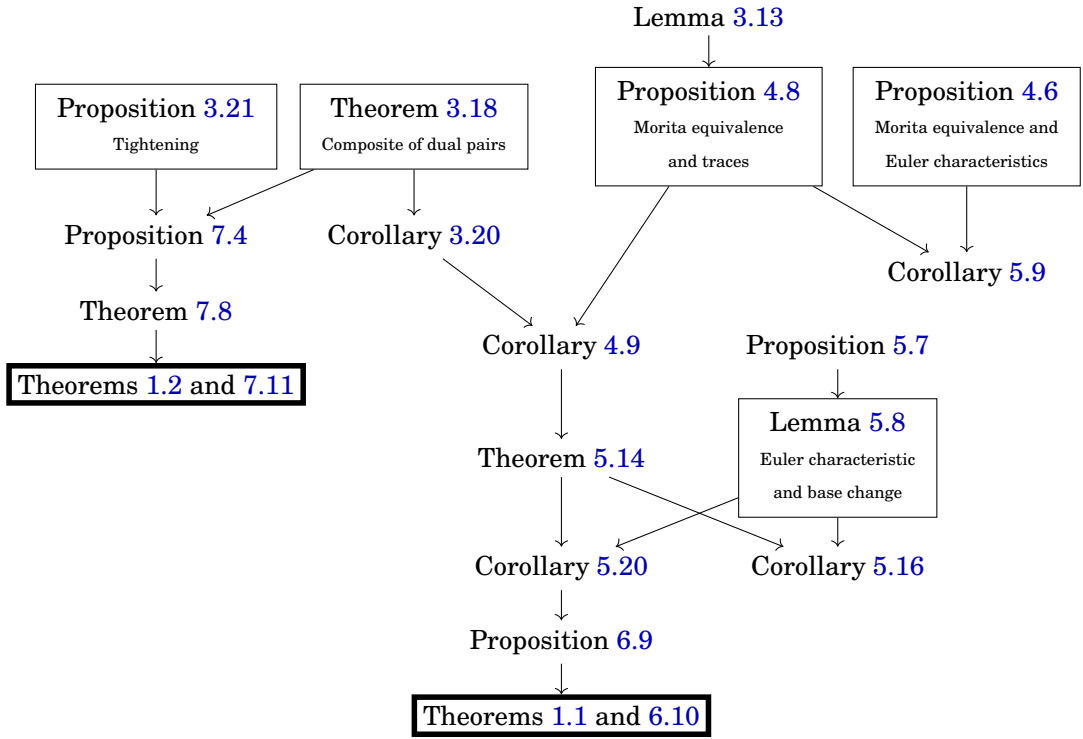
\begin{figure}
 \resizebox{\textwidth}{!}{
\begin{tikzpicture}
\node (313) at (-2.5,22.5) {\cref{rsm_simplified}};
\node (318) [rectangle, draw] at (-6.5,21) {\begin{tabular}{c}\cref{lem:composite_dual_pair}\\{\tiny Composite of dual pairs}\end{tabular}};
\node (320) at (-6.5,19.5) {\cref{euler_char_shadows_3}};
\node (46) [rectangle, draw] at (1,21) {\begin{tabular}{c}\cref{euler_char_shadows}\\{\tiny Morita equivalence and}\\{\tiny  Euler characteristics}\end{tabular}};
\node (48) [rectangle, draw] at (-2.5,21) {\begin{tabular}{c}\cref{euler_char_shadows_2}\\{\tiny Morita equivalence}\\{\tiny  and traces}\end{tabular}};
\node (49) at (-4.5,18) {\cref{euler_char_shadows_4}};
\node (56) at (-.75,18) {\cref{lem:base_change_euler}};
\node (57) [rectangle, draw]  at (-.75,16.5) {\begin{tabular}{c}\cref{lem:base_change_euler_ex}\\ {\tiny Euler characteristic}\\{\tiny  and base change}\end{tabular}};
\node (58) at (1,19) {\cref{cor:base_change_morita_isos}};
\node (514) at (-4.5,16.5) {\cref{base_change_one_object}};
\node (516) at (-.75,15) {\cref{cor:lind_malkiewich}};
\node (519) at (-4.5,15) {\cref{ex:base_change_and_natural_transformation}};
\node (610) at (-4.5,14) {\cref{prop:main_prop}};
\node (611) [rectangle, draw, line width =.75mm]  at (-4.5,13) {\cref{thm:main_theorem,thm:lm_generalization}};
\node (75) at (-10,19.5) {\cref{prop:base_change_euler_monoid}};

\node (79) at (-10,18.5) {\cref{lem:base_change_euler_inverse}};
\node (711) [rectangle, draw, line width =.75mm] at (-10,17.5) {\cref{thm:pi_0_THH,thm:what_happens_on_pi}};

\node (321) [rectangle, draw] at (-10,21) {\begin{tabular}{c} \cref{prop:tightening}\\{\tiny Tightening}\end{tabular}};

\draw [->](57)--(519);
\draw [->](318)--(320);
\draw [->](313)--(48);
\draw [->](320)--(49);
\draw [->](48)--(49);
\draw [->](56)--(57);
\draw [->](46)--(58);
\draw [->](48)--(58);
\draw [->](49)--(514);
\draw [->](57)--(516);
\draw [->](514)--(516);
\draw [->](514)--(519);
\draw [->](519)--(610);
\draw [->](610)--(611);
\draw [->](318)--(75);
\draw [->](75)--(79);
\draw [->](79)--(711);
\draw [->](321)--(75);
\end{tikzpicture}
}
\caption{Concordance of results}\label{fig:concordance}
\end{figure}

\subsection{Outline}

In \cref{sect:thh_spectral} we establish some results about the bicategory of spectral categories, and how $\thh$ behaves on these. We prove that $\thh$ is a shadow in the sense of \cite{p:thesis}. In \cref{sect:duality_and_trace} we define traces and Euler characteristics in bicategories. In the bicategorical context, the Euler characteristic is an invariant of 1-cells, and we establish a number of results about the composition of these characteristics. 

\cref{morita_equivalence} and \cref{sect:euler_char} are devoted to the properties of traces and Euler characteristics under Morita invariance. These are the technical core of the paper, and every result needed to address the main example is treated in great generality in these sections. \cref{morita_equivalence} addresses how traces behave with respect to Morita invariance, while \cref{sect:euler_char} discusses how bicategorical Euler characteristics behave under certain base change maps. 

Having related shadows and $\thh$, in \cref{sect:transfer} we relate the transfer in $\thh$ to the Reidemeister trace. This is achieved by observing that transfers in $\thh$ are nothing more than an example of base change. The results from \cref{morita_equivalence} and \cref{sect:euler_char} then allow us to very explicitly identify certain transfers.

In \cref{sect:cyclotomic} we show that the ``inclusion of objects'' map on $\thh$ is exactly computed by the bicategorical trace, finally relating the two notions of trace that arise in the literature. 

A crucial, but lengthy, computation is relegated to the appendix.

\subsection{Bicategories and Notation}

Here we set our definitions and notations for bicategories. For much more thorough treatments see \cite{benabou,leinster}. 
A {\bf bicategory} $\mc{B}$ consists of objects, $A, B, \ldots$, called $0$-cells,  and categories $\mc{B}(A, B)$ for each pair of objects $A, B$. Objects in the category $\mc{B}(A, B)$ are called 1-cells and morphisms are called 2-cells.  The unit $1$-cell associated to a 0-cell $A$ is denoted $U_A$.
There are horizontal composition functors
\[
\odot \colon  \mc{B}(A, B) \times \mc{B}(B, C) \to \mc{B}(A, C). 
\]
They need not be strictly associative or unital.

The most illuminating examples of bicategories for this paper are: 
\begin{itemize}
  \item The bicategory whose 0-cells are rings and, for rings $A$ and $B$, $\mc{B}(A,B)$ is the category of $(A,B)$-bimodules.  The horizontal composition is the tensor product.
  \item The bicategory whose 0-cells are spaces and, for spaces $A$ and $B$, $\mc{B}(A,B)$ is the category of spaces over $A\times B$.  The  horizontal composition is the pullback 
along the diagonal.  This bicategory also has a stable version \cite{maysig:pht}.
\end{itemize}

\subsection{Acknowledgments} 

This paper should be regarded as a step in manifesting a  perspective linking fixed point theory, $K$-theory, and topological Hochschild homology that has long been known to experts like Randy McCarthy, John Klein, and Bruce Williams. Parts of this perspective have appeared explicitly in the unpublished thesis of Iwashita \cite{iwashita}. 

Campbell thanks Randy McCarthy for a useful conversation about $K$-theory and fixed point theory. He also thanks Ralph Cohen for teaching him the ubiquity and utility of the free loop space. 
Ponto was partially supported by a Simons Collaborations Grant.

We are especially appreciative of the efforts of a very helpful referee who insights significantly improved this paper. 

\section{$\thh$ for Spectral Categories and Shadows}\label{sect:thh_spectral}

In this section we define topological Hochschild homology and review the properties of spectral categories that are useful for our main applications. We show that THH, as an invariant of spectrally enriched categories, is a 
\textit{shadow} in the sense of \cite{p:thesis}. 
This allows us to work in the generality of bicategories, easing and clarifying proofs and simplifying later work.  

As we will make clear, the natural home for $\thh$ is the bicategory of spectral categories. The other familiar property of $\thh$, Morita invariance, is a consequence of this structure. We emphasize that for proving general theorems about $\operatorname{THH}$ almost no other structure is used except for that provided by shadows. There is precedent for this viewpoint in the literature. In \cite{blumberg_mandell} the authors essentially manipulate $\operatorname{THH}$ as a shadow. As another example, in order to explore formal properties of Hochschild homology of DG-categories, Kaledin in \cite{kaledin} defines ``trace functors'' and then notes that they are similar to the second-named author's shadows. From this perspective, there is in some sense nothing ``special'' about $\thh$. Of course, its main property is that it receives a map from algebraic $K$-theory, but we are not yet using that structure.

Topological Hochschild homology is defined at varying levels of generality: it can be defined for ring spectra \cite{bokstedt}, rings with a bimodule coordinate \cite{dundas_mccarthy}, spectral categories and spectral categories with a bimodule coordinate \cite{blumberg_mandell}. For the moment, we work in the generality of spectral categories and bimodules. We begin by considering spectral categories enriched in either symmetric or orthogonal spectra \cite{mandell_may_shipley_schwede}.

\begin{defn} 
	A spectral category $\mc{C}$ is \textbf{pointwise cofibrant} if each mapping spectrum $\mc{C}(a, b)$ is cofibrant in the enriching category.
\end{defn}

\begin{defn}
	Let $\mc{C}$ be a spectral category. Then a $\mc{C}$\textbf{-module} is a spectral functor $\mc{C} \to \Sp$. 
\end{defn}

\begin{defn}
  A \textbf{$(\mc{C}, \mc{D})$-bimodule} is a functor $\mc{M}\colon  \mc{C}^{\text{op}} \sma \mc{D} \to \Sp$. That is, $\mc{M}$ is a collection of spectra $\mc{M}(c, d)$ for $c \in \mc{C}$, $d \in \mc{D}$ together with maps
  \begin{align*}
    &\mc{C}(c, c') \sma \mc{M}(c', d) \to \mc{M}(c, d)\\
    &\mc{M}(c, d) \sma \mc{D}(d,d') \to \mc{M}(c, d') 
  \end{align*}
  An $(\mc{C}, \mc{D})$-bimodule is {\bf pointwise cofibrant} if $\mc{M}(c, d)$ is cofibrant. A {\bf morphism} of $(\mc{C}, \mc{D})$-bimodules $\mc{M} \to \mc{M}'$ is a collection of maps $\mc{M}(c, d) \to \mc{M}'(c, d)$ which commute with the appropriate structure. 
\end{defn}

\begin{rmk}
	Note that this has the opposite variance of what is standard for bimodules in the literature. This convention seems to be more useful for bookkeeping for us. 
\end{rmk}

\begin{defn}
  Let $\mc{C}$ be a pointwise cofibrant spectral category, and $\mc{Q}$ a $(\mc{C},\mc{C})$-bimodule. The \textbf{topological Hochschild homology of $\mc{C}$ with coefficients in $\mc{Q}$} is the geometric realization of a spectrum whose $n$th simplicial level is
  \[
  \thh(\mc{C}; \mc{Q})_n \coloneqq N^{\text{cy}}_n (\mc{C}, \mc{Q}) = \bigvee_{c_0, \dots, c_n} \mc{C}(c_0, c_1) \sma \mc{C}(c_1, c_2) \sma \cdots \sma \mc{C}(c_{n-1}, c_n) \sma \mc{Q}(c_n, c_0)
  \]
  That is, 
  \[
  \thh(\mc{C};\mc{Q}) \coloneqq \vert\thh(\mc{C};\mc{Q})_\bullet\vert
  \]
\end{defn}

\begin{rmk}
	The reader is warned that most literature makes a distinction between B\"{o}kstedt's construction $\thh(\mc{C};\mc{Q})$ and $N^{\text{cy}}(\mc{C};\mc{Q})$. When $\mc{C}$ is pointwise cofibrant, they are equivalent.  Since we will work on the level of homotopy categories, and ignore questions of equivariance, we therefore ignore the distinction. 
\end{rmk}

Spectral bimodules may be manufactured from functors. This is an example of what we later call \textit{base change}. 

\begin{defn}\label{def:base_change}
  Let $F\colon  \mc{A} \to \mc{C}$ and  $G\colon  \mc{B} \to \mc{C}$ be functors between spectral categories. Define an $(\mc{A}, \mc{B})$-bimodule $ \ _F \mc{C}_G$ as follows. For objects $a\in \mc{A}$ and $b \in \mc{B}$
  \[
  \ _F \mc{C}_G (a, b) \coloneqq \mc{C}(F(a), G(b)).  
  \]
 The right action of $\mc{B}$ on $\ _F \mc{C}_G$ is given by the functor $G$:
  \begin{align*}
  \ _F \mc{C}_G(a, b) \sma \mc{B} (b, b') &= \mc{C}(F(a),G(b)) \sma \mc{B} (b, b') \to \mc{C}(F(a), G(b)) \sma \mc{C}(G(b), G(b'))\\ &\to \mc{C} (F(a), G(b')) = \ _F \mc{C}_G (a, b') 
  \end{align*}  The left action of $\mc{A}$ is similar.
\end{defn}

\begin{example}
	When $F\colon  \mc{C} \to \mc{C}$ is an endofunctor and $G = \id$, we can form $\ _F \mc{C}$, and similarly $\mc{C}_F$. 
\end{example}

\begin{defn}\label{right_twisted}
  Let $\mc{C}$ be a pointwise cofibrant spectral category and $F\colon  \mc{C} \to \mc{C}$ be an endofunctor. We defined the \textbf{right twisted topological Hochschild homology} to be
  \[
  \thh(\mc{C}; F)\coloneqq\thh(\mc{C}; \mc{C}_F) 
  \]
\end{defn}

\begin{rmk}
When $F = \id$, we recover $\thh(\mc{C})$. 
\end{rmk}

\begin{example}\label{eg:twisted_thh}
A good example to keep in mind is the following. Let $A$ be a commutative ring spectrum,  $P$ be an $A$-module, and let $\operatorname{Mod}^c_A$ denote the category of compact $A$-modules. Consider the functor $-\wedge_AP \colon  \operatorname{Mod}^c_A \to \operatorname{Mod}^c_A$ given by $M \mapsto M \sma_A P$, where $M \in \operatorname{Mod}^c_A$. We show  in \cref{ex:twist_thh_morita} that the twisted $\thh(\operatorname{Mod}^c_A; -\wedge _AP)$ coincides with $\thh(A; P)$.
\end{example}

Topological Hochschild homology is clearly functorial in the bimodule coordinate so that given a map of $(\mc{C}, \mc{C})$-bimodules $\mc{Q} \to \mc{Q}'$  there is a map
\[
\thh(\mc{C}; \mc{Q}) \to \thh(\mc{C}; \mc{Q}').  
\]
Furthermore, if $\mc{A} \to \mc{C}$ is a map, then we get an induced map $\thh(\mc{A}) \to \thh(\mc{C})$. There is also a refinement of both \cite{blumberg_mandell}. Let $F\colon  \mc{A} \to \mc{C}$ be a map of spectral categories and let $\mc{Q}$ be a $(\mc{C},\mc{C})$-bimodule. Then there is a map
\[
\thh(\mc{A};{_F\mc{Q}_F}) \to \thh(\mc{C}, \mc{Q}) 
\]
and if there is a map $\mc{P} \to {_F \mc{Q}_F}$ we obtain
\[
\thh(\mc{A}; \mc{P}) \to \thh (\mc{C}, \mc{Q}). 
\]

We now describe the bicategory structure on the category of spectral categories. First, we note some homotopical properties of spectral categories. In the sequel, we work with a bicategory enriched in various homotopy categories; the following remarks establish that we may do this.

To begin, we have the following rephrasing of \cite[Prop.~6.1]{schwede_shipley} found in \cite[Prop.~2.4]{blumberg_mandell}.

\begin{prop}
The category, $\operatorname{Mod}_{(\mc{C}, \mc{D})}$ of $(\mc{C}, \mc{D})$-bimodules forms a closed model category with object-wise weak equivalences.
\end{prop}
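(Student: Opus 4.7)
The plan is to recognize this as a standard instance of the Schwede--Shipley machinery for transferring cofibrantly generated model structures along adjunctions, adapted to the enriched setting over spectra.

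First, I would reduce to the one-sided case by observing that a $(\mc{C}, \mc{D})$-bimodule is precisely a spectral functor $\mc{C}^{\text{op}} \sma \mc{D} \to \Sp$, i.e.\ a module over the spectral category $\mc{E} \coloneqq \mc{C}^{\text{op}} \sma \mc{D}$. So it suffices to put a projective model structure on the category of modules over a single spectral category $\mc{E}$, in which weak equivalences and fibrations are defined objectwise in $\Sp$.

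Next, I would write down the free--forgetful adjunctions that drive the transfer. For each object $e$ of $\mc{E}$, evaluation $\operatorname{ev}_e \colon \operatorname{Fun}(\mc{E}, \Sp) \to \Sp$ admits a left adjoint $F_e$ given by $F_e(K)(e') = K \sma \mc{E}(e, e')$. The candidate generating cofibrations (resp.\ trivial cofibrations) are $\{F_e(i) \mid e \in \mc{E},\, i \in I\}$ (resp.\ $\{F_e(j) \mid e \in \mc{E},\, j \in J\}$), where $I$ and $J$ are the generating sets for some well-behaved stable model structure on $\Sp$ (e.g.\ the positive flat stable structure on symmetric or orthogonal spectra). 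One then applies Kan's recognition theorem. Smallness of the domains is inherited from smallness in $\Sp$ because each $F_e$ is a left adjoint; the two-out-of-three and retract axioms for objectwise weak equivalences are immediate; and the crucial input is that every relative $\{F_e(j)\}$-cell complex is an objectwise trivial cofibration. The small object argument then produces the required functorial factorizations.

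The genuine content, and the main obstacle, is the verification of this last cell-complex statement, which amounts to the pushout-product / monoid axiom for the smash product on $\Sp$ together with the fact that $F_e(j)$ is built by smashing $j$ with the mapping spectra $\mc{E}(e, e')$. Choosing $\Sp$ to be a model of spectra with a sufficiently flexible smash product (so that smashing with any spectrum preserves trivial cofibrations, or one works in the positive flat setting) handles this cleanly; this is exactly the point where the $\mc{E}(e,e')$ must interact well with cofibrations, and where Schwede--Shipley's original argument goes through verbatim. Once this input is in place, the model structure on $\operatorname{Mod}_{(\mc{C}, \mc{D})}$ drops out of Kan's theorem, and all further properties (cofibrant generation, properness, enrichment in $\Sp$) are formal consequences.
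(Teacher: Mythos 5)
Your argument is correct and is essentially the content behind the citations the paper uses here: the paper simply cites this result as a rephrasing of Schwede--Shipley (Prop.~6.1) via Blumberg--Mandell (Prop.~2.4), and your proposal reconstructs exactly the Schwede--Shipley transfer argument — reduce to modules over $\mc{C}^{\text{op}}\sma\mc{D}$, transfer along the free--forgetful adjunctions at each object, and verify the acyclicity condition via the monoid axiom in a suitable model of spectra. So this is the same route, just spelled out rather than cited.
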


For any small spectral category $\mc{C}$, we have the following rephrasing of \cite[Prop.~6.3]{schwede_shipley} due to \cite[Prop.~2.7, Prop.~2.8]{blumberg_mandell}.

\begin{prop}
	Given a small spectral category $\mc{C}$ there is an endofunctor $Q\colon  \mc{C}\text{at}_{Sp} \to \mc{C}\text{at}_{Sp}$ such that $Q \mc{C}$ is pointwise cofibrant and there is a map $Q \mc{C} \to \mc{C}$ that is a pointwise weak equivalence. Furthermore, if $\mc{M}$ is a cofibrant $(\mc{C}, \mc{D})$-module, then $\mc{M}$ is pointwise cofibrant. 
\end{prop}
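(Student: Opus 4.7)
The plan is essentially to invoke the cited model-categorical results and assemble them. First, I would appeal to the Schwede--Shipley model structure on $\mc{C}\text{at}_{Sp}$ to produce a functorial cofibrant replacement $Q\colon \mc{C}\text{at}_{Sp} \to \mc{C}\text{at}_{Sp}$ obtained from the small object argument applied to the generating cofibrations. This yields, for any small spectral category $\mc{C}$, a factorization of the initial map $\emptyset \to \mc{C}$ as a cofibration $\emptyset \to Q\mc{C}$ followed by an acyclic fibration $Q\mc{C} \to \mc{C}$. Since acyclic fibrations in this model structure are in particular pointwise weak equivalences, this supplies the required comparison map $Q\mc{C} \to \mc{C}$, and functoriality of the small object argument makes $Q$ into an endofunctor.

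The second step is to verify that $Q\mc{C}$ is pointwise cofibrant. This is not automatic from cofibrancy in the model structure and requires unwinding the generating cofibrations: these are free spectral categories built from generating cofibrations of (symmetric or orthogonal) spectra, and cellular attachments along them preserve pointwise cofibrancy of mapping spectra because the enriching category is a monoidal model category satisfying the pushout-product axiom. This is the content of \cite{blumberg_mandell}, Proposition 2.7, which I would invoke rather than reprove.

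For the bimodule statement I would repeat this analysis in the model structure on $\operatorname{Mod}_{(\mc{C},\mc{D})}$ from the preceding proposition. The generating cofibrations there are constructed from represented bimodules of the form $\mc{C}(-,c) \sma \mc{D}(d,-)$ smashed with generating cofibrations of spectra, and an induction on cellular attachments along these shows that every cofibrant bimodule is pointwise cofibrant --- the key point being that smashing with a cofibration of spectra preserves cofibrancy of each bimodule value, and that the represented bimodules are themselves pointwise cofibrant. This is the assertion of \cite{blumberg_mandell}, Proposition 2.8.

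The main obstacle, were one to prove this from scratch, would be verifying that the small object argument interacts well with pointwise cofibrancy; this in turn depends on the monoidal model structure on symmetric or orthogonal spectra satisfying the pushout-product axiom, so that cellular attachments along the generating cofibrations preserve pointwise cofibrant mapping spectra. Since these verifications are carried out in the cited references, the proof essentially amounts to identifying the correct citations in \cite{schwede_shipley} and \cite{blumberg_mandell} and translating them to the bimodule conventions adopted here.
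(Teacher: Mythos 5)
Your proposal takes essentially the same route as the paper: the paper gives no argument of its own, instead presenting the proposition as a rephrasing of \cite[Prop.~6.3]{schwede_shipley} via \cite[Prop.~2.7, Prop.~2.8]{blumberg_mandell}, and you likewise defer the substantive verifications to those same propositions. One small imprecision worth flagging: there is not a Schwede--Shipley model structure on $\mc{C}\text{at}_{Sp}$ itself in the way your first paragraph suggests; the cofibrant replacement of \cite[Prop.~2.7]{blumberg_mandell} is built from the Schwede--Shipley model structure on spectral categories \emph{with a fixed object set} $\mathcal{O}$, assembled over varying $\mathcal{O}$ into an endofunctor of $\mc{C}\text{at}_{Sp}$ --- but since your argument ultimately rests on the cited proposition rather than on the framing, the proof still goes through.
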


Furthermore, by the remark following \cite[Prop.~3.6]{blumberg_mandell}, if $\mc{C}$ is pointwise cofibrant, and $\mc{P} \to \mc{P}'$ is a weak equivalence of spectral categories,
 then the induced map 
 \[
	\thh(\mc{C}, \mc{P}) \to \thh (\mc{C}, \mc{P}')
 \]
is a weak equivalence.  Thus, for instance, if $Q\mc{P} \to \mc{P}$ is a cofibrant replacement of $\mc{P}$, $\thh(\mc{C}, Q\mc{P}) \to \thh(\mc{C}, \mc{P})$ is a weak equivalence.

These propositions imply that can move  between models and replace bimodules by weakly equivalent ones at will. Given this, we work on the level of homotopy categories.

\begin{defn}
  The bicategory of small spectral categories is the bicategory whose objects are pointwise cofibrant small spectral categories, and whose morphism categories are 
\[
 \operatorname{Ho}\left(\operatorname{Mod}_{(\mc{C}, \mc{D})}\right)
\]
 for pointwise cofibrant small spectral categories $\mc{C}$ and $\mc{D}$.
\end{defn}

\begin{rmk} 
  The composition of 1-cells is defined as follows. Let $\mc{M}$ be an $(\mc{C}, \mc{D})$-bimodule and $\mc{N}$ an $(\mc{D},\mc{E})$-bimodule. Then we may form an $(\mc{C}, \mc{E})$-bimodule $\mc{M} \odot \mc{N}$
  \begin{align*}
    (\mc{M} \odot \mc{N})(c,e) &\coloneqq \mc{M}(c,-) \sma^{\mbf{L}}_{\mc{D}} \mc{N}(-,e)\\
    &\coloneqq B(\mc{M}(c,-), \mc{D}, \mc{N}(-,e))
  \end{align*}
  where $B(-,-,-)$ denotes the two-sided bar construction. 
  
  This descends to
  \[
  \operatorname{Ho}\left(\Mod_{(\mc{C}, \mc{D})}\right) \times \operatorname{Ho}\left(\Mod_{(\mc{D}, \mc{E})}\right) \to \operatorname{Ho}\left(\Mod_{(\mc{C}, \mc{E})}\right)
  \]
  Checking that this is associative is straightforward but tedious. One explicitly writes out the bar construction and cofibrantly replaces as needed.  The composition of 2-cells is the composition of natural transformations. 
\end{rmk} 

As a cyclic bar construction, $\thh$ has cyclic invariance built into it.  This cyclic invariance is also present in Hochschild homology and is an essential part of  the Hattori-Stallings  trace 
\[
K_0(A)\to HH_0(A).
\] 
There is a general categorical setup due to the second named author \cite{p:thesis} that encodes exactly the kind of properties that $\thh$ enjoys as a functor of spectral categories.

\begin{defn}[\cite{p:thesis}]\label{def:shadow}
  Let $\mc{B}$ be a bicategory.  A \textbf{shadow functor} for $\mc{B}$
  consists of functors
  \[
   \sh{-}\colon  \mc{B}(C,C) \to \mbf{T}
  \]
  for each object $C$ of $\mc{B}$ and some fixed category $\mbf{T}$, equipped
  with a natural isomorphism
  \[
   \theta\colon \sh{M\odot N}\xto{\cong} \sh{N\odot M}
  \]
  for $M\in \mc{B}(C, D)$ and $N\in \mc{B}(D, C)$ such that the following
  diagrams commute whenever they make sense:
  \[\xymatrix{
    \sh{(M\odot N)\odot P} \ar[r]^\theta \ar[d]_{\sh{a}} &
    \sh{P \odot (M\odot N)} \ar[r]^{\sh{a}} &
    \sh{(P\odot M) \odot N}
    \\
    \sh{M\odot (N\odot P)} \ar[r]^\theta 
    & \sh{(N\odot P)\odot M} \ar[r]^{\sh{a}} 
   & \sh{N\odot (P\odot M)}\ar[u]_\theta 
  }\]
  \[\xymatrix{
    \sh{M\odot U_C} \ar[r]^\theta \ar[dr]_{\sh{r}} &
    \sh{U_C\odot M} \ar[d]^{\sh{l}} \ar[r]^\theta &
    \sh{M\odot U_C} \ar[dl]^{\sh{r}}
    \\
    &\sh{M}}\]
\end{defn}

Note that   if $\sh{-}$ is a shadow functor on $\mc{B}$, then the composite
  \[\xymatrix{
    \sh{M\odot N} \ar[r]^\theta &
    \sh{N\odot M} \ar[r]^\theta &
    \sh{M\odot N}
   }\]
  is the identity \cite[Prop.~4.3]{ps:bicat}.

\begin{thm}\label{thm:thh_shadow}
  Topological Hochschild Homology is a shadow. That is, it gives a family of functors
  \[
  \thh (-) \colon  \operatorname{Ho}\left(\operatorname{Mod}_{(\mc{C}, \mc{C})}\right) \to \operatorname{Ho}(\operatorname{Sp}) 
  \]
  that satisfy the required properties. 
\end{thm}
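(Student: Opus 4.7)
The plan is to produce the cyclic symmetry isomorphism $\theta$ directly from the simplicial description of $N^{\text{cy}}$, and then verify the coherence diagrams by enlarging the same picture.

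First, I would build $\theta$. Given a $(\mc{C},\mc{D})$-bimodule $\mc{M}$ and a $(\mc{D},\mc{C})$-bimodule $\mc{N}$, model the derived tensor $\mc{M}\odot_{\mc{D}}\mc{N}$ by the two-sided bar construction $B(\mc{M},\mc{D},\mc{N})$, and substitute it into $\thh(\mc{C};-)$. This produces a bisimplicial spectrum whose $(p,q)$-simplices are
\[
\bigvee_{c_0,\ldots,c_p,\,d_0,\ldots,d_q}\!\!\mc{C}(c_0,c_1)\sma\cdots\sma\mc{C}(c_{p-1},c_p)\sma\mc{M}(c_p,d_0)\sma\mc{D}(d_0,d_1)\sma\cdots\sma\mc{D}(d_{q-1},d_q)\sma\mc{N}(d_q,c_0).
\]
Performing the analogous expansion for $\thh(\mc{D};B(\mc{N},\mc{C},\mc{M}))$ produces a bisimplicial spectrum in variables $(q,p)$ whose simplices are the same wedge, read after cyclic rotation of the tensor factors. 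Swapping the two simplicial directions and relabeling indices identifies the two bisimplicial spectra; since geometric realization commutes with this swap, we get the required isomorphism $\theta$ on realizations. Naturality in $\mc{M}$ and $\mc{N}$ is immediate from naturality of $N^{\text{cy}}$ and of the bar construction.

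Next I would verify that $\theta$ descends to $\operatorname{Ho}(\Sp)$. By the propositions cited before the theorem, pointwise cofibrancy of $\mc{C}$ and $\mc{D}$ together with cofibrancy of the bimodule coordinates make both $B(\mc{M},\mc{D},\mc{N})$ and $B(\mc{N},\mc{C},\mc{M})$ model the derived tensor, and the levelwise weak equivalences needed for invariance hold. The cofibrant-replacement endofunctor $Q$ then lets us evaluate $\theta$ on arbitrary homotopy classes, and independence of the choice of representatives is again an application of the cited homotopy invariance of $\thh$ in the bimodule variable.

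Finally, for the coherence hexagon and the unit triangle, I would enlarge the picture to triples. Given composable 1-cells $M$, $N$, $P$, apply the bar construction successively to model the two composites appearing in the hexagon, obtaining a \emph{trisimplicial} spectrum whose simplices interleave strings in $\mc{C}$, $\mc{D}$, $\mc{E}$ with the bimodule factors $M$, $N$, $P$. Both routes around the hexagon are realizations of this trisimplicial spectrum after relabeling, and the two applications of $\theta$ compose to the cyclic rotation by a third of the factors; the triangle commutes because this rotation is the identity. For the unit coherence, the horizontal composite with $U_{\mc{C}}$ is tensoring over $\mc{C}$, and under the identification above this simply expresses that the two applications of $\theta$ agree with the unitors $l$, $r$ pulled through the shadow. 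The hardest part is bookkeeping: writing down a careful single bisimplicial (resp.\ trisimplicial) replacement whose symmetries manifestly encode both $\theta$ and the coherence data, so that the diagrams reduce to statements about cyclic permutations of indexing tuples rather than to genuine calculations in $\Sp$.
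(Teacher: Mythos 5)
Your proposal is correct and follows essentially the same route as the paper: the isomorphism $\theta$ is the point-set cyclic-rotation/bisimplicial-swap identification of $\thh(\mc{C};B(\mc{M},\mc{D},\mc{N}))$ with $\thh(\mc{D};B(\mc{N},\mc{C},\mc{M}))$, which is exactly the Dennis--Morita--Waldhausen argument the paper cites from Blumberg--Mandell, with the coherence diagrams handled by the same enlarged (multi)simplicial bookkeeping and the homotopical issues by the cofibrancy statements quoted before the theorem. The only difference is that you spell out the cited computation rather than invoking it.
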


\begin{proof}

  The main property of shadows is that for $(\mc{C}, \mc{D})$-bimodule $\mc{M}$ and a $(\mc{D}, \mc{C})$-bimodule $\mc{N}$, there is an isomorphism
  \[
  \theta \colon  \sh{\mc{M} \odot \mc{N}}  \to \sh{\mc{N} \odot \mc{M}}. 
  \]
  Unpacking this into the usual notation, this is equivalent to the demand that there is an isomorphism 
  \[
  \theta\colon  \thh (\mc{C}, B(\mc{M}, \mc{D}, \mc{N})) \to \thh(\mc{D}, B(\mc{N}, \mc{C}, \mc{M})) 
  \]
  However, this is the classical Dennis-Morita-Waldhausen argument \cite[Prop.~6.2]{blumberg_mandell} --- in this case, there is an isomorphism of underlying point-set spectra.

  The commutativity of the rest of the diagrams follow from essentially the same argument. 
\end{proof}

\section{Duality and  trace}\label{sect:duality_and_trace}

In the previous section, we showed that $\thh$ is an example of a shadow on a bicategory. This is quite a general notion, and many bicategories possess shadows. In addition, in any bicategory with shadow, one can define a notion of trace, which one can think of as a vast generalization of the trace of an endomorphism in a symmetric monoidal category. In this section we recall the definitions required to define a trace and collect the results about the trace that we will need below. It is at this point that we begin to work in bicategorical generality. 

As a starting point it is useful to have a few bicategories in mind. 
The following two examples are very important for our intended applications.  Let $\mc{V}$ be a symmetric monoidal category. (In what follows, one can imagine that $\mc{V}$ is the category of spectra.) 
\begin{enumerate}
  \item   Let $\mc{B}(\mathbf{Mon}(\mc{V}))$ be the bicategory whose  objects are monoids in $\mc{V}$, 1-cells are bimodules over monoids in $\mc{V}$ and whose 2-cells are maps of bimodules.
  \item  Let $\mc{B}(\mathbf{Cat}(\mc{V}))$ be the bicategory whose objects are categories enriched in $\mc{V}$, whose 1-cells are functors $F \colon  \mc{C}^{\text{op}} \otimes \mc{D} \to \mc{V}$ (also known as $(\mc{C}, \mc{D})$-bimodules) and whose 2-cells are natural transformations of such. 
\end{enumerate}
These bicategories both have shadows that take values in $\mc{V}$.

\begin{example}\label{bcat_example}
Let $\mc{V} = \mbf{Sp}$. Then $\mc{B}(\mathbf{Cat}(\mc{V}))$ is the bicategory of spectral categories. Because of the homotopical issues outlined above we work in the full subcategory of pointwise cofibrant small spectral categories. In general, if $\mc{V}$ and $\mbf{Cat}(\mc{V})$ have some kind of homotopical structure, we understand $\mc{B}(\mbf{Cat}(\mc{V}))$ to be modified in order to give the homotopically correct definitions. 
\end{example}

\begin{rmk}
  Note that every monoid in $\mc{V}$ can be made into a $\mc{V}$-category with one object, giving an embedding $\mbf{Mon}(\mc{V}) \to \mbf{Cat}(\mc{V})$; this is the enriched version of the usual embedding $\mbf{Mon} \to \mbf{Cat}$. Thus, at the level of bicategories, we have an embedding
  \[
  \mc{B}(\mbf{Mon}(\mc{V})) \to \mc{B}(\mbf{Cat}(\mc{V})).  
  \]
\end{rmk}

The following definition is at the core of all of the constructions in this paper. 

\begin{defn}We say that a 1-cell $M\in \mc{B}(C,D)$ in a bicategory is \textbf{right dualizable} if there is a 1-cell $N\in \mc{B}(D,C)$, called its \textbf{right dual}, and coevaluation and evaluation 2-cells $\eta\colon U_C \to M\odot N$ and $\epsilon \colon N\odot M\to U_D$ satisfying the triangle identities.
We  say that $(M,N)$ is a \textbf{dual pair}, that $N$ is \textbf{left dualizable}, and that $M$ is its \textbf{left dual}.
\end{defn}

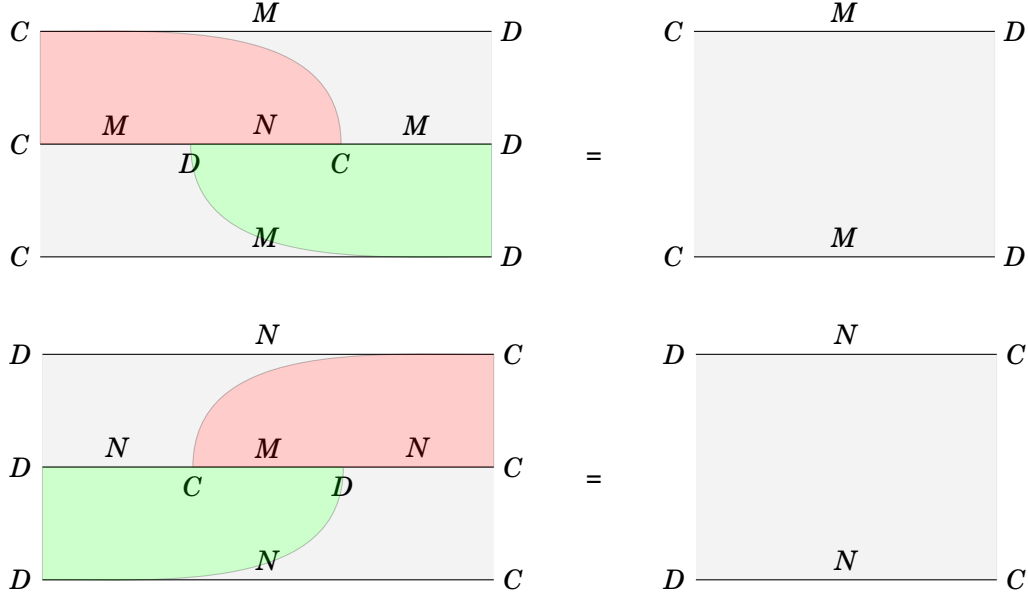
\begin{figure}
\begin{tikzpicture}
\coordinate (A1) at (0,3);
\node at (A1)  [left]{$C$};
\coordinate (B1) at (6,3);
\node at (B1) [right] {$D$};
\coordinate (A2) at (0,1.5);
\node at (A2) [left] {$C$};
\coordinate (B2) at (2,1.5);
\node at (B2) [below] {$D$};
\coordinate (A3) at (4,1.5);
\node at (A3)  [below]{$C$};
\coordinate (B3) at (6,1.5);
\node at (B3) [right] {$D$};
\coordinate (A4) at (0,0);
\node at (A4)[left] {$C$};
\coordinate (B4) at (6,0);
\node at (B4) [right] {$D$};

\draw (A1)--node[midway, above] {$M$}(B1);
\draw (A2)--node[midway, above] {$M$}(B2);
\draw (A3)--node[midway, above] {$M$}(B3);
\draw (A4)--node[midway, above] {$M$}(B4);
\draw (B2)--node[midway, above] {$N$}(A3);
\draw[fill=red, opacity=0.2] (A1)to [out = 0, in = 90] (A3)--(A2)--(A1);
\draw[fill=green, opacity=0.2] (B2)--(B3)--(B4)to [out =180, in =-90](B2);

\draw[fill=black, opacity=0.05] (A1)--(B1)--(B3)--(A3)to[out =90, in =0](A1);
\draw[fill=black, opacity=0.05]  (A2)--(B2) to [out = -90, in = 180] (B4)--(A4);
\end{tikzpicture}
\hspace{.5cm}
\raisebox{1.5cm}{=}
\hspace{.5cm}
\begin{tikzpicture}
\coordinate (A1) at (0,3);
\node at (A1)  [left]{$C$};
\coordinate (B1) at (4,3);
\node at (B1) [right] {$D$};
\coordinate (A4) at (0,0);
\node at (A4)[left] {$C$};
\coordinate (B4) at (4,0);
\node at (B4) [right] {$D$};

\draw (A1)--node[midway, above] {$M$}(B1);
\draw (A4)--node[midway, above] {$M$}(B4);
\draw[fill=black, opacity=0.05]  (A1)--(B1)--(B4)--(A4);
\end{tikzpicture}

\vspace{.5cm}
\begin{tikzpicture}
\coordinate (B1) at (0,3);
\node at (B1) [left] {$D$};
\coordinate (A1) at (6,3);
\node at (A1)[right] {$C$};

\coordinate (B2) at (0,1.5);
\node at (B2)[left] {$D$};
\coordinate (A2) at (2,1.5);
\node at (A2)[below] {$C$};
\coordinate (B3) at (4,1.5);
\node at (B3) [below] {$D$};
\coordinate (A3) at (6,1.5);
\node at (A3)[right] {$C$};

\coordinate (B4) at (0,0);
\node at (B4)[left] {$D$};
\coordinate (A4) at (6,0);
\node at (A4)[right] {$C$};

\draw (B1)--node[midway, above] {$N$}(A1);
\draw (B2)--node[midway, above] {$N$}(A2);
\draw (A2)--node[midway, above] {$M$}(B3);
\draw (B3)--node[midway, above] {$N$}(A3);
\draw (B4)--node[midway, above] {$N$}(A4);

\draw[fill=red, opacity=0.2] (A1)to [out = 180, in = 90] (A2)--(A3)--(A1);
\draw[fill=green, opacity=0.2] (B3)--(B2)--(B4)to [out =0, in =-90](B3);

\draw[fill=black, opacity=0.05] (A1)--(B1)--(B2)--(A2)to[out =90, in =180](A1);
\draw[fill=black, opacity=0.05]  (A3)--(B3) to [out = -90, in = 0] (B4)--(A4);
\end{tikzpicture}
\hspace{.5cm}
\raisebox{1.5cm}{=}
\hspace{.5cm}
\begin{tikzpicture}
\coordinate (A1) at (0,3);
\node at (A1)  [left]{$D$};
\coordinate (B1) at (4,3);
\node at (B1) [right] {$C$};
\coordinate (A4) at (0,0);
\node at (A4)[left] {$D$};
\coordinate (B4) at (4,0);
\node at (B4) [right] {$C$};

\draw (A1)--node[midway, above] {$N$}(B1);
\draw (A4)--node[midway, above] {$N$}(B4);
\draw[fill=black, opacity=0.05]  (A1)--(B1)--(B4)--(A4);
\end{tikzpicture}
\caption{Pasting diagrams for dual pairs.
}\label{fig;dual_pairs}
\end{figure}

\begin{rmk}
While we will not use them as a formal proof, some of the results in the next sections have illuminating graphical descriptions as pasting diagrams.  

In our pasting diagrams vertices represent 0-cells, edges represent 1-cells, and colored regions represent 2-cells.  Since we will need to eventually make circular diagrams (\cref{fig:pasting_trace}), we do not identify 0-cells when they are the same.  Instead we rely on the convention that vertices in consecutive layers that align should be regarded as the same.
We suppress unit isomorphisms and many unit 1-cells.  Pale gray regions are identity 2-cells.

See \cref{fig;dual_pairs} for pasting diagrams for a dual pair.
\end{rmk}

\begin{example} 
For rings $C$ and $D$, an $(C,D)$-bimodule $M$ is right dualizable if it is finitely generated and projective as a right $D$-module.  
A choice of dual is the $(D,C)$-bimodule of right $D$-module homomorphisms $\Hom_D(M,D)$.  Note that $\Hom_D(M,D)$ is regarded as a $(D,C)$-bimodule using the left $C$-module structure on $M$ and left $D$-module structure on $D$. 

The coevaluation map is the composite  
\[
  C\to \Hom_D(M,M)\xleftarrow{\sim} M\otimes_D\Hom_D(M,D)
\] 
where
the second map is an isomorphism since $M$ is  finitely generated and projective as an $D$ module. 
The evaluation map for this dual pair is the  evaluation map 
\[
  \Hom_D(M,D)\otimes_C M\to D.
\]

Dually, $M$ is left dualizable if it is finitely generated and projective as a left $C$-module. 
\end{example}

\begin{example}  
Costenoble-Waner duality \cite[Chapter~18]{maysig:pht} is a special case of the duality theory above and generalizes Spanier-Whitehead and Atiyah duality.  

The parameterized stable homotopy category $\operatorname{Ex}$ of \cite{maysig:pht} has a fiberwise suspension spectrum functor  from the bicategory of fibered spaces (without sections).  If we regard a closed smooth manifold $X$, or compact ENR, as a space over $\ast\times X$  its fiberwise suspension spectrum is dualizable with dual the desuspension of the fiberwise one point compactification of the normal bundle \cite[18.5.1]{maysig:pht}. 
\end{example}

Using this definition and that of a shadow we can define traces of 2-cells associated to dualizable 1-cells. The following definition will be crucial for the constructions below.

\begin{defn}\cite{p:thesis}\label{defn:trace}
  Let $\mc{B}$ be a bicategory with a shadow functor and $(M,N)$ be a dual pair.
  The \textbf{trace} of a 2-cell $f\colon Q\odot M\rightarrow M\odot P$ is the composite:
  \[
   \sh{Q}\cong \sh{Q\odot U_C}\xto{\sh{\id_Q\odot\eta}}
  \sh{Q\odot M\odot N} \xto{\sh{f\odot\id_N}}
  \sh{M\odot P\odot N} \xto{\theta}
  \sh{N\odot M\odot P} \xto{\sh{\epsilon \odot \id_P}}
  \sh{U_D\odot P}\cong   \sh{P}.
  \]
The trace of a 2-cell $g\colon N\odot Q\to P\odot N$ is 
  \[
   \sh{Q}\cong \sh{U_C\odot Q}\xto{\sh{\eta\odot\id_Q}}
  \sh{M\odot N\odot Q} \xto{\sh{\id_M\odot g}}
  \sh{M\odot P\odot N} \xto{\theta}
  \sh{P\odot N\odot M} \xto{\sh{\id_P \odot \epsilon}}
    \sh{P\odot U_D}\cong   \sh{P}.
  \]
\end{defn}

\begin{rmk}
After applying the shadow, we glue together vertical edges to form a bullseye diagram as in \cref{fig:pasting_trace}.  As above, we do collapse most 0-cells.  
In these diagrams we read 2-cells as directed from the innermost circle to the outermost circle and 1-cells clockwise.

Once we have applied the shadow we compose 2-cells by stacking circles.
\end{rmk}

\begin{figure}
\begin{tikzpicture}
\coordinate (A1) at (1,0);
\node at (A1) [right]{$C$};
\node (Q1) at (-1,0)[left]{$Q$};
\draw (0,0) circle (1cm);
\filldraw (1,0) circle (1pt);

\draw (0,0) circle (2cm);
\filldraw (2,0) circle (1pt);
\filldraw ({2*cos(360/3)},{2*sin(360/3)}) circle (1pt);
\filldraw ({2*cos(2*360/3)},{2*sin(2*360/3)}) circle (1pt);

\coordinate (A2) at ({2*cos(360/3)},{2*sin(360/3)});
\node  at (A2)[below right]{$C$};
\coordinate (A3) at ({2*cos(2*360/3)},{2*sin(2*360/3)});
\node at (A3) [above right]{$C$};
\coordinate (B1) at (2,0);
\node  at (B1)[right]{$D$};

\node (M1) at  ({2*cos(360/6)},{2*sin(360/6)})[above right]{$M$};
\node (N1) at  ({2*cos(5*360/6)},{2*sin(5*360/6)})[below  right]{$N$};
\node (Q2) at (-2,0)[left]{$Q$};

\draw (0,0) circle (3cm);
\filldraw (3,0) circle (1pt);
\filldraw ({3*cos(360/3)},{3*sin(360/3)}) circle (1pt);
\filldraw ({3*cos(2*360/3)},{3*sin(2*360/3)}) circle (1pt);

\coordinate (B2) at ({3*cos(360/3)},{3*sin(360/3)});
\node at (B2) [below right]{$D$};
\coordinate (A5)  at ({3*cos(2*360/3)},{3*sin(2*360/3)});
\node at (A5)[above right]{$C$};
\coordinate (B3) at (3,0);
\node at (B3) [right]{$D$};

\node (P1) at  ({3*cos(360/6)},{3*sin(360/6)})[above right]{$P$};
\node (N2) at  ({3*cos(5*360/6)},{3*sin(5*360/6)})[below  right]{$N$};
\node (M2) at (-3,0)[left]{$M$};

\draw (0,0) circle (4cm);
\filldraw ({4*cos(2*360/3)},{4*sin(2*360/3)}) circle (1pt);

\coordinate (B6) at ({4*cos(2*360/3)},{4*sin(2*360/3)});
\node at (B6) [below right]{$D$};
\coordinate (B4) at ({4*cos(2*360/3)},{4*sin(2*360/3)});

\node (P2) at  ({4*cos(360/6)},{4*sin(360/6)})[above right]{$P$};

\draw [fill=red, opacity=0.2](A1)to [out = 90, in =0] (A2) arc(120:-120:2cm)--(A3)to [out = 0, in =-90] (A1);
\draw [fill=blue, opacity=0.2](A3)--(A5) arc(-120:-240:3cm)--(B2)arc(120:0:3cm)--(B3)--(B1)arc(0:240:2cm)--(A3);
\draw [fill=green, opacity=0.2](B4)to[out = 150, in = -170,looseness = 1.2 ] (B2)arc(120:360:3cm)--(B3)to [out = -70, in =-30,looseness = 1.2 ](B4);

\draw[fill=black, opacity=0.05] (A1)to [out = 90, in =0] (A2)arc (120:240:2cm) (A3)to [out = 0, in =-90](A1) arc (0:-360:1cm);
\draw[fill=black, opacity=0.05] (B1)--(B3) arc(0:-120:3cm) (A5)--(A3) arc (240:360:2cm) (B1);
\draw[fill=black, opacity=0.05] (B4)to[out = 150, in = -170,looseness = 1.2 ] (B2)arc(120:0:3cm)--(B3)to [out = -70, in =-30,looseness = 1.2 ](B4) arc(240:600:4cm) (B4);
\end{tikzpicture}
\caption{The trace}\label{fig:pasting_trace}
\end{figure}

The bicategorical trace generalizes both the symmetric monoidal trace \cite{dold_puppe} and the Hattori-Stallings trace \cite{hattori,stallings}.  

\begin{rmk}In \cite{dold_puppe}, there is a particularly elegant proof of the Lefschetz fixed point theorem that  relies on the observations that the fixed point index \cite{dold_idx} is the trace in the stable homotopy category, the Lefschetz number is the trace in the homotopy category of chain complexes, 
and the symmetric monoidal trace is functorial: that is 
\[
  F(\tr(f))=\tr(F(f)).
\]
The Reidemeister trace, in its many variants, is an example of the bicategorical trace \cite{p:thesis}.  Since these are bicategorical traces, the identification of the varied forms of the Reidemeister trace 
is a consequence of the functoriality of the bicategorical trace.
\end{rmk}

Let $(M,N)$ be a dual pair and $Q$ and $P$ be 1-cells so that $N\odot Q\odot M$ and $M\odot P\odot N$ are defined. We then  fix the following notation: 
\begin{gather*}
  \rsm{\eta}{M}{N}{Q}\colon Q\odot M\cong U_C\odot Q\odot M
  \xto{\eta\odot \id_Q\odot \id_M} 
  M\odot N\odot Q\odot M
 \\
  \rsm{\epsilon}{M}{N}{P}\colon M\odot P\odot N\odot M
  \xto{\id_M \odot \id_P \odot\epsilon}
  M\odot P\odot U_D\cong M\odot P
 \\
   \lsm{\eta}{M}{N}{Q} \colon N\odot Q\cong N\odot Q\odot U_C
  \xto{\id_N\odot \id_Q\odot \eta} N\odot Q\odot M\odot N
 \\
   \lsm{\epsilon}{M}{N}{P}\colon N\odot M\odot P\odot N
   \xto{\epsilon\odot \id_P \odot \id_N }
   U_D\odot P\odot N\cong P\odot N
\end{gather*}

 \begin{rmk}\label{left_or_right_for_trace} 
If $(M,N)$ is a dual pair, the {\bf dual} of a map 
$g\colon N\odot Q\to P\odot N$, denoted $g^\star$, is the composite 
\begin{align*}
	Q\odot M\xto{\rsm{\eta}{M}{N}{Q}}
 	M\odot N\odot Q\odot M\xto{\id_M \odot g\odot \id_M}M\odot P\odot N\odot M
	\xto{\rsm{\epsilon}{M}{N}{P} }
	M\odot P
\end{align*}
Since $\tr(g)=\tr(g^\star)$ \cite[Prop.~7.6]{ps:bicat} we freely move between traces defined with respect to $M$ and those defined with respect to $N$.

Note that  $\rsm{\eta}{M}{N}{Q}$ is the dual of $\lsm{\eta}{M}{N}{Q}$,
$\rsm{\epsilon}{M}{N}{Q}$ is the dual of $\lsm{\epsilon}{M}{N}{Q}$,
and 
\begin{gather*}
	\sh{Q}\xto{\tr\left(\rsm{\eta}{M}{N}{Q}\right)=\tr\left(\lsm{\eta}{M}{N}{Q}\right)} \sh{N\odot Q\odot M}
	\quad \text{and } \quad 
	\sh{M\odot P\odot N}\xto{\tr\left(\rsm{\epsilon}{M}{N}{P}\right)=\tr\left(\lsm{\epsilon}{M}{N}{P}\right)} \sh{P}
\end{gather*}
\end{rmk}

\begin{lem}\label{rsm_simplified}
For a dual pair $(M,N)$ and 1-cells $Q$ and $P$ so that $N\odot Q\odot M$ and $M\odot P\odot N$ are defined, $\tr\left(\rsm{\epsilon}MNP\right)$  is 
\[
	\sh{M\odot P\odot N}\xto{\sim}\sh{P\odot N\odot M}\xto{\sh{\id_P\odot \epsilon}} \sh{P\odot U_D}\cong\sh{P}
\]
and 
$\tr\left(\rsm{\eta}MNQ\right)$ is the composite 
\[
\sh{Q}\cong \sh{Q\odot U_C}\xto{\sh{\id_Q\odot \eta}}\sh{Q\odot M\odot N}\cong \sh{N\odot Q\odot M}.
\]
\end{lem}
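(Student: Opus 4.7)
\emph{Proof plan.} Both identities follow by unfolding the trace formula in \cref{defn:trace} and simplifying using the triangle identities for the dual pair $(M, N)$ together with the shadow axioms (naturality of $\theta$ and the hexagon coherence).

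For the first identity, view $\rsm{\epsilon}{M}{N}{P} = \id_{M\odot P}\odot \epsilon$ as a 2-cell $(M\odot P\odot N)\odot M\to M\odot P$ and apply the first trace formula of \cref{defn:trace} with $Q := M\odot P\odot N$. The first two arrows of the resulting composite,
\[
\sh{M\odot P\odot N}\xto{\sh{\id\odot\eta}}\sh{M\odot P\odot N\odot M\odot N}\xto{\sh{\rsm{\epsilon}{M}{N}{P}\odot\id_N}}\sh{M\odot P\odot N},
\]
restrict on the rightmost $N$-strand to the composite $N\to N\odot M\odot N\to N$, which is the identity by the triangle identity for $N$. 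What remains is $\sh{M\odot P\odot N}\xto{\theta}\sh{N\odot M\odot P}\xto{\sh{\epsilon\odot\id_P}}\sh{P}$. Both this and the target form $\sh{M\odot P\odot N}\xto{\theta}\sh{P\odot N\odot M}\xto{\sh{\id_P\odot\epsilon}}\sh{P}$ amount to cycling the factor $M\odot P\odot N$ until the $(N,M)$ pair becomes linearly adjacent and then applying $\epsilon$; they agree by the hexagon coherence for $\theta$ together with $\theta\circ\theta = \id$.

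For the second identity, view $\rsm{\eta}{M}{N}{Q} = \eta\odot\id_{Q\odot M}$ as a 2-cell $Q\odot M\to M\odot(N\odot Q\odot M)$ and apply the first trace formula with $P := N\odot Q\odot M$. Using the identification $\rsm{\eta}{M}{N}{Q}\odot\id_N = \eta\odot\id_{Q\odot M\odot N}$, the trace becomes
\[
\sh{Q}\xto{\sh{\id_Q\odot\eta}}\sh{Q\odot M\odot N}\xto{\sh{\eta\odot\id}}\sh{M\odot N\odot Q\odot M\odot N}\xto{\theta}\sh{N\odot M\odot N\odot Q\odot M}\xto{\sh{\epsilon\odot\id}}\sh{N\odot Q\odot M}.
\]
The last three arrows apply $\epsilon$ to the cyclically-adjacent pair consisting of the final $N$ and the initial $M$. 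By the hexagon axiom for $\theta$, this has the same effect as first cycling the initial $M$ to the right end and then applying $\epsilon$ to the now linearly-adjacent final $N\odot M$ pair. Since that $M$ is precisely the one introduced by the $\eta$ of step~2, the triangle identity for $M$ collapses the $\sh{\eta\odot\id}$ and $\sh{\epsilon\odot\id}$ steps to the identity on that strand, leaving exactly the claimed composite $\sh{Q}\xto{\sh{\id_Q\odot\eta}}\sh{Q\odot M\odot N}\xto{\theta}\sh{N\odot Q\odot M}$.

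The principal obstacle in each case is bookkeeping with associators: the $\theta$ appearing in \cref{defn:trace} and the $\theta$'s needed to reassociate a cyclic string depend on the chosen grouping of the 1-cells, and the hexagon coherence axiom for the shadow is precisely what allows us to pass freely between these groupings. Once those identifications are in hand, each claim reduces to a single application of a triangle identity.
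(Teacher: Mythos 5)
Your proposal is correct and follows essentially the same route as the paper: the paper's proof also unfolds the trace from \cref{defn:trace} and cancels the resulting coevaluation/evaluation pair via a triangle identity, only it does so graphically via the pasting/bullseye diagrams rather than by explicit coherence bookkeeping. The only quibbles are cosmetic: in the first case what is actually invoked is naturality of $\theta$ plus the unit axiom for the shadow (not $\theta\circ\theta=\id$), and in the second case the snake being straightened is the $N$-triangle $(\epsilon\odot\id_N)(\id_N\odot\eta)=\id_N$ (with naturality of $\theta$ sliding the remaining rotation out of the way), but neither affects the argument.
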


\begin{proof}
In the trace of $\rsm{\epsilon}{M}{N}{P}$  (\cref{fig:morita_trace}) there is a coevaluation/evaluation pair that can be canceled.  
Cancelling this pair gives the composite above and is illustrated in \cref{fig:morita_trace_2}.  
The proof for $\rsm{\eta}MNQ$ is similar.   
\end{proof}

\begin{figure}
\begin{subfigure}[t]{.63\linewidth}
\begin{tikzpicture}
\coordinate (B1) at (1,0);
\node at (B1) [right]{$D$};
\coordinate (A1) at ({1*cos(1*360/3)},{1*sin(1*360/3)});
\node at (A1) [below right]{$C$};
\coordinate (A2) at ({1*cos(2*360/3)},{1*sin(2*360/3)});
\node at (A2) [above right]{$C$};

\node (Q1) at (-1,0)[left]{$P$};
\node (M1) at  ({1*cos(1*360/6)},{1*sin(1*360/6)})[right]{$M$};
\node (N1) at  ({1*cos(5*360/6)},{1*sin(5*360/6)})[right]{$N$};

\draw (0,0) circle (1cm);
\filldraw (1,0) circle (1pt);
\filldraw ({1*cos(360/3)},{1*sin(360/3)}) circle (1pt);
\filldraw ({1*cos(2*360/3)},{1*sin(2*360/3)}) circle (1pt);

\draw (0,0) circle (2cm);
\filldraw (2,0) circle (1pt);
\filldraw ({2*cos(360/5)},{2*sin(360/5)}) circle (1pt);
\filldraw ({2*cos(2*360/5)},{2*sin(2*360/5)}) circle (1pt);
\filldraw ({2*cos(3*360/5)},{2*sin(3*360/5)}) circle (1pt);
\filldraw ({2*cos(4*360/5)},{2*sin(4*360/5)}) circle (1pt);

\coordinate (A3) at ({2*cos(2*360/5)},{2*sin(2*360/5)});
\node at (A3)[above left]{$C$};
\coordinate (B2) at ({2*cos(1*360/5)},{2*sin(1*360/5)});
\node at (B2)[above]{$D$};
\coordinate (A4) at ({2*cos(0*360/5)},{2*sin(0*360/5)});
\node at (A4)[right]{$C$};
\coordinate (B3) at ({2*cos(4*360/5)},{2*sin(4*360/5)});
\node at (B3)[above]{$D$};
\coordinate (A5) at ({2*cos(3*360/5)},{2*sin(3*360/5)});
\node at (A5)[above right ]{$C$};

\node (Q2) at (-2,0)[left]{$P$};
\node (M2) at  ({2*cos(3*360/10)},{2*sin(3*360/10)})[above]{$M$};
\node (N2) at  ({2*cos(7*360/10)},{2*sin(7*360/10)})[below]{$N$};
\node (M3) at  ({2*cos(9*360/10)},{2*sin(9*360/10)})[below right]{$M$};
\node (N3) at  ({2*cos(1*360/10)},{2*sin(1*360/10)})[above right ]{$N$};

\draw (0,0) circle (3cm);
\coordinate (A6) at ({3*cos(2*360/3-2*360/3+4*360/5)},{3*sin(2*360/3-2*360/3+4*360/5)});
\node at (A6)[above]{$C$};
\coordinate (B4) at ({3*cos(0*360/3-2*360/3+4*360/5)},{3*sin(0*360/3-2*360/3+4*360/5)});
\node at (B4)[above]{$D$};
\coordinate (A7) at ({3*cos(1*360/3-2*360/3+4*360/5)},{3*sin(1*360/3-2*360/3+4*360/5)});
\node at (A7)[left]{$C$};

\filldraw  ({3*cos(2*360/3-2*360/3+4*360/5)},{3*sin(2*360/3-2*360/3+4*360/5)}) circle (1pt);
\filldraw  ({3*cos(1*360/3-2*360/3+4*360/5)},{3*sin(1*360/3-2*360/3+4*360/5)}) circle (1pt);
\filldraw  ({3*cos(0*360/3-2*360/3+4*360/5)},{3*sin(0*360/3-2*360/3+4*360/5)}) circle (1pt);

\node (M4) at ({3*cos(1*360/6-2*360/3+4*360/5)},{3*sin(1*360/6-2*360/3+4*360/5)})[above]{$M$};
\node (N4) at ({3*cos(5*360/6-2*360/3+4*360/5)},{3*sin(5*360/6-2*360/3+4*360/5)})[right]{$N$};
\node (Q3) at ({3*cos(3*360/6-2*360/3+4*360/5)},{3*sin(3*360/6-2*360/3+4*360/5)})[below]{$P$};

\node (Q4) at ({4*cos(3*360/6-2*360/3+4*360/5)},{4*sin(3*360/6-2*360/3+4*360/5)})[below]{$P$};

\draw (0,0) circle (4cm);
\coordinate (A8) at ({4*cos(0*360/3-2*360/3+4*360/5)},{4*sin(0*360/3-2*360/3+4*360/5)});
\node at (A8)[above]{$C$};
\filldraw ({4*cos(0*360/3-2*360/3+4*360/5)},{4*sin(0*360/3-2*360/3+4*360/5)}) circle (1pt);

 \draw [fill=orange, opacity=0.2](B1)to [out = 90, in = -60](B2)arc (1*360/5:-1*360/5:2cm)--(B3)to [out = 60, in =-90] (B1); 
 \draw [fill=blue, opacity=0.2] (A4)arc (0:-2*360/5:2cm)--(A5)to [out = -90, in =200](A6)to[out =20, in=-60] (A4);
 \draw [fill=blue, opacity=0.2] (A6)to [out =-10 , in=-45,looseness =1.2](A8)to [out  =135, in =100,looseness =1.2] (A7) arc(1*360/3-2*360/3+4*360/5:-360/5:3cm)--(A6);
  \draw [fill=black, opacity=0.05](B1)to [out = 90, in = -60](B2)arc (1*360/5:4*360/5:2cm) (B3)to [out = 60, in =-90] (B1) arc (0:-360:1cm) --(B1)
  ; 
   \draw [fill=black, opacity=0.05] (A4)arc (0:3*360/5:2cm)--(A5)to [out = -90, in =200](A6)arc (4*360/5:-360/5:3cm) --(A6) to[out =20, in=-60] (A4);
   \draw [fill=black, opacity=0.05] (A8)arc (-2*360/3+4*360/5:360-2*360/3+4*360/5:4cm)--(A8)to [out =-45 , in=-10,looseness =1.2](A6)arc(-1*360/5:-8*360/15:3cm)(A7)to [out  =100, in =135,looseness =1.2] (A8);
\end{tikzpicture}
\caption{The trace of $\rsm{\epsilon}{N}{M}{P}$} 
\label{fig:morita_trace}
\end{subfigure}
\begin{subfigure}[t]{.3\linewidth}
\begin{tikzpicture}

\draw (0,0) circle (1cm);
\coordinate (A6) at ({1*cos(2*360/3-2*360/3+4*360/5)},{1*sin(2*360/3-2*360/3+4*360/5)});
\node at (A6)[above]{$C$};
\coordinate (B4) at ({1*cos(0*360/3-2*360/3+4*360/5)},{1*sin(0*360/3-2*360/3+4*360/5)});
\node at (B4)[above]{$D$};
\coordinate (A7) at ({1*cos(1*360/3-2*360/3+4*360/5)},{1*sin(1*360/3-2*360/3+4*360/5)});
\node at (A7)[left]{$C$};

\filldraw  ({1*cos(2*360/3-2*360/3+4*360/5)},{1*sin(2*360/3-2*360/3+4*360/5)}) circle (1pt);
\filldraw  ({1*cos(1*360/3-2*360/3+4*360/5)},{1*sin(1*360/3-2*360/3+4*360/5)}) circle (1pt);
\filldraw  ({1*cos(0*360/3-2*360/3+4*360/5)},{1*sin(0*360/3-2*360/3+4*360/5)}) circle (1pt);

\node (M4) at ({1*cos(1*360/6-2*360/3+4*360/5)},{1*sin(1*360/6-2*360/3+4*360/5)})[above]{$M$};
\node (N4) at ({1*cos(5*360/6-2*360/3+4*360/5)},{1*sin(5*360/6-2*360/3+4*360/5)})[right]{$N$};
\node (Q3) at ({1*cos(3*360/6-2*360/3+4*360/5)},{1*sin(3*360/6-2*360/3+4*360/5)})[below]{$P$};

\node (Q4) at ({2*cos(3*360/6-2*360/3+4*360/5)},{2*sin(3*360/6-2*360/3+4*360/5)})[below]{$P$};

\draw (0,0) circle (2cm);
\coordinate (A8) at ({2*cos(0*360/3-2*360/3+4*360/5)},{2*sin(0*360/3-2*360/3+4*360/5)});
\node at (A8)[above]{$C$};
\filldraw ({2*cos(0*360/3-2*360/3+4*360/5)},{2*sin(0*360/3-2*360/3+4*360/5)}) circle (1pt);

 \draw [fill=blue, opacity=0.2] (A6)to [out =-10 , in=-45,looseness =1.2](A8)to [out  =135, in =100,looseness =1.2] (A7) arc(1*360/3-2*360/3+4*360/5:-360/5:1cm)--(A6);
   \draw [fill=black, opacity=0.05] (A8)arc (-2*360/3+4*360/5:360-2*360/3+4*360/5:2cm)--(A8)to [out =-45 , in=-10,looseness =1.2](A6)arc(-1*360/5:-8*360/15:1cm)(A7)to [out  =100, in =135,looseness =1.2] (A8);
\end{tikzpicture}
\caption{After collapsing the coevaluation/evaluation pair} 
\label{fig:morita_trace_2}
\end{subfigure}
\caption{Diagrams for \cref{rsm_simplified}}
\end{figure}

The stable homotopy category is symmetric monoidal and the suspension spectrum of a closed smooth manifold or compact ENR $X$ is dualizable.   The trace of the identity map 
of $X$ is a stable map $S\to S$ and this is the Euler characteristic of $X$ under the identification of stable $\pi_0$ with $\mathbb{Z}$.  As a result, we refer to symmetric monoidal traces
of identity maps and bicategorical traces of identity 2-cells as {\bf Euler characteristics} and denote them by $\chi(X)$. We formalize this in a definition.

\begin{defn}\label{defn:euler_characteristic}
  If $M\in \mc{B}(C,D)$ and $M$ is right dualizable, the \textbf{Euler characteristic} (\cref{fig:euler_char}) is the trace of the identity 2-cell of $M$ and is a map \[\sh{U_C}\to \sh{U_D}.\]   
If $N$ is the right dual of $M$, $\chi(M)=\chi(N)$.
\end{defn}

\begin{rmk}
  Thinking of the Euler characteristic as a \textit{map} rather than a \textit{object} is an important psychological move for working with constructions in the sequel. The importance of this formulation of the Euler characteristic cannot be overstated.
\end{rmk}

\begin{figure}
\begin{tikzpicture}
\coordinate (A1) at (1,0);
\node at (A1) [left]{$C$};
\node (UA1) at (-1,0)[right]{$U_{C}$};

\draw (0,0) circle (1cm);
\filldraw (1,0) circle (1pt);

\draw (0,0) circle (2cm);

\filldraw (-2,0) circle (1pt);
\filldraw (2,0) circle (1pt);
\coordinate (A2) at (2,0);
\node at (A2) [right]{$C$};
\coordinate (B1) at (-2,0);
\node at (B1) [right]{$D$};
\node (M1) at (0,-2)[above]{$M$};
\node (N1) at (0,2)[below]{$N$};

\draw (0,0) circle (3cm);
\filldraw (-3,0) circle (1pt);
\coordinate (B2) at (-3,0);
\node at (B2) [left]{$D$};

\node (UB1) at (3,0)[right]{$U_{D}$};

\draw [fill=red, opacity=0.2](A2)arc(0:360:2cm)--(A2) --(A1) arc(0:-360:1cm)--(A1);
\draw [fill=green, opacity=0.2](B2)arc(-180:-540:3cm)--(B2) --(B1) arc(180:540:2cm)--(B1);
\end{tikzpicture}
\caption{The Euler characteristic}\label{fig:euler_char}
\end{figure}
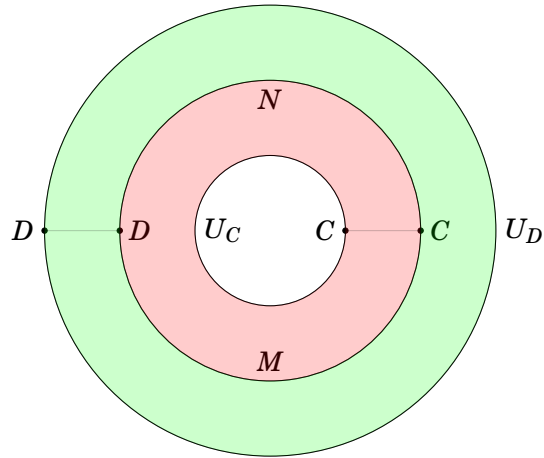

The Euler characteristic is multiplicative on fibrations and its refinements to the Lefschetz number and Reidemeister trace satisfy the appropriate generalizations of multiplicativity \cite{ps:mult}.  These results are consequences of the following very convenient result describing the compatibility between traces and bicategorical composition. It is an easy generalization of the corresponding symmetric monoidal fact
and is an essential foundation for many of the results in the next sections.

\begin{thm} \cite[16.5.1]{maysig:pht}\cite[Prop.~7.5]{ps:bicat} \label{lem:composite_dual_pair}
If $M_1\in \mc{B}(C,D)$ and $M_2\in \mc{B}(D,E)$ are right dualizable, then $M_1\odot M_2$ is right dualizable.  The trace of 
\[
  Q_1\odot M_1\odot M_2\xto{f_1\odot \id_{M_2}}
  M_1\odot Q_2\odot M_2\xto{\id_{M_1}\odot f_2} M_1\odot M_2\odot Q_3
\] is 
\[
\sh{Q_1}\xto{\tr(f_1)}\sh{Q_2}\xto{\tr(f_2)}\sh{Q_3}.
\]
\end{thm}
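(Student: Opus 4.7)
The plan proceeds in three stages: construct an explicit right dual for $M_1 \odot M_2$, unfold the trace of the composite 2-cell according to \cref{defn:trace}, and then identify the result with $\tr(f_2)\circ \tr(f_1)$ by repeatedly applying the shadow coherence axioms.

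First, write $(M_i, N_i, \eta_i, \epsilon_i)$ for the given dual pairs. I would show $M_1\odot M_2$ is right dualizable with dual $N_2\odot N_1$, taking the coevaluation to be
\[
	U_C \xto{\eta_1} M_1\odot N_1 \cong M_1\odot U_D \odot N_1 \xto{\id\odot\, \eta_2\odot\, \id} M_1\odot M_2\odot N_2\odot N_1,
\]
and defining the evaluation $N_2\odot N_1\odot M_1\odot M_2\to U_E$ analogously from $\epsilon_1$ and $\epsilon_2$. The two triangle identities for this dual pair reduce, after an associator dance, to the triangle identities for $(M_1,N_1)$ and $(M_2,N_2)$; this is a routine diagram chase.

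Next I would expand the trace of the composite 2-cell using the coevaluation and evaluation just constructed. This produces, after applying $\sh{-}$, a long composite that inserts $\eta_1$ and then $\eta_2$, applies $f_1\odot\id_{M_2}$ and then $\id_{M_1}\odot f_2$, performs a cyclic permutation via $\theta$, and collapses with $\epsilon_2$ followed by $\epsilon_1$. The crucial observation is that this composite can be refactored to pass through $\sh{Q_2}$: using naturality of $\theta$ and the hexagon coherence in \cref{def:shadow}, the factor $N_1$ can be rotated past $M_2, Q_3, N_2$ so that it sits adjacent to $M_1$, at which point $\epsilon_1$ may be applied ``early''. The diagram then splits cleanly: the portion built from $\eta_1, f_1, \theta, \epsilon_1$ is precisely $\tr(f_1)$ landing in $\sh{Q_2}$, and the remaining portion built from $\eta_2, f_2, \theta, \epsilon_2$ is precisely $\tr(f_2)$. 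This decomposition is transparent in the pasting-diagram language of \cref{fig:pasting_trace}: the trace of the composite corresponds to two concentric annuli, each being the trace diagram of one $f_i$, stacked along their common boundary.

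The main obstacle is the bookkeeping of associators and the cyclic isomorphism $\theta$ in the middle of the long composite, in particular justifying that the rotation of $N_1$ past the $M_2$–$Q_3$–$N_2$ block is governed by the coherence pentagon and hexagon for $\theta$. The essential tools are the two coherence diagrams in \cref{def:shadow} and the consequence $\theta\circ \theta = \id$ recorded immediately thereafter. Once these rearrangements are justified, the identification with $\tr(f_2)\circ\tr(f_1)$ follows directly from the definitions, and no further computation is required.
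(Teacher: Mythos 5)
Your proposal is correct, but it is worth pointing out that the paper does not actually prove this statement: \cref{lem:composite_dual_pair} is imported from the cited sources (May--Sigurdsson 16.5.1 and Ponto--Shulman Prop.~7.5), and the only in-text justification is the informal remark that in \cref{fig:composite} one may slide the outer colored segments over the inner light gray ones. What you have written is a faithful reconstruction of the cited argument: the composite dual pair $(M_1\odot M_2,\ N_2\odot N_1)$, with coevaluation given by $\eta_1$ followed by insertion of $\eta_2$ and evaluation given by $\epsilon_1$ followed by $\epsilon_2$, is exactly the standard construction, and the identification of the trace does go through as you sketch --- split the single cyclic isomorphism into two rotations, push the rotation of $N_1$ past $\eta_2$ and $f_2$ using naturality of $\theta$, and then interchange $\epsilon_1$ (which acts on the $N_1\odot M_1$ block) with $\eta_2$ and $f_2$ (which act on disjoint factors), so that the composite visibly factors through $\sh{Q_2}$ as $\tr(f_1)$ followed by $\tr(f_2)$. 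Two small inaccuracies that do not affect the argument: there is no ``pentagon'' coherence for $\theta$ --- the pentagon governs the bicategory's associator, while the shadow-specific inputs are naturality of $\theta$ and the two diagrams of \cref{def:shadow} --- and the identity $\theta\circ\theta=\id$ is not needed anywhere in this proof. Your concentric-annuli description is precisely the content of \cref{fig:composite}, so your route agrees with the paper's intended, but outsourced, proof rather than diverging from it.
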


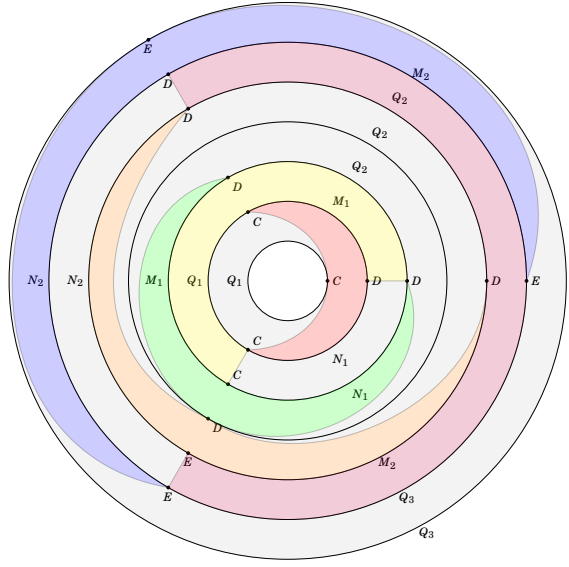
\begin{figure}
\resizebox{.6\textwidth}{!}{
\begin{tikzpicture}
\coordinate (A1) at (1,0);
\node at (A1) [right]{\footnotesize $C$};
\node (Q1) at (-1,0)[left]{\footnotesize $Q_1$};
\draw (0,0) circle (1cm);
\filldraw (1,0) circle (1pt);

\draw (0,0) circle (2cm);
\filldraw (2,0) circle (1pt);
\filldraw ({2*cos(360/3)},{2*sin(360/3)}) circle (1pt);
\filldraw ({2*cos(2*360/3)},{2*sin(2*360/3)}) circle (1pt);

\coordinate (A2) at ({2*cos(360/3)},{2*sin(360/3)});
\node  at (A2)[below right]{\footnotesize $C$};
\coordinate (A3) at ({2*cos(2*360/3)},{2*sin(2*360/3)});
\node at (A3) [above right]{\footnotesize $C$};
\coordinate (B1) at (2,0);
\node  at (B1)[right]{\footnotesize $D$};

\node (M1) at  ({2*cos(360/6)},{2*sin(360/6)})[above right]{\footnotesize $M_1$};
\node (N1) at  ({2*cos(5*360/6)},{2*sin(5*360/6)})[below  right]{\footnotesize $N_1$};
\node (Q2) at (-2,0)[left]{\footnotesize $Q_1$};

\draw (0,0) circle (3cm);
\filldraw (3,0) circle (1pt);
\filldraw ({3*cos(360/3)},{3*sin(360/3)}) circle (1pt);
\filldraw ({3*cos(2*360/3)},{3*sin(2*360/3)}) circle (1pt);

\coordinate (B2) at ({3*cos(360/3)},{3*sin(360/3)});
\node at (B2) [below right]{\footnotesize $D$};
\coordinate (A5)  at ({3*cos(2*360/3)},{3*sin(2*360/3)});
\node at (A5)[above right]{\footnotesize $C$};
\coordinate (B3) at (3,0);
\node at (B3) [right]{\footnotesize $D$};

\node (P1) at  ({3*cos(360/6)},{3*sin(360/6)})[above right]{\footnotesize $Q_2$};
\node (N2) at  ({3*cos(5*360/6)},{3*sin(5*360/6)})[below  right]{\footnotesize $N_1$};
\node (M2) at (-3,0)[left]{\footnotesize $M_1$};

\draw (0,0) circle (4cm);
\filldraw ({4*cos(2*360/3)},{4*sin(2*360/3)}) circle (1pt);

\coordinate (B6) at ({4*cos(2*360/3)},{4*sin(2*360/3)});
\node at (B6) [below right]{\footnotesize $D$};
\coordinate (B4) at ({4*cos(2*360/3)},{4*sin(2*360/3)});

\node (P2) at  ({4*cos(360/6)},{4*sin(360/6)})[above right]{\footnotesize $Q_2$};

\draw (0,0) circle (5cm);
\filldraw (5,0) circle (1pt);
\filldraw ({5*cos(360/3)},{5*sin(360/3)}) circle (1pt);
\filldraw ({5*cos(2*360/3)},{5*sin(2*360/3)}) circle (1pt);

\coordinate (C1) at ({5*cos(2*360/3)},{5*sin(2*360/3)});
\node at (C1) [below]{\footnotesize $E$};
\coordinate (B7) at ({5*cos(360/3)},{5*sin(360/3)});
\node at (B7) [below]{\footnotesize $D$};
\coordinate (B8) at (5,0);
\node at (B8) [right]{\footnotesize $D$};
\node (P3) at  ({5*cos(360/6)},{5*sin(360/6)})[above right]{\footnotesize $Q_2$};
\node (N3) at  ({5*cos(3*360/6)},{5*sin(3*360/6)})[left]{\footnotesize $N_2$};
\node (M3) at  ({5*cos(5*360/6)},{5*sin(5*360/6)})[below]{\footnotesize $M_2$};

\draw (0,0) circle (6cm);
\filldraw (6,0) circle (1pt);
\filldraw ({6*cos(360/3)},{6*sin(360/3)}) circle (1pt);
\filldraw ({6*cos(2*360/3)},{6*sin(2*360/3)}) circle (1pt);

\coordinate (C2) at ({6*cos(2*360/3)},{6*sin(2*360/3)});
\node at (C2) [below]{\footnotesize $E$};
\coordinate (B9) at ({6*cos(360/3)},{6*sin(360/3)});
\node at (B9) [below]{\footnotesize $D$};
\coordinate (C3) at (6,0);
\node at (C3) [right]{\footnotesize $E$};

\node (T1) at ({6*cos(5*360/6)},{6*sin(5*360/6)})[below ]{\footnotesize $Q_3$};
\node (N4) at  ({6*cos(3*360/6)},{6*sin(3*360/6)})[left]{\footnotesize $N_2$};
\node (M4) at  ({6*cos(360/6)},{6*sin(360/6)}) [right]{\footnotesize $M_2$};

\draw (0,0) circle (7cm);
\filldraw ({7*cos(360/3)},{7*sin(360/3)}) circle (1pt);
\coordinate (C4) at ({7*cos(360/3)},{7*sin(360/3)});
\node at (C4) [below]{\footnotesize $E$};
\node (T2) at ({7*cos(5*360/6)},{7*sin(5*360/6)})[below ]{\footnotesize $Q_3$};

\draw [fill=red, opacity=0.2](A1)to [out = 90, in =0] (A2) arc(120:-120:2cm)--(A3)to [out = 0, in =-90] (A1);
\draw [fill=black, opacity=0.05](A1)to [out = 90, in =0] (A2) arc(120:240:2cm)--(A3)to [out = 0, in =-90] (A1) arc (0: -360:1cm);
\draw [fill=yellow, opacity=0.2](A3)--(A5) arc(-120:-240:3cm)--(B2)arc(120:0:3cm)--(B3)--(B1)arc(0:240:2cm)--(A3);
\draw [fill=black, opacity=0.05](A3)--(A5) arc(240:360:3cm)--(B3)--(B1)arc(0:-120:2cm)--(A3);
\draw [fill=green, opacity=0.2](B4)to[out = 150, in = -170,looseness = 1.2 ] (B2)arc(120:360:3cm)--(B3)to [out = -70, in =-30,looseness = 1.2 ](B4);
\draw [fill=black, opacity=0.05](B4)to[out = 150, in = -170,looseness = 1.2 ] (B2)arc(120:0:3cm)--(B3)to [out = -70, in =-30,looseness = 1.2 ](B4) arc(-120: 240:4cm) --(B4);

\draw [fill=orange, opacity=0.2](B6)to [out = 155, in =-130,looseness = 1.2 ] (B7) arc(120:360:5cm)--(B8)to [out = -90, in =-35] (B6);
\draw [fill=black, opacity=0.05](B6)to [out = 155, in =-130,looseness = 1.2 ] (B7) arc(120:0:5cm)--(B8)to [out = -90, in =-35] (B6) arc(-120:240:4cm) --(B6);
\draw [fill=purple, opacity=0.2](C1)--(C2) arc(-120:120:6cm)--(B9)--(B7)arc(120:-120:5cm)--(C1);
\draw [fill=black, opacity=0.05](C1)--(C2) arc(-120:-240:6cm)--(B9)--(B7)arc(120:240:5cm)--(C1);
\draw [fill=blue, opacity=0.2](C4)to[out = -150, in = 170,looseness = 1.2 ] (C2)arc(-120:-360:6cm)--(C3)to [out = 70, in =30,looseness = 1.2 ](C4);
\draw [fill=black, opacity=0.05](C4)to[out = -150, in = 170,looseness = 1.2 ] (C2)arc(240:360:6cm)--(C3)to [out = 70, in =30,looseness = 1.2 ](C4) arc (120:-240:7cm) --(C4);
\end{tikzpicture}
}
\caption{Composite of traces
}\label{fig:composite}
\end{figure}

\cref{fig:composite} is a graphical representation of the composite of traces. 
The trace of $(\id_{M_1}\odot f_2)(f_1\odot \id_{M_2})$ can be visualized by sliding the outer colored segments over the inner light gray segments. 

We will use \cref{lem:composite_dual_pair}  in the following form. Given dual pairs $(M_1,N_1)$ and $(M_2, N_2)$, a 1-cell $L$ so that $M_1\odot L\odot M_2$ is defined,  and an endomorphism 
$Q\odot L\xto{f} L\odot P,$
let $M_1\odot f\odot M_2$ be the composite  
\begin{align*} 
 M_1\odot Q\odot N_1\odot M_1\odot L\odot M_2
 &\xto{\rsm{\epsilon}{M_1}{N_1}{Q} \odot \id_L \odot \id_{M_2}}
 M_1\odot Q\odot L\odot M_2
\\
 & \xto{\id_{M_1} \odot f\odot \id_{M_2}} M_1\odot L\odot P\odot M_2
\\
 &\xto{\id_{M_1}\odot \id_L\odot \rsm{\eta}{M_2}{N_2}{P}} M_1\odot L\odot M_2\odot N_2\odot P\odot M_2.
\end{align*}

\begin{cor}\label{euler_char_shadows_3}
If $L$ is right dualizable 
\[\xymatrix@C=50pt{
\sh{M_1\odot Q\odot N_1}\ar[r]^-{\tr(M_1\odot f\odot M_2)}\ar[d]_-{\tr\left( \rsm{\epsilon}{M_1}{N_1}{Q}\right)}
&\sh{N_2\odot P\odot M_2}
\\
\sh{Q}\ar[r]^-{\tr(f)}&\sh{P}\ar[u]_{\tr\left( \rsm{\eta}{M_2}{N_2}{P}\right)}
}\]
commutes.
\end{cor}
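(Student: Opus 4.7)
The plan is to apply \cref{lem:composite_dual_pair} twice, viewing $M_1\odot L\odot M_2$ as a composite of right dualizable 1-cells. First observe that $M_1\odot f\odot M_2$ is itself a 2-cell of the shape required to trace along $M_1\odot L\odot M_2$: its source rereads as $Q'\odot (M_1\odot L\odot M_2)$ with $Q'=M_1\odot Q\odot N_1$, and its target rereads as $(M_1\odot L\odot M_2)\odot P'$ with $P'=N_2\odot P\odot M_2$.

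The first step is to group $L\odot M_2$ as a single right dualizable 1-cell. Then the defining composite of $M_1\odot f\odot M_2$ partitions into the leading map $\rsm{\epsilon}{M_1}{N_1}{Q}\odot \id_{L\odot M_2}$ followed by the composite
\[
 f_2 \coloneqq (\id_L \odot \rsm{\eta}{M_2}{N_2}{P})\circ (f\odot \id_{M_2})\colon Q\odot L\odot M_2\to L\odot M_2\odot N_2\odot P\odot M_2
\]
tensored on the left with $\id_{M_1}$. This has exactly the shape $(\id_{M_1}\odot f_2)\circ(f_1\odot \id_{L\odot M_2})$ demanded by \cref{lem:composite_dual_pair} with $M_1$ and $L\odot M_2$ playing the roles of the two dualizable factors, so that theorem gives
\[
 \tr(M_1\odot f\odot M_2)=\tr(f_2)\circ \tr\bigl(\rsm{\epsilon}{M_1}{N_1}{Q}\bigr).
\]

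Next, the map $f_2$ is itself a composite of the shape $(\id_L\odot g_2)\circ(g_1\odot \id_{M_2})$ with $g_1=f$ and $g_2=\rsm{\eta}{M_2}{N_2}{P}$, now for the dualizable pair $L$ and $M_2$. A second application of \cref{lem:composite_dual_pair} yields
\[
 \tr(f_2)=\tr\bigl(\rsm{\eta}{M_2}{N_2}{P}\bigr)\circ \tr(f).
\]
Stringing the two identities together produces the commutativity of the square.

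The main obstacle is bookkeeping: one must verify that the two factorizations truly match the defining composite of $M_1\odot f\odot M_2$ modulo suppressed associator and unit isomorphisms, and that each invocation of \cref{lem:composite_dual_pair} is read off with respect to the correct composite dual pair. Graphically, this amounts to sliding the outer colored regions over the inner gray ones in the style of \cref{fig:composite}.
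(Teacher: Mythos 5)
Your proof is correct and follows essentially the same approach as the paper, which applies \cref{lem:composite_dual_pair} directly to the three maps $\rsm{\epsilon}{M_1}{N_1}{Q}$, $f$, and $\rsm{\eta}{M_2}{N_2}{P}$. The only difference is that you make explicit the iteration of the two-factor version of the lemma (first grouping $L\odot M_2$, then decomposing further) where the paper leaves that iteration implicit; the bookkeeping in your two applications is correctly carried out.
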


As the definition of $M_1\odot f\odot M_2$ suggests, 
this follows by applying 
\cref{lem:composite_dual_pair}
to the maps
\begin{gather*}
 M_1\odot Q\odot N_1\odot M_1\xto{\rsm{\epsilon}{M_1}{N_1}{Q}} M_1\odot Q
\\
Q\odot L \xto{ f} L\odot P
\\
P\odot M_2\xto{\rsm{\eta}{M_2}{N_2}{P}} M_2\odot N_2\odot P\odot M_2
\end{gather*}

Finally, we record a useful proposition that will be a needed on a few occasions. It is an easy consequence of the formal properties of the trace. 

\begin{prop}\label{prop:tightening}\cite[Prop.~7.1]{ps:bicat} 
Let $M$ be right dualizable, let $f \colon  Q\odot M \to M \odot P$, $g\colon  Q' \to Q$ and $h\colon P \to P'$ be 2-cells. Then
\[
\sh{h}\circ \tr(f)\circ\sh{g}=\tr (\id_M \odot h)\circ f\circ (g\odot \id_M).
\]
\end{prop}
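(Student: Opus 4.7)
The plan is to unpack the definition of trace on both sides of the asserted equality and match them up using functoriality of the shadow, naturality of $\theta$, and the interchange law in the bicategory.

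For the right hand side, the trace of $(\id_M \odot h) \circ f \circ (g \odot \id_M) \colon Q' \odot M \to M \odot P'$ is, by \cref{defn:trace}, the composite
\[
\sh{Q'} \cong \sh{Q' \odot U_C} \xto{\sh{\id \odot \eta}} \sh{Q' \odot M \odot N} \to \sh{M \odot P' \odot N} \xto{\theta} \sh{N \odot M \odot P'} \xto{\sh{\epsilon \odot \id}} \sh{U_D \odot P'} \cong \sh{P'},
\]
where the unlabeled middle map is $\sh{((\id_M \odot h)\circ f\circ (g \odot \id_M)) \odot \id_N}$. Functoriality of the shadow together with functoriality of $-\odot \id_N$ splits this middle map as the three-fold composite of $\sh{g \odot \id_M \odot \id_N}$, $\sh{f \odot \id_N}$, and $\sh{\id_M \odot h \odot \id_N}$.

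Next, I would push $\sh{g \odot \id_M \odot \id_N}$ to the far left and $\sh{\id_M \odot h \odot \id_N}$ to the far right. For the left migration, the interchange law gives $(g \odot \id_M \odot \id_N) \circ (\id_{Q'} \odot \eta) = (\id_Q \odot \eta) \circ (g \odot \id_{U_C})$, and the right unit isomorphism $\sh{-\odot U_C}\cong\sh{-}$ identifies $\sh{g \odot \id_{U_C}}$ with $\sh{g}$ applied at the very start. Symmetrically, naturality of $\theta$ in its first slot gives $\theta \circ \sh{\id_M \odot h \odot \id_N} = \sh{\id_N \odot \id_M \odot h} \circ \theta$, and then interchange together with the left unit isomorphism $\sh{U_D \odot -}\cong \sh{-}$ turns the residual cell into $\sh{h}$ applied at the very end. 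What is sandwiched in the middle is exactly the five-fold composite defining $\tr(f)$.

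The only real obstacle is bookkeeping the associator and unit isomorphisms during the two migrations. The coherence axioms for the shadow in \cref{def:shadow}, together with standard bicategorical coherence for $\odot$, guarantee that each migration is legitimate, so no idea beyond naturality and interchange is needed. Conceptually, the proposition says that $\tr$ is functorial with respect to pre- and post-composition with cells that act on the $Q$ and $P$ coordinates only, leaving the dualizable object $M$ untouched.
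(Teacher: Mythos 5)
Your proposal is correct. Note that the paper does not give its own proof of this proposition — it is cited directly from \cite[Prop.~7.1]{ps:bicat}, so there is no in-paper argument to compare against. Your argument is exactly the standard one (and essentially the one in the cited reference): unfold $\tr$ on the right-hand side, split the middle 2-cell by functoriality of $\sh{-}$ and of $-\odot\id_N$, then slide $g$ leftward through $\eta$ via interchange and the right-unit coherence, and slide $h$ rightward through $\theta$ via its naturality and then through $\epsilon$ via interchange and the left-unit coherence. The remaining bookkeeping with associators and unitors is handled by the shadow axioms in \cref{def:shadow} together with bicategorical coherence, exactly as you observe.
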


\section{Morita equivalence in bicategories}\label{morita_equivalence}

The Morita invariance of $\thh$ is one of its defining properties, and one of its most useful. If one takes the view that $\thh$ is a shadow on a bicategory, then the Morita invariance becomes a property not of $\thh$ itself, but rather its categorical context. That is, Morita equivalence is the natural notion of an equivalence in a bicategory, so $\thh$ is a Morita invariant simply because it is a bicategorical construct. Since everything we prove about Morita invariance is true at the level of bicategories, we work at that level of generality. This section recalls the definition of a Morita equivalence in a bicategory, and develops the basic properties of such equivalences with respect to trace and Euler characteristic. Since it is a notion of equivalence, the trace and characteristic are essentially insensitive to Morita equivalence, but keeping track of isomorphisms is important for the sequel and future work.

\begin{defn}\label{defn:morita_equivalence}
A pair of one cells $M\in \mc{B}(C,D)$ and $N\in \mc{B}(D,C)$ is a {\bf Morita equivalence} if $(M,N)$ and $(N,M)$ are dual pairs and
the coevaluation and evaluation maps for each dual pair are inverses.  That is, if $\eta_{(M,N)}$ and $\epsilon_{(M,N)}$ are the coevaluation and evaluation 
for $(M,N)$ and $\eta_{(N,M)}$ and $\epsilon_{(N,M)}$ are the coevaluation and evaluation 
for $(N,M)$ then
\begin{align*}
 \eta_{(N,M)}\circ \epsilon_{(M,N)}=\id_{N\odot M} &\qquad \epsilon_{(N,M)}\circ \eta_{(M,N)}=\id_{U_{C}}
 \\
 \eta_{(M,N)}\circ \epsilon_{(N,M)}=\id_{M\odot N}&\qquad \epsilon_{(M,N)}\circ \eta_{(N,M)}=\id_{U_{{D}}}
\end{align*}
\end{defn}

\begin{defn}
If $M \in \mc{B}(C, D)$ and $N \in \mc{B}(D, C)$ define a Morita equivalence, then $C$ and $D$ are said to be \textbf{Morita equivalent}. 
\end{defn}

\begin{rmk}
Note that we use the dual pair as a subscript on the coevaluation and evaluation when there are multiple dual pairs.  We will use a similar notation for the Euler characteristic.  
\end{rmk}

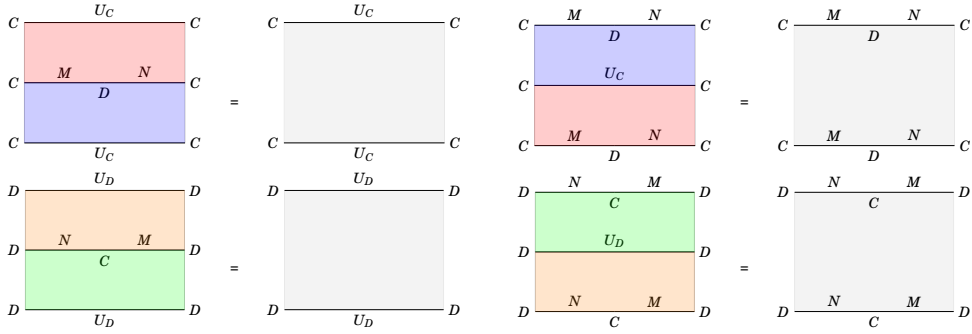
\begin{figure}
    \resizebox{.9\linewidth}{!}{
\begin{tikzpicture}
\coordinate (A1) at (0,3);
\node at (A1)  [left]{$C$};
\coordinate (A2) at (4,3);
\node at (A2) [right] {$C$};
\coordinate (A3) at (0,1.5);
\node at (A3)  [left]{$C$};
\coordinate (B1) at (2,1.5);
\node at (B1)  [below]{$D$};
\coordinate (A4) at (4,1.5);
\node at (A4) [right] {$C$};
\coordinate (A5) at (0,0);
\node at (A5)  [left]{$C$};
\coordinate (A6) at (4,0);
\node at (A6) [right] {$C$};
\draw (A1)--node[midway, above] {$U_C$}(A2);
\draw (A3)--node[midway, above] {$M$}(B1);
\draw (B1)--node[midway, above] {$N$}(A4);
\draw (A5)--node[midway, below] {$U_C$}(A6);
\draw[fill=red, opacity=0.2] (A1)--(A2)--(A4)--(A3)--(A1);
\draw[fill=blue, opacity=0.2] (A3)--(A4)--(A6)--(A5)--(A3);
\end{tikzpicture}
\hspace{.5cm}\raisebox{1.5cm}{=}\hspace{.5cm}
\begin{tikzpicture}
\coordinate (A1) at (0,3);
\node at (A1)  [left]{$C$};
\coordinate (A2) at (4,3);
\node at (A2) [right] {$C$};
\coordinate (A5) at (0,0);
\node at (A5)  [left]{$C$};
\coordinate (A6) at (4,0);
\node at (A6) [right] {$C$};
\draw (A1)--node[midway, above] {$U_C$}(A2);
\draw (A5)--node[midway, below] {$U_C$}(A6);
\draw[fill=black, opacity=0.05] (A1)--(A2)--(A6)--(A5)--(A1);
\end{tikzpicture}
\hspace{1cm}

\begin{tikzpicture}
\coordinate (A1) at (0,0);
\node at (A1)  [left]{$C$};
\coordinate (A2) at (4,0);
\node at (A2) [right] {$C$};
\coordinate (A3) at (0,1.5);
\node at (A3)  [left]{$C$};
\coordinate (B1) at (2,3);
\node at (B1)  [below]{$D$};
\coordinate (A4) at (4,1.5);
\node at (A4) [right] {$C$};
\coordinate (A5) at (0,3);
\node at (A5)  [left]{$C$};
\coordinate (A6) at (4,3);
\node at (A6) [right] {$C$};

\coordinate (B2) at (2,0);
\node at (B2)  [below]{$D$};
\draw (A3)--node[midway, above] {$U_C$}(A4);
\draw (A5)--node[midway, above] {$M$}(B1);
\draw (B1)--node[midway, above] {$N$}(A6);
\draw (A1)--node[midway, above] {$M$}(B2);
\draw (B2)--node[midway, above] {$N$}(A2);
\draw[fill=red, opacity=0.2] (A1)--(A2)--(A4)--(A3)--(A1);
\draw[fill=blue, opacity=0.2] (A3)--(A4)--(A6)--(A5)--(A3);
\end{tikzpicture}
\hspace{.5cm}\raisebox{1.5cm}{=}\hspace{.5cm}
\begin{tikzpicture}
\coordinate (A1) at (0,3);
\node at (A1)  [left]{$C$};
\coordinate (B1) at (2,3);
\node at (B1)  [below]{$D$};
\coordinate (A2) at (4,3);
\node at (A2) [right] {$C$};
\coordinate (A5) at (0,0);
\node at (A5)  [left]{$C$};
\coordinate (B2) at (2,0);
\node at (B2)  [below]{$D$};
\coordinate (A6) at (4,0);
\node at (A6) [right] {$C$};
\draw (A1)--node[midway, above] {$M$}(B1)--node[midway, above] {$N$}(A2);
\draw (A5)--node[midway, above] {$M$}(B2)--node[midway, above] {$N$}(A6);
\draw[fill=black, opacity=0.05] (A1)--(A2)--(A6)--(A5)--(A1);
\end{tikzpicture}
}

    \resizebox{.9\linewidth}{!}{
\begin{tikzpicture}
\coordinate (A1) at (0,3);
\node at (A1)  [left]{$D$};
\coordinate (A2) at (4,3);
\node at (A2) [right] {$D$};
\coordinate (A3) at (0,1.5);
\node at (A3)  [left]{$D$};
\coordinate (B1) at (2,1.5);
\node at (B1)  [below]{$C$};
\coordinate (A4) at (4,1.5);
\node at (A4) [right] {$D$};
\coordinate (A5) at (0,0);
\node at (A5)  [left]{$D$};
\coordinate (A6) at (4,0);
\node at (A6) [right] {$D$};
\draw (A1)--node[midway, above] {$U_D$}(A2);
\draw (A3)--node[midway, above] {$N$}(B1);
\draw (B1)--node[midway, above] {$M$}(A4);
\draw (A5)--node[midway, below] {$U_D$}(A6);
\draw[fill=orange, opacity=0.2] (A1)--(A2)--(A4)--(A3)--(A1);
\draw[fill=green, opacity=0.2] (A3)--(A4)--(A6)--(A5)--(A3);
\end{tikzpicture}
\hspace{.5cm}\raisebox{1.5cm}{=}\hspace{.5cm}
\begin{tikzpicture}
\coordinate (A1) at (0,3);
\node at (A1)  [left]{$D$};
\coordinate (A2) at (4,3);
\node at (A2) [right] {$D$};
\coordinate (A5) at (0,0);
\node at (A5)  [left]{$D$};
\coordinate (A6) at (4,0);
\node at (A6) [right] {$D$};
\draw (A1)--node[midway, above] {$U_D$}(A2);
\draw (A5)--node[midway, below] {$U_D$}(A6);
\draw[fill=black, opacity=0.05] (A1)--(A2)--(A6)--(A5)--(A1);
\end{tikzpicture}
\hspace{1cm}
\begin{tikzpicture}
\coordinate (A1) at (0,0);
\node at (A1)  [left]{$D$};
\coordinate (A2) at (4,0);
\node at (A2) [right] {$D$};
\coordinate (A3) at (0,1.5);
\node at (A3)  [left]{$D$};
\coordinate (B1) at (2,3);
\node at (B1)  [below]{$C$};
\coordinate (A4) at (4,1.5);
\node at (A4) [right] {$D$};
\coordinate (A5) at (0,3);
\node at (A5)  [left]{$D$};
\coordinate (A6) at (4,3);
\node at (A6) [right] {$D$};

\coordinate (B2) at (2,0);
\node at (B2)  [below]{$C$};
\draw (A3)--node[midway, above] {$U_D$}(A4);
\draw (A5)--node[midway, above] {$N$}(B1);
\draw (B1)--node[midway, above] {$M$}(A6);
\draw (A1)--node[midway, above] {$N$}(B2);
\draw (B2)--node[midway, above] {$M$}(A2);
\draw[fill=orange, opacity=0.2] (A1)--(A2)--(A4)--(A3)--(A1);
\draw[fill=green, opacity=0.2] (A3)--(A4)--(A6)--(A5)--(A3);
\end{tikzpicture}
\hspace{.5cm}\raisebox{1.5cm}{=}\hspace{.5cm}
\begin{tikzpicture}
\coordinate (A1) at (0,3);
\node at (A1)  [left]{$D$};
\coordinate (B1) at (2,3);
\node at (B1)  [below]{$C$};
\coordinate (A2) at (4,3);
\node at (A2) [right] {$D$};
\coordinate (A5) at (0,0);
\node at (A5)  [left]{$D$};
\coordinate (B2) at (2,0);
\node at (B2)  [below]{$C$};
\coordinate (A6) at (4,0);
\node at (A6) [right] {$D$};
\draw (A1)--node[midway, above] {$N$}(B1)--node[midway, above] {$M$}(A2);
\draw (A5)--node[midway, above] {$N$}(B2)--node[midway, above] {$M$}(A6);
\draw[fill=black, opacity=0.05] (A1)--(A2)--(A6)--(A5)--(A1);
\end{tikzpicture}
}
\caption{The diagrams for Morita equivalence.  For one dual pair the coevaluation is red and the evaluation is green.  For the other the coevaluation is orange and the evaluation is blue. }
\label{fig:morita}
\end{figure}

\begin{example}
In the bicategory of rings, bimodules, and homomorphisms, a Morita equivalence is a $(C,D)$-bimodule $M$ and a $(D,C)$-bimodule $N$ so that  $(M,N)$ and $(N,M)$ are dual pairs and, using the coevaluation and evaluation from these dual pairs, 
 \[
  M\otimes_DN\cong C\qquad\text{and} \qquad N\otimes_CM\cong D.
 \]
This is the familiar notion of Morita equivalence for rings and implies the  functor 
\[
 N\otimes_C- \colon \Mod_C\to \Mod_D
\] 
is equivalence of the category of (left) modules over $C$ and the category of (left) modules over $D$. 

The most familiar example of Morita equivalence is that the ring of $n$-by-$n$ matrices with elements in $C$ is Morita equivalent to $C$ for any $n > 0$. 
\end{example}

Morita equivalence is the correct notion of equivalence for 0-cells in a bicategory.  
As a result it should respect the Euler characteristic and the trace.  We now consider both of these, starting with the special case of the Euler characteristic.

\begin{prop}\label{euler_char_shadows}
Suppose  $(M,N)$ is a Morita equivalence. Then $\chi_{(M,N)}(M)=\chi_{(M,N)}(N)$ is an isomorphism with inverse $\chi_{(N,M)}(N)=\chi_{(N,M)}(M)$.  
\end{prop}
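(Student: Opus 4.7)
The plan is to verify the statement by direct computation with the definitions, using both parts of the Morita equivalence hypothesis together with the cancellation identity $\theta\circ\theta=\id$ recorded immediately after \cref{def:shadow}. Since the equalities $\chi_{(M,N)}(M)=\chi_{(M,N)}(N)$ and $\chi_{(N,M)}(N)=\chi_{(N,M)}(M)$ are already built into \cref{defn:euler_characteristic}, what remains is to show the two maps are mutually inverse.

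First I would expand the two Euler characteristics by unpacking \cref{defn:trace} with $Q=U_C$, $P=U_D$, and $f=\id_M$ (respectively $f=\id_N$):
\[
\chi_{(M,N)}(M)\colon \sh{U_C}\xrightarrow{\sh{\eta_{(M,N)}}}\sh{M\odot N}\xrightarrow{\theta}\sh{N\odot M}\xrightarrow{\sh{\epsilon_{(M,N)}}}\sh{U_D},
\]
\[
\chi_{(N,M)}(N)\colon \sh{U_D}\xrightarrow{\sh{\eta_{(N,M)}}}\sh{N\odot M}\xrightarrow{\theta}\sh{M\odot N}\xrightarrow{\sh{\epsilon_{(N,M)}}}\sh{U_C}.
\]

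Next I would compose $\chi_{(N,M)}(N)\circ\chi_{(M,N)}(M)$ to obtain a zig-zag in which the middle composite is $\sh{\eta_{(N,M)}}\circ\sh{\epsilon_{(M,N)}}=\sh{\eta_{(N,M)}\circ\epsilon_{(M,N)}}$, which is the identity of $\sh{N\odot M}$ by the first Morita relation of \cref{defn:morita_equivalence}. After this cancellation the two consecutive $\theta$'s may be composed, and by the remark after \cref{def:shadow} their composite is the identity on $\sh{M\odot N}$. The surviving piece is
\[
\sh{U_C}\xrightarrow{\sh{\eta_{(M,N)}}}\sh{M\odot N}\xrightarrow{\sh{\epsilon_{(N,M)}}}\sh{U_C},
\]
which equals $\sh{\epsilon_{(N,M)}\circ\eta_{(M,N)}}=\sh{\id_{U_C}}=\id_{\sh{U_C}}$ by the second Morita relation. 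The reverse composite $\chi_{(M,N)}(M)\circ\chi_{(N,M)}(N)$ is handled identically by exchanging the roles of the two dual pairs and invoking the other two Morita relations.

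There is no serious obstacle here: the argument is a bookkeeping exercise that uses each of the four Morita identities in \cref{defn:morita_equivalence} exactly once, together with $\theta\circ\theta=\id$. The only subtlety worth flagging is the appeal to the cancellation of the two $\theta$'s, which is what makes it possible to funnel the coevaluations and evaluations into pairs to which the Morita identities apply; this is precisely the point at which the shadow axioms enter the argument, rather than just the dual pair structure.
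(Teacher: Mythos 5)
Your proof is correct and is essentially the same as the paper's: the paper records precisely this argument as a single large commutative diagram whose small cells are the four Morita identities and the identity $\theta\circ\theta=\id$, together with functoriality of $\sh{-}$, and it precedes this with the same observation phrased in terms of pasting diagrams (cancelling coevaluation/evaluation pairs). You have correctly identified that the only genuinely shadow-specific input is $\theta\circ\theta=\id$, which is the lemma quoted right after \cref{def:shadow}.
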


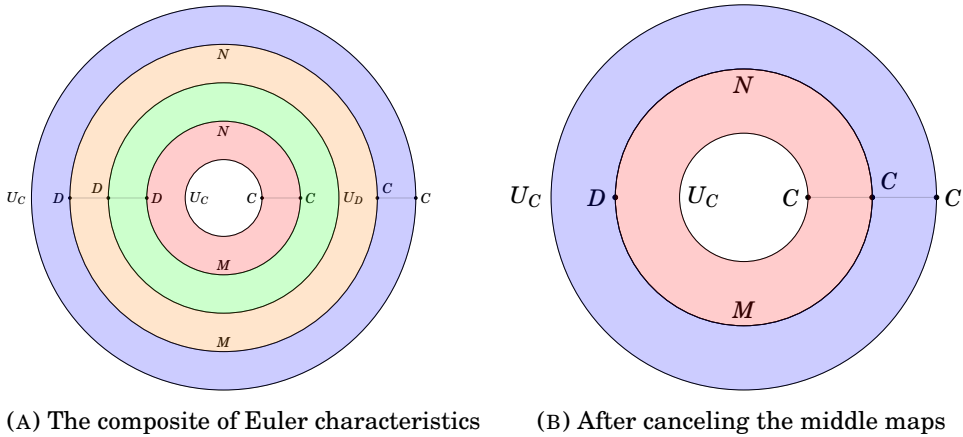
\begin{figure}
    \begin{subfigure}[b]{0.45\textwidth}
    \resizebox{.90\linewidth}{!}
{
\begin{tikzpicture}
\coordinate (A1) at (1,0);
\node at (A1) [left]{$C$};
\node (UA1) at (-1,0)[right]{$U_{C}$};

\draw (0,0) circle (1cm);
\filldraw (1,0) circle (1pt);

\draw (0,0) circle (2cm);

\filldraw (-2,0) circle (1pt);
\filldraw (2,0) circle (1pt);
\coordinate (A2) at (2,0);
\node at (A2) [right]{$C$};
\coordinate (B1) at (-2,0);
\node at (B1) [right]{$D$};
\node (M1) at (0,-2)[above]{$M$};
\node (N1) at (0,2)[below]{$N$};

\draw (0,0) circle (3cm);
\filldraw (-3,0) circle (1pt);
\coordinate (B2) at (-3,0);
\node at (B2) [above left ]{$D$};

\draw (0,0) circle (4cm);
\filldraw (-4,0) circle (1pt);
\coordinate (B3) at (-4,0);
\node at (B3) [left]{$D$};
\filldraw (4,0) circle (1pt);
\coordinate (A3) at (4,0);
\node at (A3) [above right]{$C$};
\node (M2) at (0,-4)[above]{$M$};
\node (N2) at (0,4)[below]{$N$};

\draw (0,0) circle (5cm);
\filldraw (5,0) circle (1pt);
\coordinate (A4) at (5,0);
\node at (A4) [right]{$C$};

\node (UB1) at (3,0)[right]{$U_{D}$};
\node (UA2) at (-5,0)[left]{$U_{C}$};

\draw [fill=red, opacity=0.2](A2)arc(0:360:2cm)--(A2) --(A1) arc(0:-360:1cm)--(A1);
\draw [fill=green, opacity=0.2](B2)arc(-180:-540:3cm)--(B2) --(B1) arc(180:540:2cm)--(B1);
\draw [fill=orange, opacity=0.2](B3)arc(-180:-540:4cm)--(B3) --(B2) arc(180:540:3cm)--(B2);
\draw [fill=blue, opacity=0.2](A4)arc(0:360:5cm)--(A4) --(A3) arc(0:-360:4cm)--(A3);
\end{tikzpicture}
}
        \caption{The composite of Euler characteristics}\label{fig:euler_char_compose}
\end{subfigure}
\begin{subfigure}[b]{0.45\textwidth}
    \resizebox{.98\linewidth}{!}
{
\begin{tikzpicture}
\coordinate (A1) at (1,0);
\node at (A1) [left]{$C$};
\node (UA1) at (-1,0)[right]{$U_{C}$};

\draw (0,0) circle (1cm);
\filldraw (1,0) circle (1pt);

\draw (0,0) circle (2cm);

\filldraw (-2,0) circle (1pt);
\filldraw (2,0) circle (1pt);
\coordinate (A2) at (2,0);
\coordinate (B1) at (-2,0);
\node (M1) at (0,-2)[above]{$M$};
\node (N1) at (0,2)[below]{$N$};

\draw (0,0) circle (2cm);
\filldraw (-2,0) circle (1pt);
\coordinate (B3) at (-2,0);
\node at (B3) [left]{$D$};
\filldraw (2,0) circle (1pt);
\coordinate (A3) at (2,0);
\node at (A3) [above right]{$C$};

\draw (0,0) circle (3cm);
\filldraw (3,0) circle (1pt);
\coordinate (A4) at (3,0);
\node at (A4) [right]{$C$};

\node (UA2) at (-3,0)[left]{$U_{C}$};

\draw [fill=red, opacity=0.2](A2)arc(0:360:2cm)--(A2) --(A1) arc(0:-360:1cm)--(A1);
\draw [fill=blue, opacity=0.2](A4)arc(0:360:3cm)--(A4) --(A3) arc(0:-360:2cm)--(A3);
\end{tikzpicture}
}
        \caption{After canceling the middle maps}\label{fig:euler_char_compose_2}
\end{subfigure}
        \caption{Euler characteristics and Morita equivalence (\cref{euler_char_shadows})}
\end{figure}

\begin{proof}  
\cref{fig:euler_char_compose} is a graphical representation of the composite of Euler characteristics where we have nested the red/green dual pair inside the  orange/blue dual pair. Following \cref{fig:morita} we can first cancel the concentric green and orange regions resulting in \cref{fig:euler_char_compose_2}.  Then the  red and blue regions cancel.  

the commutative diagrams below are a more formal proof. 
\[\xymatrix@C=40pt{
 \sh{U_C}\ar[r]^-{\sh{{\eta_{(M,N)}}}}\ar[rrrddd]^\id
& \sh{M\odot N}\ar[r]^\sim\ar[ddrr]^\id 
 &\sh{N\odot M}\ar[r]^-{\sh{\epsilon_{(M,N)}}} \ar[dr]^\id
 &\sh{U_D}\ar[d]^-{\sh{{\eta_{(N,M)}}}}
\\
\sh{U_D}\ar[d]_-{\sh{{\eta_{(N,M)}}}}\ar[ddrdrr]^\id 
 &&&\sh{N\odot M}\ar[d]^\sim
 \\
\sh{N\odot M}\ar[d]^\sim\ar[ddrr]^\id 
&&&\sh{M\odot N}\ar[d]^-{\sh{\epsilon_{(N,M)}}}
 \\
 \sh{M\odot N}\ar[d]_-{\sh{\epsilon_{(N,M)}}}\ar[dr]^\id
&&&\sh{U_C}
\\
\sh{U_C}\ar[r]_-{\sh{{\eta_{(M,N)}}}}
& \sh{M\odot N}\ar[r]^\sim
 &\sh{N\odot M}\ar[r]_-{\sh{\epsilon_{(M,N)}}} 
 &\sh{U_D}
}\]
\end{proof}

More generally, we have the following proposition. 

\begin{prop}\label{euler_char_shadows_2}
Suppose  $(M,N)$ is a Morita equivalence.
If $Q$ is a 1-cell so that $N\odot Q\odot M$ is defined, 
\[
 \sh{Q} \xto{\tr\left(\rsm{\eta}{M}{N}{Q}\right)} \sh{N\odot Q\odot M}
\]
is an  isomorphism with inverse $\sh{N\odot Q\odot M}\xto{\tr\left(\rsm{\epsilon}{N}{M}{Q}\right)} \sh{Q} $.
\end{prop}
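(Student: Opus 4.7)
The plan is to mimic the strategy used for \cref{euler_char_shadows}, treating that result as the special case $Q = U_C$ of the current statement. Using \cref{rsm_simplified} I would first rewrite each trace as a simple composite of shadowed 2-cells and an instance of $\theta$:
\begin{align*}
\tr\!\left(\rsm{\eta}{M}{N}{Q}\right) &\colon \sh{Q}\cong \sh{Q\odot U_C}\xto{\sh{\id_Q\odot \eta_{(M,N)}}}\sh{Q\odot M\odot N}\xto{\theta}\sh{N\odot Q\odot M},\\
\tr\!\left(\rsm{\epsilon}{N}{M}{Q}\right) &\colon \sh{N\odot Q\odot M}\xto{\theta}\sh{Q\odot M\odot N}\xto{\sh{\id_Q\odot \epsilon_{(N,M)}}}\sh{Q\odot U_C}\cong\sh{Q}.
\end{align*}

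Next I would compose these in both orders. In each composite the two middle $\theta$ maps are the cyclic isomorphisms for the same decomposition $(Q\odot M)\mathbin{|}N$ traversed in opposite directions, so by the identity
$\theta\circ \theta=\id$ noted immediately after \cref{def:shadow} they cancel. What remains is the shadow of $\id_Q\odot(\epsilon_{(N,M)}\circ \eta_{(M,N)})$ in one direction and of $\id_{N}\odot\id_Q\odot\id_M$ (after handling a symmetric cancellation) composed from $\id_N\odot\id_Q\odot(\eta_{(M,N)}\circ\epsilon_{(N,M)}\cdot\ldots)$ in the other. At this point the Morita equivalence hypotheses
\[
\epsilon_{(N,M)}\circ \eta_{(M,N)}=\id_{U_C}\qquad\text{and}\qquad \eta_{(M,N)}\circ \epsilon_{(N,M)}=\id_{M\odot N}
\]
together with functoriality of $\odot$ and of $\sh{-}$ collapse the remaining composites to identities.

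I expect the only subtle point to be bookkeeping with $\theta$: one must verify that the two $\theta$'s appearing when the composite is formed are indeed applied to the same splitting of a three-fold product (namely $(Q\odot M)$ against $N$), so that the cancellation via $\theta\circ\theta=\id$ is literally an instance of that identity rather than just a coincidence of source and target. Once this is written out carefully, the rest of the argument is formally the same diagram chase that proved \cref{euler_char_shadows}, and could even be rendered by the same nested bullseye picture as \cref{fig:euler_char_compose,fig:euler_char_compose_2} with an additional inert annular region labelled $Q$ inserted between the coevaluation and evaluation layers.
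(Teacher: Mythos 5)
Your proposal is correct and follows essentially the same route as the paper: apply \cref{rsm_simplified} to write each trace as a composite of a unit isomorphism, a shadowed coevaluation or evaluation, and a single cyclicity isomorphism $\theta$ for the splitting $(Q\odot M)\mid N$, then compose in both orders and use $\theta\circ\theta=\id$ together with the Morita identities $\epsilon_{(N,M)}\circ\eta_{(M,N)}=\id_{U_C}$ and $\eta_{(M,N)}\circ\epsilon_{(N,M)}=\id_{M\odot N}$; the paper records precisely this as a commutative diagram chase. One small point to tidy when writing it out: in the composite $\tr(\rsm{\eta}{M}{N}{Q})\circ\tr(\rsm{\epsilon}{N}{M}{Q})$ the two $\theta$'s are not initially adjacent, so you must first apply the Morita relation $\eta\circ\epsilon=\id_{M\odot N}$ to collapse the middle before $\theta\circ\theta=\id$ applies, whereas in the other order the $\theta$'s cancel first and the Morita relation $\epsilon\circ\eta=\id_{U_C}$ finishes.
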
 

\begin{proof}  
Recall from \cref{rsm_simplified} that the  trace of $\rsm{\epsilon}{N}{M}{Q}$  is 
\[
 \sh{N\odot Q\odot M}\xto{\sim}\sh{Q\odot M\odot N}\xto{\sh{\id_Q\odot \epsilon_{(N,M)}}}\sh{Q\odot U_C}\cong \sh{Q}
\]
and the trace of $\rsm{\eta}MNQ$ is 
\[
 \sh{Q}\cong \sh{Q\odot U_C}\xto{\sh{\id_Q\odot \eta}}\sh{Q\odot M\odot N}\cong \sh{N\odot Q\odot M}.
\]

Composing these maps in both orders we have the following commutative diagrams.
\[\xymatrix{
 \sh{N\odot Q\odot M}\ar[r]^-{\theta}\ar[dddrrrr]^\id
 &\sh{Q\odot M\odot N}\ar[rr]^-{\sh{\id_Q\odot \epsilon_{(N,M)}}}\ar@/_10pt/[drr]_\id
 &&\sh{Q\odot U_C}\ar[r]\ar[d]_-{\sh{\id_Q\odot \eta_{(M,N)}}}\ar[dr]^\theta
 &\sh{Q}
 \\
 \sh{Q}\ar[dddrrrr]^\id
 &&&\sh{Q\odot M\odot N}\ar[dr]^\theta
 &\sh{U_C\odot Q}\ar[u]\ar[d]^-{\sh{\eta_{(M,N)}\odot \id_Q}}
 \\
 \sh{U_C\odot Q}\ar[d]_{\sh{\eta_{(M,N)}\odot \id_Q}}\ar[dr]^{\theta}\ar[u]
 &&&&\sh{M\odot N\odot Q}\ar[d]^-{\theta}
 \\
 \sh{M\odot N\odot Q}\ar[d]^-{\theta}\ar[dr]
 &\sh{Q\odot U_C}\ar@/^10pt/[drr]^\id\ar[d]^{\sh{\id_Q\odot \eta_{(M,N)}}}
 &&&\sh{N\odot Q\odot M}.
 \\
 \sh{N\odot Q\odot M}\ar[r]^-{\theta}
 &\sh{Q\odot M\odot N}\ar[rr]_-{\sh{\id_Q\odot \epsilon_{(N,M)}}}
 &&\sh{Q\odot U_C}\ar[r]
 &\sh{Q}
 }\]
\end{proof}

\begin{cor}\label{euler_char_shadows_4} 
Suppose  $(M_1,N_1)$ is a dual pair, $(M_2,N_2)$ is a Morita equivalence, $Q$ and $P$ are 1-cells so that 
$M_1\odot Q\odot N_1$ and $N_2\odot P\odot M_2$ are defined.  For a right dualizable 1-cell
$L$ so that $M_1\odot L\odot M_2$ is defined, and a 2-cell $f\colon Q\odot L\to L\odot P$ the following diagram commutes.
\[\xymatrix@C=50pt{
 \sh{M_1\odot Q\odot N_1}\ar[r]^-{\tr(M_1\odot f\odot M_2)}\ar[d]_-{\tr\left(\rsm{\epsilon}{M_1}{N_1}{Q} \right)}
 &\sh{N_2\odot P\odot M_2}\ar[d]^{\tr\left(\rsm{\epsilon}{N_2}{M_2}{P}\right) }
 \\
 \sh{Q}\ar[r]^-{\tr(f)}
 &\sh{P}
}\]

\end{cor}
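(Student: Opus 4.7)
The plan is to deduce this corollary directly from \cref{euler_char_shadows_3} by inverting the right-hand vertical map using the Morita equivalence hypothesis on $(M_2,N_2)$.

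First, I note that \cref{euler_char_shadows_3}, applied in our setting, yields the commutative diagram
\[\xymatrix@C=50pt{
 \sh{M_1\odot Q\odot N_1}\ar[r]^-{\tr(M_1\odot f\odot M_2)}\ar[d]_-{\tr\left(\rsm{\epsilon}{M_1}{N_1}{Q} \right)}
 &\sh{N_2\odot P\odot M_2}
 \\
 \sh{Q}\ar[r]^-{\tr(f)}
 &\sh{P}\ar[u]_-{\tr\left(\rsm{\eta}{M_2}{N_2}{P}\right)}
}\]
in which the right-hand vertical arrow goes \emph{upward}. To convert this into the desired square (whose right-hand arrow goes downward and is labeled $\tr(\rsm{\epsilon}{N_2}{M_2}{P})$), I use \cref{euler_char_shadows_2} applied to the Morita equivalence $(M_2,N_2)$ and the 1-cell $P$: the map $\tr(\rsm{\eta}{M_2}{N_2}{P})$ is an isomorphism with inverse $\tr(\rsm{\epsilon}{N_2}{M_2}{P})$.

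Postcomposing the identity of commutative diagrams from \cref{euler_char_shadows_3} with $\tr(\rsm{\epsilon}{N_2}{M_2}{P})$ on the right then gives
\[
\tr\left(\rsm{\epsilon}{N_2}{M_2}{P}\right)\circ \tr(M_1\odot f\odot M_2)
=\tr\left(\rsm{\epsilon}{N_2}{M_2}{P}\right)\circ \tr\left(\rsm{\eta}{M_2}{N_2}{P}\right)\circ \tr(f)\circ \tr\left(\rsm{\epsilon}{M_1}{N_1}{Q}\right),
\]
and by \cref{euler_char_shadows_2} the first two factors on the right compose to the identity, leaving exactly $\tr(f)\circ \tr(\rsm{\epsilon}{M_1}{N_1}{Q})$.

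There is no real obstacle here beyond bookkeeping: the whole content is packaged in the two previous results, and the argument amounts to observing that \cref{euler_char_shadows_3} and \cref{euler_char_shadows_4} differ only in whether one phrases the right-hand edge using $\tr(\rsm{\eta}{M_2}{N_2}{P})$ or its inverse $\tr(\rsm{\epsilon}{N_2}{M_2}{P})$. Graphically, this is exactly the cancellation of a nested coevaluation/evaluation pair from the Morita equivalence, analogous to the collapse illustrated in \cref{fig:morita} and used in the proof of \cref{euler_char_shadows}.
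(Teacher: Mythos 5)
Your argument is correct and is exactly the paper's proof: the paper deduces \cref{euler_char_shadows_4} by citing \cref{euler_char_shadows_2,euler_char_shadows_3}, which is precisely the composition-and-inversion bookkeeping you spell out. No further comment is needed.
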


\begin{proof}
This follows from \cref{euler_char_shadows_2,euler_char_shadows_3}. 
\end{proof}

\section{Euler characteristics for base change objects}\label{sect:euler_char}

Given a map $f\colon  A \to C$ of monoids in a symmetric monoidal category $\mc{V}$, we can define an $(A, C)$-bimodule $ _f C$ which has an action on the left through $f\colon  A \to C$. Similarly, we get a $(C, A)$-bimodule $C_f$. These objects are used to \textit{change base} in the following sense. Given a $(C,D)$-bimodule $M$, computing the composition $_f C\odot  M$ is the same as computing $(f\times \id_D)^\ast M$. 
These types of objects arise in any bicategory defined from an indexed monoidal category \cite{ps:indexed}, but we need not work in that generality here. Instead, we work in the two examples from \cref{sect:duality_and_trace}.

The main concern of \S\ref{sec:morita_base_change} will be recovering the classical ``Morita invariance'' statement that $\thh(A) \xrightarrow{\sim} \thh(\Mod^c_A)$ as well as a new, twisted, version of that statement by working in bicategories. Unraveling the necessary category theory, this hinges on the following question. Given a functor of $\mc{V}$-categories $F\colon  \mc{A} \to \mc{C}$, we have an associated map of 2-cells $F: U_{\mc{A}} \to U_{\mc{C}}$. Applying the shadow, we get a map $\sh{U_{\mc{A}}} \to \sh{U_{\mc{C}}}$. We could instead consider the 1-cell $_F \mc{C}$ (defined carefully in \cref{defn:base_change_category}) and compute $\chi (_F \mc{C}) \colon  \sh{U_{\mc{A}}} \to \sh{U_{\mc{C}}}$. It is clear that these maps should be the same, and we verify this in \cref{lem:base_change_euler_ex}. This simple observation is the core of what allows us to prove our main theorems. 

In \S\ref{sec:morita_trace} we use this identification to describe traces across Morita equivalences. 
Combining \cref{base_change_one_object,lem:base_change_euler_ex} gives 
\cref{ex:base_change_and_natural_transformation}.  This result is then used to show \cref{thm:lm_generalization}.

\subsection{Morita equivalence arising from base change}\label{sec:morita_base_change}
In what follows, let $\mc{V}$ be a symmetric monoidal category.

\begin{rmk}
We may  choose $\mc{V}$ to be any symmetric monoidal model category that satisfies the conditions of \cite[Prop.~6.1]{schwede_shipley}. In that case, the categories described below have associated model structures and homotopy categories. 
\end{rmk}

\begin{defn}\label{defn:base_change_monoid}
 Let $f\colon  A \to C$ be a morphism in $\mbf{Mon}(\mc{V})$. Then there is an $(A, C)$-bimodule  $ _f C$, which is $C$ with a left $A$-action given by $f$,  and a $(C, A)$-bimodule $C_f$ which is $C$ with a right $A$-action given by $f$.
\end{defn}

\begin{defn}\label{defn:base_change_category}
 Let $F\colon  \mc{A} \to \mc{C}$ be a morphism in $\mbf{Cat}(\mc{V})$, i.e. a functor of categories enriched in $\mc{V}$. Then there is an $(\mc{A}, \mc{C})$-bimodule $_F \mc{C}$ where the left action of $\mc{A}$ is given by
  \begin{align*}
  \mc{A}(a, a') \otimes ({_F \mc{C}})(a', c) 
   &= \mc{A}(a, a') \otimes \mc{C}(F(a'), c) 
  \\
  &\to \mc{C}(F(a), F(a')) \otimes \mc{C}(F(a'), c) \to \mc{C}(F(a), c) = {_F \mc{C}(a, c)} 
  \end{align*}
  There is a dual $(\mc{C}, \mc{A})$-bimodule that we denote $\mc{C}_F$. 
\end{defn}

\begin{defn}
We call any of $_f C$, $C_f$, $_F \mc{C}$, $\mc{C}_F$ \textbf{base change 1-cells}.
\end{defn}

\begin{rmk}
  It is important to remember that the maps $f\colon  A \to C$ and $F\colon  \mc{A} \to \mc{C}$ are \textit{not} 1-cells in the categories $\mc{B}(\mbf{Mon}(\mc{V}))$ and $\mc{B}(\mbf{Cat}(\mc{V}))$. They are the vertical 1-cells in an attendant double category (see, e.g. \cite{shulman_framed}).
\end{rmk}

In special cases, base change one cells may exhibit a Morita equivalence. We isolate this special case in a definition. 

\begin{defn}
A $\sV$-functor $F\colon \sA\to \sC$ 
is a {\bf Morita equivalence} if  $({_F\sC},\sC_F)$ is a Morita equivalence.  
In particular, 
 $F$ is a Morita equivalence if and only if $F$ is full and faithful and the map induced by composition  
\[
 \sC(c,F(-))\odot\sC(F(-),c')\to \sC(c,c')
\] 
is an isomorphism.
\end{defn}

The first step in answering the questions posed in the introduction to this section is to give descriptions of the coevaluation, evaluation, and Euler characteristic for base change objects.

\begin{prop}\cite[Appendix]{p:thesis}\cite[Lem.~7.6]{ps:indexed}\label{lem:base_change_euler}
\begin{enumerate}
  \item If $f\colon A\to C$ is a monoid homomorphism, $(_fC,C_f)$ is a dual pair.
  \item If $F\colon \sA\to \sC$ be a $\sV$-functor between $\sV$-categories, $({_F\sC},\sC_F)$ is a dual pair.
\end{enumerate}
\end{prop}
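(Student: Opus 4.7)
The plan is to exhibit explicit coevaluation and evaluation 2-cells in each case and then verify the triangle identities. Since (1) is the one-object special case of (2)---obtained by viewing the monoid homomorphism $f\colon A\to C$ as a $\sV$-functor between one-object $\sV$-categories---I would treat (2) in detail and then specialize.

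First I would identify the two relevant composites. Using the bar/coend description of horizontal composition from \cref{sect:thh_spectral} together with the coYoneda lemma, $_F\sC\odot \sC_F$ is the $(\sA,\sA)$-bimodule with
\[
  ({_F\sC}\odot \sC_F)(a,a')\;\cong\;\int^{c\in \sC}\sC(F(a),c)\otimes \sC(c,F(a'))\;\cong\;\sC(F(a),F(a')),
\]
while $\sC_F\odot {_F\sC}$ is the $(\sC,\sC)$-bimodule with
\[
  (\sC_F\odot {_F\sC})(c,c')\;\cong\;\int^{a\in \sA}\sC(c,F(a))\otimes \sC(F(a),c').
\]
With these identifications, I would define the coevaluation $\eta\colon U_\sA\to {_F\sC}\odot \sC_F$ by the morphisms $\sA(a,a')\to \sC(F(a),F(a'))$ induced by $F$ on hom-objects, and the evaluation $\epsilon\colon \sC_F\odot {_F\sC}\to U_\sC$ by the universal map out of the coend assembled from composition in $\sC$, namely $\sC(c,F(a))\otimes \sC(F(a),c')\to \sC(c,c')$. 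Both are maps of bimodules: $\eta$ by functoriality of $F$, and $\epsilon$ by naturality of composition in the middle variable.

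The triangle identities then unpack into associativity and unitality of composition in $\sC$ combined with functoriality of $F$. Concretely, the composite
\[
  {_F\sC}\cong U_\sA\odot {_F\sC}\xto{\eta\odot \id}{_F\sC}\odot \sC_F\odot {_F\sC}\xto{\id\odot \epsilon}{_F\sC}\odot U_\sC\cong {_F\sC}
\]
evaluated at $(a,c)$ collapses to the identity on $\sC(F(a),c)$ because $F$ preserves units and composition in $\sC$ is unital; the dual triangle is symmetric. The main obstacle I anticipate is the bookkeeping: tracking the left/right $\sA$- and $\sC$-actions described in \cref{def:base_change} carefully enough to see that they agree with the coend presentation of $\odot$, and that $\eta$ and $\epsilon$ respect both actions simultaneously.

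Specializing to the one-object case recovers part (1): the coevaluation becomes $f\colon A\to C$ (after the identification $_fC\otimes_C C_f\cong C$ coming from the right $C$-action) and the evaluation becomes the multiplication $C\otimes_A C\to C$ (well-defined since the two $A$-actions agree via $f$), and the triangle identities reduce to associativity and unitality of multiplication on $C$.
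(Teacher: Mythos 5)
Your choices of coevaluation and evaluation are precisely the ones the paper records immediately after the statement (the verification of the triangle identities is deferred to the cited references): the paper's coevaluation is $F$ on hom-objects followed by the canonical map $\sC(F(a),F(a'))\to \sC(F(a),-)\odot\sC(-,F(a'))$, which is exactly the inverse of the coYoneda isomorphism you invoke, and its evaluation is the composition map out of the coend, as you propose. So this is essentially the same approach, with your use of coYoneda serving only as a cleaner phrasing of the paper's explicit map into the bar construction.
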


For objects $a$ and $a'$ in $\mc{A}$, a choice for the   
coevaluation is
\begin{align*}
 {\sA}(a,a')\xto{F} \sC(F(a),F(a'))&\cong \sC(F(a),F(a'))\otimes 1_{\sV}
 \\
 &\to \sC(F(a),F(a'))\otimes\sC(F(a'),F(a'))\to \sC(F(a),-)\odot\sC(-,F(a')) 
\end{align*}
For objects 
$c,c'$ of $\sC$, the corresponding  evaluation 
is induced by the composition of morphisms as in the following diagram.
\[\xymatrix{ 
  \coprod_{a\in \mc{A}} \sC(c,F(a))\otimes \sC(F(a),c') \ar[r]\ar[d]
  &\sC(c,c')
  \\
  \sC(c,F(-))\odot \sC(F(-),c') \ar@{.>}[ur]
}\]

If $\mc{M}$ is a $(\mc{C},\mc{D})$-bimodule and $F\colon \mc{A}\to \mc{C}$ and $G\colon \mc{B}\to \mc{D}$ are enriched functors, 
${_F\mc{M}_G}$ is the $(\mc{A},\mc{B})$-bimodule defined as the composite 
\[
  \mc{A}^{\text{op}}\otimes \mc{B}\xto{F\otimes G}\mc{C}^{\text{op}}\otimes \mc{D}\xto{\mc{M}}\mc{V}
\]
If $\mc{Q}$ is an $(\mc{C},\mc{C})$-bimodule and $F$ is as above, we have the following composite 
\[
  \rbm{\epsilon}{F}{\mc{C}}{\mc{Q}}\colon {_F\mc{Q}_F}\odot {_F\mc{C}}
  \cong {_F\mc{C}}\odot \mc{Q}\odot \mc{C}_F\odot {_F\mc{C}} 
  \xto{\rsm{\epsilon}{_F\mc{C}}{\mc{C}_F}{\mc{Q}}}{_F\mc{C}}\odot \mc{Q} 
\]
and the corresponding maps for $\rbm{\eta}{F}{\mc{C}}{\mc{Q}}$, $\lbm{\epsilon}{F}{\mc{C}}{\mc{Q}}$, and $\lbm{\eta}{F}{\mc{C}}{\mc{Q}}$.    The are also versions of these maps for monoids.

The following statement (and its restriction to the case of monoids) follows immediately from the 
coevaluation and evaluation above.
 
\begin{lem}\label{lem:base_change_euler_ex}
Let $F\colon \sA\to \sC$ be a $\sV$-functor between $\sV$-categories  and $\mc{Q}$ be a $(\mc{C},\mc{C})$-bimodule.  
Then the following two diagrams commute.
\[\xymatrix@C=12pt{
  \coprod_{a}\mc{A}(a,a)\ar[d]\ar[r]^-F&\coprod _{a}\mc{C}(F(a),F(a))\ar[r]
  &\coprod _c\mc{C}(c,c)\ar[d]
  \\ 
  \sh{U_\mc{A}}\ar[rr]^{\chi_{({_F\sC},\sC_F)}(_F\mc{C})=\chi_{({_F\sC},\sC_F)}(\mc{C}_F)}
  && \sh{U_\mc{C}}
 }
 \hspace{3em}
 \xymatrix{
  \coprod_{a} \mc{Q}(F(a),F(a))\ar[r]\ar[d]
  &\coprod_{c} \mc{Q}(c,c)\ar[d]
   \\
  \sh{{_F\mc{Q}_F}}\ar[r]^-{\tr\left(\rbm{\epsilon}{F}{\mc{D}}{\mc{Q}}\right)}
  &\sh{\mc{Q}}
}\]
\end{lem}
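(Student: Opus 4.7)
The plan is to verify each square by direct element-chase, using the explicit formulas for the coevaluation and evaluation of the dual pair $({_F\mc{C}},\mc{C}_F)$ that were just recorded above the statement, together with \cref{rsm_simplified} for the second square. Both vertical maps on the right (and their counterparts on the left) are the canonical inclusions of diagonal summands into the shadow; since the relevant $\eta$'s and $\epsilon$'s are given concretely, the composite on the bottom row unfolds into a finite manipulation of coend representatives.

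For the first square, unfold $\chi_{({_F\mc{C}},\mc{C}_F)}({_F\mc{C}}) = \tr(\id_{{_F\mc{C}}})$ as
\[
\sh{U_\mc{A}}\xto{\sh{\eta}}\sh{{_F\mc{C}}\odot\mc{C}_F}\xto{\theta}\sh{\mc{C}_F\odot{_F\mc{C}}}\xto{\sh{\epsilon}}\sh{U_\mc{C}}.
\]
Given $\alpha \in \mc{A}(a,a)$, the explicit formula for $\eta$ sends $\alpha$ to the class of $F(\alpha)\otimes \id_{F(a)}$ in the $(F(a),F(a))$-summand of ${_F\mc{C}}\odot\mc{C}_F$. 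The shadow isomorphism $\theta$ cyclically permutes this to the class of $\id_{F(a)}\otimes F(\alpha)$ in $\mc{C}_F\odot{_F\mc{C}}$, and the evaluation $\epsilon$ — which is given by composition in $\mc{C}$ — collapses this to $F(\alpha) \in \mc{C}(F(a),F(a))$. This is exactly the image of $\alpha$ under the top composite followed by the right vertical inclusion.

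For the second square, \cref{rsm_simplified} identifies $\tr\left(\rbm{\epsilon}{F}{\mc{D}}{\mc{Q}}\right)$ with the composite
\[
\sh{{_F\mc{Q}_F}}\xto{\sim}\sh{\mc{Q}\odot{_F\mc{C}}\odot\mc{C}_F}\xto{\theta}\sh{\mc{C}_F\odot\mc{Q}\odot{_F\mc{C}}}\xto{\sh{\epsilon\odot \id_\mc{Q}}}\sh{\mc{Q}},
\]
so the same element-chase applies: a diagonal element $q \in \mc{Q}(F(a),F(a))\cong {_F\mc{Q}_F}(a,a)$ is sent, after cyclic permutation and the evaluation (the left action of $\mc{C}$ on $\mc{Q}$ applied to $\id_{F(a)}$), to $q$ in the $F(a)$-summand of $\coprod_c\mc{Q}(c,c)$.

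The only real obstacle is bookkeeping: one must track indices carefully through the coend identifications implicit in $\odot$ and through the cyclic rotation $\theta$. No new conceptual ingredient beyond the explicit formulas for $\eta$, $\epsilon$, and \cref{rsm_simplified} is needed, which is why the result is essentially immediate once the explicit descriptions are in hand.
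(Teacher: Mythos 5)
Your proposal is correct and takes essentially the same route as the paper, whose proof is precisely the remark that the lemma follows by unwinding the explicit coevaluation/evaluation of the dual pair $({_F\mc{C}},\mc{C}_F)$ (together with \cref{rsm_simplified} for the twisted square), which is exactly the element chase you carry out. The only blemish is notational: your intermediate shadows should read $\sh{{_F\mc{C}}\odot\mc{Q}\odot\mc{C}_F}$ and $\sh{\mc{Q}\odot\mc{C}_F\odot{_F\mc{C}}}$ (as written, $\mc{Q}\odot{_F\mc{C}}$ and $\mc{C}_F\odot\mc{Q}$ are not composable, since the $0$-cells do not match), but the chase you then describe uses the correct evaluation and yields the stated commutativity.
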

While unassuming and an immediate consequence of this choice of evaluation and coevaluation for base change dual pairs, this lemma is a fundamental connection between 
traces and maps of hom sets.  The left diagram above implies 
\[
  \chi_{({_F\sC},\sC_F)}(_F\mc{C})=\chi_{({_F\sC},\sC_F)}(\sC_F)=\sh{F}.
\] 
This observation will be used in \cref{cor:base_change_morita_isos,cor:lind_malkiewich,ex:base_change_and_natural_transformation}.  If $F$ is the inclusion of a subcategory, $\tr(\rbm{\epsilon}{F}{\mc{C}}{\mc{Q}})$ is the map on shadows induced by that 
inclusion.

\begin{cor}\label{cor:base_change_morita_isos}
If $F\colon \sA\to \sC$ is a Morita equivalence and $\mc{Q}$ is a $(\mc{C},\mc{C})$-bimodule 
\[
\sh{F} \colon \sh{\mc{A}}\to \sh{\mc{C}}
\quad\text{ and }\quad
\tr\left(\rbm{\epsilon}{F}{\mc{C}}{\mc{Q}}\right)\colon \sh{{_F\mc{Q}_F}}\rightarrow \sh{\mc{Q}}
\] 
are isomorphisms.
\end{cor}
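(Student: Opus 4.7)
Both claims follow almost immediately by combining the Morita invariance results of Section~\ref{morita_equivalence} with the identifications of \cref{lem:base_change_euler_ex}. By hypothesis $F$ is a Morita equivalence, so $({_F\mc{C}}, \mc{C}_F)$ satisfies \cref{defn:morita_equivalence}; by the symmetry of that definition, $(\mc{C}_F, {_F\mc{C}})$ is a Morita equivalence as well, and I will use both orderings.

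For the first statement, the left-hand diagram of \cref{lem:base_change_euler_ex} identifies the map $\sh{F}\colon \sh{U_\mc{A}} \to \sh{U_\mc{C}}$ with the Euler characteristic $\chi_{({_F\mc{C}}, \mc{C}_F)}({_F\mc{C}})$. Applying \cref{euler_char_shadows} to the Morita equivalence $({_F\mc{C}}, \mc{C}_F)$ immediately gives that this Euler characteristic is an isomorphism, with explicit inverse $\chi_{(\mc{C}_F, {_F\mc{C}})}({_F\mc{C}})$. This settles the first claim.

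For the second statement, the plan is to apply \cref{euler_char_shadows_2} to the Morita equivalence $(\mc{C}_F, {_F\mc{C}})$ (with the usual roles of $M$ and $N$ swapped) together with the 1-cell $\mc{Q}$. This requires identifying the composite ${_F\mc{C}} \odot \mc{Q} \odot \mc{C}_F$ with ${_F\mc{Q}_F}$; the two-sided bar construction together with the enriched coYoneda lemma provides a natural equivalence
\[
  ({_F\mc{C}} \odot \mc{Q} \odot \mc{C}_F)(a, a') \;\simeq\; \mc{Q}(F(a), F(a')) \;=\; {_F\mc{Q}_F}(a, a').
\]
Under this identification the map $\rsm{\epsilon}{{_F\mc{C}}}{\mc{C}_F}{\mc{Q}}$ is exactly $\rbm{\epsilon}{F}{\mc{C}}{\mc{Q}}$ by the very definition of the latter, so \cref{prop:tightening} gives $\tr\bigl(\rbm{\epsilon}{F}{\mc{C}}{\mc{Q}}\bigr) = \tr\bigl(\rsm{\epsilon}{{_F\mc{C}}}{\mc{C}_F}{\mc{Q}}\bigr)$ under this isomorphism. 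Then \cref{euler_char_shadows_2} yields that this trace is an isomorphism, with explicit inverse $\tr\bigl(\rsm{\eta}{\mc{C}_F}{{_F\mc{C}}}{\mc{Q}}\bigr)$.

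The only substantive point to verify is the coYoneda identification ${_F\mc{C}} \odot \mc{Q} \odot \mc{C}_F \simeq {_F\mc{Q}_F}$ at the level of derived bar constructions. In the spectral setting this requires the cofibrancy replacements from Section~\ref{sect:thh_spectral}, but once those are in place the identification is pointwise and routine; at the bicategorical level where we are working, it is simply the observation that ${_F\mc{C}}$ and $\mc{C}_F$ witness an equivalence of hom-objects and can be absorbed and expelled as needed.
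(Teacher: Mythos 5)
Your proof is correct and follows the same route as the paper: apply \cref{euler_char_shadows} and \cref{euler_char_shadows_2} to the base change Morita equivalence and then use \cref{lem:base_change_euler_ex} to make the identifications. You are in fact slightly more careful than the paper's substitution table by explicitly using the reversed pair $(\mc{C}_F,{_F\mc{C}})$ when invoking \cref{euler_char_shadows_2} so that ${_F\mc{C}}\odot\mc{Q}\odot\mc{C}_F$ is defined, and by spelling out the coYoneda identification ${_F\mc{C}}\odot\mc{Q}\odot\mc{C}_F\simeq{_F\mc{Q}_F}$ and the appeal to \cref{prop:tightening}, both of which the paper leaves implicit.
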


\begin{proof}
\cref{euler_char_shadows,euler_char_shadows_2} with the substitutions 
\begin{center}
\begin{tabular}{c|c|c}
$M$
&$N$
&$Q$
\\
\hline
$_F\mc{C}$
&$\mc{C}_F$
&$\mc{Q}$
\end{tabular}
\end{center}
imply
\[
\chi_{({_F\sC},\sC_F)}(_F\mc{C})=\chi_{({_F\sC},\sC_F)}(\mc{C}_F)\colon \sh{\mc{A}}\to \sh{\mc{C}}
\quad\text{ and }\quad
\tr\left(\rbm{\epsilon}{F}{\mc{C}}{\mc{Q}}\right)\colon \sh{{_F\mc{Q}_F}}\rightarrow \sh{\mc{Q}}
\] 
are isomorphisms.  The remaining identification follows from \cref{lem:base_change_euler_ex}.
\end{proof}

The following is a crucial example. It is a classical fact for rings \cite[Prop.~2.1.5]{dundas_mccarthy}, and known for spectra \cite{blumberg_mandell} and it provides important motivation for this paper. The example shows that it follows from purely bicategorical facts.

\begin{example}\label{spectral_category_morita_equivalence}
  Let $\mc{B}(\mbf{Cat}(\operatorname{Sp}))$ be the bicategory of spectral categories (with the proper homotopy theoretic considerations, see \cref{bcat_example}) . Let $A$ be a ring spectrum. There are two spectral categories naturally associated with $A$
  \begin{enumerate}
  \item The spectral category $\operatorname{Mod}^c_A$. 
  \item The one object spectral category whose hom spectrum is $A$.  We denote this category $\End_{ \operatorname{Mod}^c_A}(A)$ since it is the full subcategory of  $\operatorname{Mod}^c_A$ with the single object $A$.
  \end{enumerate}
There is an inclusion functor 
\[
  E_A\colon  \End_{ \operatorname{Mod}^c_A}(A) \to \Mod^c_A
\] 
and so we can construct a 1-cell (i.e. a spectral bimodule) $ _{E_A} (\operatorname{Mod}^c_A)$. There is similarly a spectral bimodule  $ (\Mod^c_A)_{E_A}$. These two bimodules are a Morita equivalence.

Applying \cref{cor:base_change_morita_isos} with the substitutions
\begin{center}
  \begin{tabular}{c|c|c}
    $\mc{A}$ & $\mc{C}$ & $F$\\\hline
    $\End_{\operatorname{Mod}^c_A} (A)$ & $\Mod^c_A$ & $E_A$
  \end{tabular}
\end{center}
the map 
\begin{equation}\label{eq:thh_A_mod}
  \thh(A) \cong \thh\left(\End_{\Mod^c_A} (A)\right) \to \thh\left(\Mod^c_A\right)
\end{equation} 
induced by the inclusion of $A$ into $\Mod^c_A$ as a module over itself is an isomorphism.
We will give an explicit description of the inverse of this map on $\pi_0$ in \cref{lem:base_change_euler_inverse}.
\end{example}

\begin{example}\label{ex:twist_thh_morita}
Let $A$ be a ring spectrum,  $Q$ be an $(A,A)$-bimodule, and  
\[
-\sma_AQ\colon  \operatorname{Mod}^c_A \to \operatorname{Mod}^c_A
\] 
be given by $M \mapsto M \sma_A Q$.  Then $(\operatorname{Mod}^c_A)_{-\sma_AQ}$ is 
an  $(\operatorname{Mod}^c_A, \operatorname{Mod}^c_A)$-bimodule and the inclusion 
\[
  E_A\colon  \End_{ \operatorname{Mod}^c_A}(A) \to \operatorname{Mod}^c_A
\] 
defines a $(\End_{ \operatorname{Mod}^c_A}(A),\End_{ \operatorname{Mod}^c_A}(A))$-bimodule  $_{E_A}((\operatorname{Mod}^c_A)_{-\sma_AQ})_{E_A}$.  

Since $E_A$ is a Morita equivalence, applying \cref{cor:base_change_morita_isos} with the substitutions 
\begin{center}
\begin{tabular}{c|c|c|c}
$\mc{A}$
&$\mc{C}$
&$F$
&$\mc{Q}$
\\
\hline
$\End_{\operatorname{Mod}^c_A}(A)$
&$\operatorname{Mod}^c_A$
&$E_A$
&$(\operatorname{Mod}^c_A)_{-\sma_AQ}$
\end{tabular}
\end{center}
gives an isomorphism $\sh{_{E_A}((\operatorname{Mod}^c_A)_{-\sma_AQ})_{E_A}} \to \sh{(\operatorname{Mod}^c_A)_{-\sma_AQ}}$. We also have identifications
  \begin{align*}
    & \sh{ {_{E_A}( (\operatorname{Mod}^c_A)_{-\sma_AQ}})_{E_A}} = \sh{Q} = \thh(A; Q)\\
    &\sh{(\operatorname{Mod}^c_A)_{-\sma_AQ}} = \sh{(\operatorname{Mod}^c_A)_{-\sma_AQ}} = \thh(\operatorname{Mod}^c_A ; {-\sma_AQ}) 
  \end{align*}
  and so an isomorphism
  \[
  \thh(A; Q)\to \thh(\operatorname{Mod}^c_A; {-\sma_AQ})
  \]
  Thus, the classical Morita invariance of $\thh$ holds in a twisted context as well. 
\end{example}

\subsection{Morita equivalence and trace}\label{sec:morita_trace}
We now turn to the comparison of traces across Morita equivalence following \cref{euler_char_shadows,euler_char_shadows_2}.  We start with an example to help motivate the following results.
If $\phi\colon M\to M$ is a endomorphism of a right dualizable $(C,D)$-bimodule the trace of $\phi$ is a 
map 
\[
  \thh{(C)}\to \thh{(D)}.
\]  
We can also consider the functor $\Mod_C^c\to \Mod_D^c$ given by tensoring with $M$ on objects and tensoring with $\phi$ on morphisms.  With the Morita equivalences 
 for $C$ and $\Mod_C^c$ and for  $D$ and $\Mod_D^c$ these give us the following diagram.
\[\xymatrix@C=40pt{
  \thh{(C)}\ar[d]\ar[rr]^-{\tr(\phi)}
  &&\thh{(D)}\ar[d]
  \\
  \thh{(\Mod^c_C)}\ar[r]^-{\thh{( -\otimes\phi)}}
  &\thh{(\Mod^c_C;_{-\otimes_C M} (\Mod_D^c)_{-\otimes_C M} )}\ar[r]^-{\tr\left(\rbm{\epsilon}{-\otimes_CM}{\Mod^c_D}{\Mod^c_D}\right)}
  &\thh{(\Mod^c_D)}
}\] 
At this level of generality we can directly confirm this diagram commutes, but it will be more convenient to prove a significant generalization and then verify that this diagram is a special case.  This generalization (\cref{base_change_one_object}) is one of the main results of the paper and underlies the ideas in \cref{sect:transfer}.

We first fix some notation.

\begin{defn}
If $\mc{C}$ is a category enriched in $\mc{V}$ and $c$ is an object of $\mc{C}$ let $\End_\mc{C}(c)$ denote the  full subcategory of $\mc{C}$ whose single object is $c$.  
Let $E_c\colon \End_\mc{C}(c)\to \mc{C}$ be the inclusion.
\end{defn}

\begin{thm}\label{base_change_one_object}
Let $\sC$ and $\sD$ be  categories enriched over $\sV$, $c$ be an object of $\mc{C}$, $d$ be an object of $\mc{D}$,  $\mc{M}$ be a right dualizable $(\sC,\sD)$-bimodule, 
and suppose $E_d$ is a Morita equivalence.  
Let $\mc{Q}$ be a $(\sC,\sC)$-bimodule, $\mc{P}$ be a $(\mc{D},\mc{D})$-bimodule
and 
\[
  \phi\colon \mc{Q}\odot \mc{M}\to \mc{M}\odot \mc{P} 
\] 
be a natural transformation.  Then 
\begin{equation}\label{eq:base_change_one_object}
 \xymatrix@C=70pt{
  \sh{_{E_c}\mc{Q}_{E_c}}\ar[d]^{\tr\left(\rbm{\epsilon}{E_c}{\mc{C}}{\mc{Q}}\right)}\ar[r]^-{\tr({_{E_c}\mc{C}}\odot \phi\odot\mc{D}_{E_d})}
  &\sh{_{E_d}\mc{P}_{E_d}}\ar[d]^{\tr\left(\rbm{\epsilon}{E_d}{\mc{D}}{\mc{P}}\right)}
  \\
  \sh{\mc{Q}}\ar[r]^-{\tr(\phi)}
  &\sh{\mc{P}}}
\end{equation}
commutes. 
\end{thm}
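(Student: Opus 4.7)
The plan is to recognize the square as a direct instance of \cref{euler_char_shadows_4} under appropriate substitutions, then reinterpret the corners and vertical arrows via the natural isomorphism ${_F\mc{C}}\odot \mc{Q}\odot \mc{C}_F \cong {_F\mc{Q}_F}$ that is implicit in the definition of $\rbm{\epsilon}{F}{\mc{C}}{\mc{Q}}$.

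First, I would set up the substitutions needed to invoke \cref{euler_char_shadows_4}.  Take $M_1 = {_{E_c}\mc{C}}$ with dual $N_1 = \mc{C}_{E_c}$ (a dual pair by \cref{lem:base_change_euler}) and $M_2 = \mc{D}_{E_d}$ with dual $N_2 = {_{E_d}\mc{D}}$ (a Morita equivalence precisely because $E_d$ is one by hypothesis).  Take $L = \mc{M}$, which is right dualizable by assumption, so that $M_1\odot L\odot M_2 = {_{E_c}\mc{C}}\odot \mc{M}\odot \mc{D}_{E_d}$ is defined, and let $Q = \mc{Q}$, $P = \mc{P}$, and $f = \phi$.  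Applying \cref{euler_char_shadows_4} produces a commutative square whose top arrow is $\tr({_{E_c}\mc{C}}\odot \phi\odot\mc{D}_{E_d})$ from $\sh{{_{E_c}\mc{C}}\odot \mc{Q}\odot \mc{C}_{E_c}}$ to $\sh{{_{E_d}\mc{D}}\odot \mc{P}\odot \mc{D}_{E_d}}$, whose bottom arrow is $\tr(\phi)$, and whose vertical arrows are the traces of $\rsm{\epsilon}{_{E_c}\mc{C}}{\mc{C}_{E_c}}{\mc{Q}}$ and $\rsm{\epsilon}{_{E_d}\mc{D}}{\mc{D}_{E_d}}{\mc{P}}$.

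Next, I would identify this square with \eqref{eq:base_change_one_object}.  The natural isomorphism ${_F\mc{C}}\odot \mc{Q}\odot \mc{C}_F \cong {_F\mc{Q}_F}$ (a consequence of the enriched co-Yoneda lemma, and precisely the first step in the factorization of $\rbm{\epsilon}{F}{\mc{C}}{\mc{Q}}$) identifies the source and target of the top arrow with $\sh{{_{E_c}\mc{Q}_{E_c}}}$ and $\sh{{_{E_d}\mc{P}_{E_d}}}$, respectively.  Under these identifications, the trace of $\rsm{\epsilon}{_{E_c}\mc{C}}{\mc{C}_{E_c}}{\mc{Q}}$ becomes the trace of $\rbm{\epsilon}{E_c}{\mc{C}}{\mc{Q}}$ by the very definition of $\rbm{\epsilon}$, and analogously on the right, while the top horizontal arrow retains its description $\tr({_{E_c}\mc{C}}\odot \phi\odot\mc{D}_{E_d})$.

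The main obstacle will be checking the coherence of all these identifications — namely, that the natural isomorphism ${_F\mc{C}}\odot \mc{Q}\odot \mc{C}_F \cong {_F\mc{Q}_F}$ commutes with the bicategorical composition data appearing in the construction of $M_1 \odot f \odot M_2$.  In principle this is a routine bar-construction computation, and the cleanest route is to appeal to \cref{prop:tightening} in order to slide the isomorphism past the horizontal trace; but given how many identifications are involved, some care is required to keep the orientations and variances consistent.
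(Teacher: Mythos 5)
Your proposal is correct and is essentially the paper's own proof: the paper deduces the theorem from \cref{euler_char_shadows_4} with precisely the substitutions you list ($M_1={_{E_c}\mc{C}}$, $N_1=\mc{C}_{E_c}$, $M_2=\mc{D}_{E_d}$, $N_2={_{E_d}\mc{D}}$, $L=\mc{M}$, $Q=\mc{Q}$, $P=\mc{P}$, $f=\phi$). The identification ${_{E_c}\mc{C}}\odot\mc{Q}\odot\mc{C}_{E_c}\cong{_{E_c}\mc{Q}_{E_c}}$ that you verify explicitly is left implicit in the paper, since it is built into the definition of $\rbm{\epsilon}{E_c}{\mc{C}}{\mc{Q}}$.
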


\begin{proof}This is a consequence of 
 \cref{euler_char_shadows_4} with the substitutions 
\begin{center}
\begin{tabular}{c|c|c|c|c|c|c|c}
$M_1$
&$N_1$
&$M_2$
&$N_2$
&$Q$
&$P$
&$L$
&$f$
\\\hline
$_{E_c}\mc{C}$
&$\mc{C}_{E_c}$
&$\mc{D}_{E_d}$
&$_{E_d}\mc{D}$
&$\mc{Q}$
&$\mc{P}$
&$\mc{M}$
&$\phi$
\end{tabular}%\qedhere
\end{center}
\end{proof}

We can now recover a main result of Lind-Malkiewich \cite[Prop.~5.5]{lind_malkiewich}.

\begin{cor}\label{cor:lind_malkiewich}
Let $\sA$ and $\sC$ be  categories enriched over $\sV$, $a$ be an object of $\mc{A}$, $c$ be an object of $\mc{C}$, and  $F\colon \sA\to \sC$ be a $\mc{V}$-functor. 
If  $E_c\colon \End_\mc{C}(c)\to \mc{C}$ is a Morita equivalence, 
\[\xymatrix@C=65pt{
 \sh{\End_\mc{A}(a)}\ar[d]^{\sh{E_a}}\ar[r]^-{\chi({_{F\circ E_a}\sC_{E_c}})}
 &\sh{\End_\mc{C}(c)}\ar[d]^{\sh{E_c}}
 \\
 \sh{\mc{A}}\ar[r]^-{\sh{F}}&\sh{\mc{C}}
}\]
commutes.  
\end{cor}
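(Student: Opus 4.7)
The plan is to specialize \cref{base_change_one_object} to a trivial coefficient system and then identify each edge of the resulting commutative square with the corresponding edge in the diagram of the corollary. Specifically, I would apply \cref{base_change_one_object} with the substitutions
\[
(\mc{C},\mc{D},c,d,\mc{M},\mc{Q},\mc{P},\phi)\;\longmapsto\;(\mc{A},\mc{C},a,c,{_F\mc{C}},U_{\mc{A}},U_{\mc{C}},\id_{_F\mc{C}}).
\]
The 1-cell $_F\mc{C}$ is right dualizable by \cref{lem:base_change_euler}, and the hypothesis that $E_c$ is a Morita equivalence is part of the corollary's assumptions, so the theorem applies. The resulting commutative square has corners $\sh{{_{E_a}}(U_\mc{A})_{E_a}}$, $\sh{{_{E_c}}(U_\mc{C})_{E_c}}$, $\sh{U_\mc{A}}$, and $\sh{U_\mc{C}}$; since ${_{E_a}}(U_\mc{A})_{E_a}=U_{\End_\mc{A}(a)}$ and ${_{E_c}}(U_\mc{C})_{E_c}=U_{\End_\mc{C}(c)}$, these corners agree with those of the diagram in the corollary.

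Next I would identify the four edges of this square. The bottom edge is $\tr(\id_{_F\mc{C}})=\chi({_F\mc{C}})$, which equals $\sh{F}$ by the left-hand diagram of \cref{lem:base_change_euler_ex}. The left vertical $\tr(\rbm{\epsilon}{E_a}{\mc{A}}{U_\mc{A}})$ is identified with $\sh{E_a}$ via the right-hand diagram of \cref{lem:base_change_euler_ex} applied to the functor $E_a$ with coefficient bimodule $U_\mc{A}$, and by the same reasoning applied to $E_c$ the right vertical is $\sh{E_c}$. It then remains to identify the top edge, $\tr({_{E_a}\mc{A}}\odot\id_{_F\mc{C}}\odot\mc{C}_{E_c})$, with $\chi({_{F\circ E_a}\mc{C}_{E_c}})$.

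The main obstacle is this last identification, which requires unpacking the definition of $M_1\odot f\odot M_2$ given just before \cref{euler_char_shadows_3}. The key observation is the standard isomorphism of base-change 1-cells
\[
{_{E_a}\mc{A}}\odot{_F\mc{C}}\odot\mc{C}_{E_c}\;\cong\;{_{F\circ E_a}\mc{C}_{E_c}}
\]
together with the Morita-equivalence isomorphisms ${_{E_a}\mc{A}}\odot U_\mc{A}\odot \mc{A}_{E_a}\cong U_{\End_\mc{A}(a)}$ and ${_{E_c}\mc{C}}\odot U_\mc{C}\odot \mc{C}_{E_c}\cong U_{\End_\mc{C}(c)}$, under which the 2-cell $M_1\odot\id_{_F\mc{C}}\odot M_2$ reduces to the identity 2-cell of ${_{F\circ E_a}\mc{C}_{E_c}}$. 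Its trace is therefore the Euler characteristic $\chi({_{F\circ E_a}\mc{C}_{E_c}})$ by definition, and substituting the four identifications into the square produced by \cref{base_change_one_object} recovers exactly the diagram of the corollary.
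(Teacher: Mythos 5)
Your proof is correct and follows the same route as the paper's: the same substitutions into \cref{base_change_one_object}, followed by \cref{lem:base_change_euler_ex} to identify the vertical and bottom edges. The paper leaves the identification of the top edge $\tr({_{E_a}\mc{A}}\odot\id\odot\mc{C}_{E_c})=\chi({_{F\circ E_a}\mc{C}_{E_c}})$ implicit; your unpacking of the definition of $M_1\odot f\odot M_2$ via the canonical isomorphism ${_{E_a}\mc{A}}\odot{_F\mc{C}}\odot\mc{C}_{E_c}\cong{_{F\circ E_a}\mc{C}_{E_c}}$ supplies precisely that missing detail.
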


Note that ${_{F\circ E_a}\sC_{E_c}}$ is $\mc{C}(F(a),c)$ is as a $(\End_\mc{A}(a), \End_\mc{C}(c))$-bimodule.

\begin{proof}
This follows from \cref{base_change_one_object} with the substitutions
\begin{center}
\begin{tabular}{c|c|c|c|c|c|c|c}
$\mc{C}$
&$\mc{D}$
&$c$
&$d$
&$\mc{M}$
&$\mc{Q}$
&$\mc{P}$
&$\phi$
\\
\hline
$\mc{A}$
&$\mc{C}$
&$a$
&$c$
&$_F\mc{C}$
&$U_\mc{A}$
&$U_\mc{C}$
&$\id$
\end{tabular}
\end{center}
\cref{lem:base_change_euler_ex} identifies the vertical and bottom maps.
\end{proof}

\begin{example}\label{morita_invariance}
Let $C$ and $D$ be  rings and $M$ be a $(C,D)$-bimodule that is  that is finitely generated and projective as an right $D$-module. 

By \cref{cor:lind_malkiewich} with the substitutions 
\begin{center}
\begin{tabular}{c|c|c|c|c}
$\mc{A}$
&$a$
&$\mc{C}$
&$c$
&$F$
\\
\hline
$\Mod_C^c$
&$C$
&$\Mod_D^c$
&$D$
&$-\otimes_CM$
\end{tabular}
\end{center}
following diagram, where the vertical maps are inclusions,  commutes.
\[\xymatrix@C=40pt{
  \sh{{C}}\ar[r]^-{\chi(M)}\ar[d]
  &\sh{D}\ar[d]
  \\
   {\sh{{\operatorname{Mod}^c_C}}}\ar[r]^-{\sh{-\otimes_C M  }}
  &{\sh{{\operatorname{Mod}^c_D}}}
} \]
\end{example}

In later examples the map $\phi$ in \cref{base_change_one_object} is a composite 
\begin{equation}\label{switch_for_trace}
 \mc{Q}\odot \mc{M}\xto{\psi \odot  \id_\mc{M}}\mc{M}\odot  \mc{P}\odot \mc{N} \odot \mc{M}\xto{\rsm{\epsilon}{\mc{M}}{\mc{N}}{\mc{P}}} \mc{M}\odot \mc{P}
\end{equation}
for a map $\psi\colon  \mc{Q}\to \mc{M}\odot \mc{P} \odot \mc{N}$.
Then  \cref{prop:tightening} implies the trace of \eqref{switch_for_trace} is the composite 
$\sh{\mc{Q}}\xto{\sh{\psi}}\sh{\mc{M}\odot \mc{P} \odot \mc{N}}\xto{\tr\left(\rsm{\epsilon}{\mc{M}}{\mc{N}}{\mc{P}}\right)}\sh{\mc{P}}$
and \eqref{eq:base_change_one_object} becomes
\begin{equation}\label{switch_for_trace_diagram}
\xymatrix{
 \sh{{_{E_c}\mc{Q}_{E_c}}}\ar[d]^{\tr\left(\rbm{\epsilon}{E_c}{\mc{C}}{\mc{Q}}\right)}
   \ar[rrr]^-{\tr\left({_{E_c}\mc{C}}\odot \phi\odot \mc{D}_{E_d}\right)}
 &&&\sh{_{E_d}\mc{P}_{E_d}}\ar[d]^{\tr\left(\rbm{\epsilon}{E_d}{\mc{D}}{\mc{P}}\right)}
 \\
 \sh{\mc{Q}}\ar[r]^-{\sh{\psi}}
 &\sh{\mc{M}\odot \mc{P}\odot \mc{N}}\ar[rr]^-{\tr\left(\rsm{\epsilon}{\mc{M}}{\mc{N}}{\mc{P}}\right )} 
 &&\sh{\mc{P}}
 }
\end{equation}

An important example of this is the 2-cells that arise from a natural transformation as in \cref{ex:base_change_and_natural_transformation}.  We first describe how these two cells are defined.
For enriched categories  $\mc{A}$ and $\mc{C}$,  enriched functors
\[
J\colon \mc{A}\to\mc{A}, K\colon \mc{C}\to \mc{C}\text{ and }F\colon \mc{A}\to \mc{C},
\]
 and a natural transformation $\Phi\colon F\circ J\to K\circ F$
let
$\phi\colon \mc{A}_{J}\odot {_F\mc{C}} \to {_F\mc{C}\odot\mc{C}_K }$ be the 2-cell defined by 
\begin{align*}
 (a\xto{\alpha}J(a'),F(a')\xto{\beta}c)&\mapsto (F(a)\xto{F(\alpha)}F(J(a'))\xto{ \Phi_{a'}}K(F(a'))\xto{K(\beta)}K(c))
\end{align*} 
and $\psi\colon \mc{A}_J\to{_F\mc{C}\odot \mc{C}_K\odot \mc{C}_F} $ be 2-cell defined by 
\[
 (a\xto{\alpha} J(a'))\mapsto  (F(a)\xto{F(\alpha)}F(J(a'))\xto{\Phi_{a'}}K(F(a')))
\]
This choice of $\phi$ and $\psi$ are related as in \eqref{switch_for_trace}.

\begin{cor}\label{ex:base_change_and_natural_transformation}
For $\mc{A}$, $\mc{C}$, $J$, $K$, $F$, $\phi$ and $\psi$ as above and objects $a$ of $\mc{A}$ and $c$ of $\mc{C}$, 
the following diagrams, where  vertical maps are induced by inclusions on hom sets, commute.
\[\xymatrix@C=60pt{
  \sh{\mc{A}(a,J(a))} \ar[d] \ar[r]^-{\tr\left({_{E_a}\mc{A}}\odot \phi\odot \mc{C}_{E_{c}}\right)}
  &\sh{\mc{C}(c,K(c))}\ar[d]
  \\
  \sh{\mc{A}_J}\ar[r]^-{\tr(\phi)}
  &\sh{\mc{C}_K}
  }
\hspace{10pt}
\xymatrix@C=5pt{
  \sh{\mc{A}(a,J(a))} \ar[d] \ar[rr]^-{\tr\left({_{E_a}\mc{A}}\odot \phi\odot \mc{C}_{E_{c}}\right)}
  &&\sh{\mc{C}(c,K(c))}\ar[d]
  \\
  \sh{\mc{A}_J}\ar[r]^-{\sh{\psi}}&\sh{_F\mc{C}\odot \mc{C}_K\odot \mc{C}_F}\ar[r] 
  &\sh{\mc{C}_K}
}\]
The remaining unlabeled map is induced by the map 
\[
  \coprod_{a\in \mc{A}}\mc{C}(F(a), K(F(a)))\to \coprod_{c\in \mc{C}}\mc{C}(c, K(c)).
\] 
\end{cor}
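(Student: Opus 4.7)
The plan is to deduce both diagrams from Theorem \ref{base_change_one_object} applied with the substitutions sending $\mc{C}$ to $\mc{A}$, $\mc{D}$ to $\mc{C}$, $c$ to $a$, $d$ to $c$, $\mc{M}$ to ${_F\mc{C}}$, $\mc{Q}$ to $\mc{A}_J$, $\mc{P}$ to $\mc{C}_K$, and $\phi$ to $\phi$. By Proposition \ref{lem:base_change_euler}, $({_F\mc{C}},\mc{C}_F)$ is a dual pair, so ${_F\mc{C}}$ is right dualizable; we further assume the (implicit) hypothesis that $E_c$ is a Morita equivalence, as is needed to invoke the theorem.

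For the first diagram, these substitutions yield ${_{E_a}}(\mc{A}_J)_{E_a}(a,a)=\mc{A}(a,J(a))$ and ${_{E_c}}(\mc{C}_K)_{E_c}(c,c)=\mc{C}(c,K(c))$, so the source and target of \eqref{eq:base_change_one_object} agree with those of the asserted diagram. Applying Lemma \ref{lem:base_change_euler_ex} twice, once with $\mc{Q}=\mc{A}_J$ and once with $\mc{Q}=\mc{C}_K$, identifies the vertical maps $\tr\bigl(\rbm{\epsilon}{E_a}{\mc{A}}{\mc{A}_J}\bigr)$ and $\tr\bigl(\rbm{\epsilon}{E_c}{\mc{C}}{\mc{C}_K}\bigr)$ with the maps on shadows induced by the inclusions of hom sets, completing the first diagram.

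For the second diagram, the key step is to recognize that $\phi$ factors as
\[
  \mc{A}_J\odot {_F\mc{C}}
  \xto{\psi\odot\id_{{_F\mc{C}}}}
  {_F\mc{C}}\odot\mc{C}_K\odot\mc{C}_F\odot {_F\mc{C}}
  \xto{\rsm{\epsilon}{{_F\mc{C}}}{\mc{C}_F}{\mc{C}_K}}
  {_F\mc{C}}\odot\mc{C}_K,
\]
which is exactly the setup of \eqref{switch_for_trace}. One checks this elementwise: the evaluation pairs the identity on $F(a')$ appearing in $\psi(\alpha)$ with the free coordinate $\beta\in\mc{C}(F(a'),c)$, producing the composite $K(\beta)\circ\Phi_{a'}\circ F(\alpha)$ that defines $\phi$. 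Proposition \ref{prop:tightening} then rewrites $\tr(\phi)$ as $\tr\bigl(\rsm{\epsilon}{{_F\mc{C}}}{\mc{C}_F}{\mc{C}_K}\bigr)\circ\sh{\psi}$, turning \eqref{eq:base_change_one_object} into the form \eqref{switch_for_trace_diagram}. Finally, Lemma \ref{lem:base_change_euler_ex} applied with $\mc{Q}=\mc{C}_K$, via the coend-Yoneda identification ${_F(\mc{C}_K)_F}\cong {_F\mc{C}}\odot\mc{C}_K\odot\mc{C}_F$ implicit in the definition of $\rbm{\epsilon}{F}{\mc{C}}{\mc{C}_K}$, identifies the unlabeled trace $\tr\bigl(\rsm{\epsilon}{{_F\mc{C}}}{\mc{C}_F}{\mc{C}_K}\bigr)$ with the map induced by $\coprod_{a\in\mc{A}}\mc{C}(F(a),K(F(a)))\to\coprod_{c\in\mc{C}}\mc{C}(c,K(c))$. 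The main technical obstacle is the elementwise verification of the factorization of $\phi$; once that is in hand, the rest of the proof is direct substitution into previously established results.
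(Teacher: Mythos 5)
Your proof is correct and follows the paper's argument essentially verbatim: you apply \cref{base_change_one_object} with the same substitutions, use the \eqref{switch_for_trace}/\eqref{switch_for_trace_diagram} mechanism (the paper asserts the factorization of $\phi$ through $\psi$ just before the corollary, which you verify elementwise), and identify the remaining vertical and bottom maps via \cref{lem:base_change_euler_ex}. Your remark that the Morita-equivalence hypothesis on $E_c$ is implicitly required matches the paper, whose proof likewise invokes \cref{base_change_one_object} without restating that hypothesis.
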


\begin{proof} 
\cref{base_change_one_object} with the substitutions
\begin{center}
\begin{tabular}{c|c|c|c|c|c|c}
$\mc{C}$
&$c$
&$\mc{D}$
&$d$
&$\mc{M}$
&$\mc{Q}$
&$\mc{P}$
\\
\hline
$\mc{A}$
&$a$
&$\mc{C}$
&$c$
&$_F\mc{C}$
&$\mc{A}_J$
&$\mc{C}_K$
\end{tabular}
\end{center}
 gives the following commutative diagram.
\[\xymatrix@C=70pt{
 \sh{{_{E_a}(\mc{A}_J)_{E_a}}}
  \ar[d]^{\tr\left(\rbm{\epsilon}{E_a}{\mc{A}}{\mc{A}_J}\right)}
  \ar[r]^-{\tr\left({_{E_a}\mc{A}}\odot \phi\odot \mc{C}_{E_{c}}\right)}
 &\sh{_{E_{c}}(\mc{C}_K)_{E_{c}}}\ar[d]^{\tr\left(\rbm{\epsilon}{E_{c}}{\mc{C}}{\mc{C}_K}\right)}
 \\
 \sh{\mc{A}_J}\ar[r]^-{\tr(\phi)}
 &\sh{\mc{C}_K}
}\]
The diagram in \eqref{switch_for_trace_diagram} becomes
\[\xymatrix{
 \sh{{_{E_a}(\mc{A}_J)_{E_a}}}
  \ar[d]^{\tr\left(\rbm{\epsilon}{E_a}{\mc{A}}{\mc{A}_J}\right)}
  \ar[rrr]^-{\tr\left({_{E_a}\mc{A}}\odot \phi\odot \mc{C}_{E_{c}}\right)}
 &&&\sh{_{E_{c}}(\mc{C}_K)_{E_{c}}}\ar[d]^{\tr\left(\rbm{\epsilon}{E_{c}}{\mc{C}}{\mc{C}_K}\right)}
 \\
 \sh{\mc{A}_J}\ar[r]^-{\sh{\psi}}
 &\sh{_F\mc{C}\odot \mc{C}_K\odot \mc{C}_F}
  \ar[rr]^-{\tr\left(\rbm{\epsilon}{F}{\mc{C}_F}{\mc{C}_K}\right )} 
 &&\sh{\mc{C}_K}
}\]
The remaining simplifications follow from  \cref{lem:base_change_euler_ex}.
\end{proof}

For later applications it is convenient to note that ${_{E_a}\mc{A}}\odot \phi\odot \mc{C}_{E_c}$ is 
\begin{align*}
 \mc{A}(a,J(a))\odot \mc{C}(F(a),c) &\to \mc{C}(F(a),K(c))
 \\
 (a\xto{\alpha}J(a),F(a)\xto{\beta}c)&\mapsto (F(a)\xto{F(\alpha)}F(J(a))\xto{ \Phi_a}K(F(a))\xto{K(\beta)}K(c))
\end{align*} 

\begin{example} 
We now return to the example at the beginning of this subsection.  Let $C$ and $D$ be rings, $Q$ be an $(C,C)$-bimodule, $P$ be an $(D,D)$-bimodule, $M$ be an $(C,D)$-bimodule that is finitely generated and projective as an right $D$-module, and let $f\colon Q\otimes_C M\to M\otimes_D P$ be a homomorphism. 

\cref{ex:base_change_and_natural_transformation} with the  substitutions 
\begin{center}
\begin{tabular}{c|c|c|c|c|c|c|c}
$\mc{A}$
&$a$
&$\mc{C}$
&$c$
&$J$
&$K$
&$F$
&$\Phi$
\\
\hline
$\Mod_C^c$
&$C$
&$\Mod_D^c$
&$D$
&${-\otimes_CQ}$
&${-\otimes_DP}$
&${-\otimes_CM}$
&$-\otimes_Cf$
\end{tabular}
\end{center}
implies the bottom square in the following diagram commutes.
\[\xymatrix{
  \sh{Q}	\ar[d]	\ar[rr]^-{\tr(f)}
  &&\sh{P}\ar[d]
  \\
  \sh{\Mod_C^c(C,Q)}
	\ar[d]
	\ar[rr]^-{\tr(f_*)}
  &&\sh{\Mod_D^c(D,P)}\ar[d]
  \\
  \sh{(\Mod_C^c)_{-\otimes_CQ}}\ar[r]^-{\sh{\psi}}
  &\sh{_{-\otimes_CM}(\Mod_D^c)_{-\otimes_CM\otimes_DP}}\ar[r]
  &\sh{(\Mod_D^c)_{-\otimes_DP}}
}\]
 In this example, 
${_{E_C}(\Mod_C^c)}\odot \phi\odot (\Mod_D^c)_{E_D}$ is 
\begin{align*}
 \Mod_C^c(C, Q)\odot \Mod_D^c(M,D) &\to \Mod_D^c(M, P)
 \\
 (C\xto{\alpha} Q,M\xto{\beta}D)
 &\mapsto (C\otimes_CM\xto{\alpha\otimes\id}Q\otimes_AM\xto{ f}M\otimes _DP\xto{\beta\otimes \id}D\otimes_DP)
\end{align*} 
The top square is the identification of $\Mod_C^c(C,Q)$ with $Q$ and $\Mod_D^c(D,P)$ with $P$ and the observation that the trace of the dual of a map is the trace of the map.
\end{example}

\section{Example: Fixed Point Invariants}\label{sect:transfer}
We now return to the motivating example from fixed point theory described in the introduction and prove  \cref{thm:lm_generalization} in \cref{thm:main_theorem}.

Let $R$ be a ring spectrum and $A$, $C$ be $R$-algebras.  A map of $R$-algebras $f\colon  A \to C$ defines an $(A, C)$-bimodule $_fC$.  If 
$_fC$
 is left dualizable, we have 
an adjunction
\[
 C_f \sma_A - \colon  \operatorname{Mod}^{lc}_{(A, R)} \leftrightarrows \operatorname{Mod}^{lc}_{(C, R)} \colon  \ _f C \sma_C - 
\]
between the bimodules that are left-compact (that is, for example, an $(A, R)$-bimodule is compact when considered as an $A$-module). Letting $R$ be the sphere spectrum and taking $\thh$ of both sides of the adjunction gives us maps
\[
\operatorname{res}\colon  \thh(\operatorname{Mod}^c_{A}) \leftrightarrows \thh(\operatorname{Mod}^c_C) \colon  \operatorname{trf} 
\]
which are usually referred to as \textbf{restriction} and \textbf{transfer}. There is a restriction for any map $f$, but there is only a transfer if 
$C$ is compact as an $A$-module. Using Morita invariance 
restriction and transfer  give maps between $\thh(A)$ and $\thh(C)$. 
We should note that the transfer map has appeared in many $\thh$ calculations and seems to provide powerful characteristic-type invariants \cite{schlichtkrull_1, schlichtkrull_2, bentzen_madsen}.

It is well know that the composite
\[
S \cong \thh(S) \xrightarrow{\operatorname{trf}} \thh(\Sigma^\infty_+ \Omega X) \xto{\operatorname{res} }  \thh (S) \simeq S 
\]
is the Euler characteristic.  In this section we show that similar results hold for the generalizations of the Euler characteristics used in fixed point theory.
In particular we show that the spectrum-level Reidemeister trace of the second author \cite{p:thesis,p:coincidence} arises from transfer maps in $\thh$. The transfer maps we use are ``twisted'' by a bimodule coordinate in $\thh$. 

Since transfer maps are nothing more than an example of base change we can apply the results of the previous section.
Taking a more bicategorical perspective,
we have a map
\begin{equation}\label{eq:base_change_trace_1}
U_C \odot   C_f \to   C_f \odot U_A  
\end{equation}
which, upon taking traces gives us a map $\sh{U_C} \to \sh{U_A}$, or $\thh(C) \to \thh(A)$. This is an example of what is referred to as an Euler characteristic above. Similarly,
\begin{equation}\label{eq:base_change_trace_2}
U_A \odot {_f C}\to {_f C} \odot U_C
\end{equation}
gives $\sh{U_A} \to \sh{U_C}$, i.e. $\thh(A) \to \thh(C)$. 
Then the following result  is simply  a restatement of \cref{morita_invariance}.

\begin{prop}\label{prop:lm_final_statement}
  The diagrams
  \[
  \xymatrix{
    \thh(C) \ar[d]^\sim \ar[r]^-{\tr(\ref{eq:base_change_trace_1})} & \thh (A) \ar[d]^\sim 
\\
    \thh(\operatorname{Mod}^c_C) \ar[r]^-{\operatorname{trf}} & \thh(\operatorname{Mod}^c_A)
  }
  \qquad
    \xymatrix{
    \thh(A) \ar[d]^\sim \ar[r]^-{\tr(\ref{eq:base_change_trace_2})} & \thh (C) \ar[d]^\sim 
\\	
    \thh(\operatorname{Mod}^c_A) \ar[r]^-{\operatorname{res}}  & \thh(\operatorname{Mod}^c_C)
  }
  \]
commute.
\end{prop}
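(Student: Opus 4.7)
The plan is to recognize both squares as direct instances of \cref{morita_invariance}, applied in the bicategory of spectral categories rather than ordinary rings. For the transfer square, specialize $C$, $D$, $M$ in \cref{morita_invariance} to $C$, $A$, $C_f$ respectively; for the restriction square, specialize them to $A$, $C$, ${_fC}$. In either case the vertical arrows in \cref{prop:lm_final_statement} are exactly the Morita equivalence isomorphisms produced by \cref{spectral_category_morita_equivalence}, so there is nothing to check there.

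The first step is to identify the top horizontal maps as Euler characteristics. The 2-cell (\ref{eq:base_change_trace_1}) is, after composing with the canonical unit isomorphisms $U_C\odot C_f\cong C_f\cong C_f\odot U_A$, the identity 2-cell of $C_f$, so by \cref{defn:euler_characteristic} its trace is $\chi(C_f)\colon \thh(C)\to \thh(A)$. The analogous identification shows that the trace of (\ref{eq:base_change_trace_2}) is $\chi({_fC})\colon \thh(A)\to \thh(C)$. The left or right dualizability needed for these Euler characteristics is supplied by the compactness hypotheses on ${_fC}$ recorded at the beginning of the section.

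The second step is to identify the bottom horizontal maps. Under the substitutions above, the bottom map of \cref{morita_invariance} is the shadow of $-\sma_C C_f\colon \operatorname{Mod}^c_C\to \operatorname{Mod}^c_A$ in the transfer case and of $-\sma_A {_fC}\colon \operatorname{Mod}^c_A\to \operatorname{Mod}^c_C$ in the restriction case. These are by construction the two halves of the adjunction $C_f\sma_A -\dashv {_fC}\sma_C -$ recalled at the start of the section, so they coincide with the transfer and restriction maps as defined.

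The main point that requires attention is that \cref{morita_invariance} was stated for ordinary rings, so its spectral analog must be invoked in its place. This is already available as \cref{cor:lind_malkiewich} (and, more generally, \cref{base_change_one_object}), together with \cref{lem:base_change_euler_ex}, which identifies the Euler characteristic of a base change 1-cell with the shadow of the corresponding functor. Once these identifications are substituted into the two diagrams, both squares commute for purely formal bicategorical reasons, and no further computation with the cyclic bar construction is required.
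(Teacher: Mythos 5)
Your proposal is correct and follows the paper's intended argument: the paper's proof is literally the one-line observation that the proposition is a restatement of \cref{morita_invariance}, and you have supplied exactly the bookkeeping that makes this precise, namely recognizing the traces of \eqref{eq:base_change_trace_1} and \eqref{eq:base_change_trace_2} as Euler characteristics $\chi(C_f)$ and $\chi({_fC})$ via \cref{defn:euler_characteristic}, identifying the bottom arrows as shadows of the adjoint functors via \cref{lem:base_change_euler_ex}, and noting that while \cref{morita_invariance} is phrased for discrete rings, the underlying \cref{cor:lind_malkiewich} and \cref{base_change_one_object} hold for arbitrary $\mc{V}$-categories and so apply verbatim to spectral categories. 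No gaps.
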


\begin{rmk}
The above was proved in \cite{lind_malkiewich}, as a step in verifying the first-named author's conjecture that the Becker-Gottlieb transfer factors through the $\thh$ transfer. 
\end{rmk}

Thus, 
restriction and transfer maps are  examples of a rather trivial bicategorical trace: the Euler characteristic. It is no coincidence that transfer maps in $\thh$ seem to produce results reminiscent of characteristics. The proposition further gives a very small, relatively computable model for the transfer map. In fact, it gives compact formulas for the usual restriction-transfer compositions

\begin{thm}[Restriction-Transfer]
  \begin{align*}
    \operatorname{res} \circ \operatorname{trf} = \chi(_fC_f) \colon  \thh(A) \to \thh(A)\\
    \operatorname{trf} \circ \operatorname{res} =  \chi((C_f)\odot (_fC))  \colon  \thh(C) \to \thh(C). 
  \end{align*}
\end{thm}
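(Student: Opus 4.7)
The plan is to reduce both equalities to the basic identity $\chi(M_1 \odot M_2) = \chi(M_2) \circ \chi(M_1)$ for the Euler characteristic of a composite of right dualizable 1-cells.

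First, by \cref{prop:lm_final_statement}, the Morita equivalence $\thh(A) \simeq \thh(\operatorname{Mod}^c_A)$ identifies the restriction map with $\chi({_fC})$ and the transfer map with $\chi(C_f)$, where ${_fC}$ and $C_f$ are the base change bimodules regarded as 1-cells in the bicategory of ring spectra and bimodules. In particular, both $\operatorname{res}\circ\operatorname{trf}$ and $\operatorname{trf}\circ\operatorname{res}$ become composites of bicategorical Euler characteristics.

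Second, \cref{lem:composite_dual_pair}, specialized to the identity 2-cells of a composite of two right dualizable 1-cells (taking every auxiliary $Q_i$ to be a unit 1-cell and every $f_i$ to be an identity), gives
\[
  \chi({_fC} \odot C_f) = \chi(C_f) \circ \chi({_fC}) \qquad\text{and}\qquad \chi(C_f \odot {_fC}) = \chi({_fC}) \circ \chi(C_f).
\]
Combining these with the identifications from the first step, together with the natural isomorphism ${_fC} \odot C_f \cong {_fC_f}$ of $(A,A)$-bimodules, yields the two claimed equalities.

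The only point requiring care is the dualizability hypothesis for \cref{lem:composite_dual_pair}: both ${_fC}$ and $C_f$ must be right dualizable. Right dualizability of ${_fC}$ is automatic (the right $C$-action is free), while right dualizability of $C_f$ is precisely the compactness of $C$ as an $A$-module, the standing hypothesis ensuring the transfer exists. With these in hand the argument is purely formal, so I anticipate no real obstacle.
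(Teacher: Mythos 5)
Your approach is exactly the paper's (the paper states this theorem without proof, as an immediate consequence of the discussion preceding it): identify $\operatorname{res}$ and $\operatorname{trf}$ with the Euler characteristics $\chi({_fC})$ and $\chi(C_f)$ via \cref{prop:lm_final_statement}, then apply the multiplicativity of the Euler characteristic from \cref{lem:composite_dual_pair} together with the isomorphism ${_fC}\odot C_f\cong {_fC_f}$. Your dualizability check is also exactly right: $({_fC},C_f)$ is always a dual pair by \cref{lem:base_change_euler}, while right dualizability of $C_f$ is the compactness hypothesis.

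One thing you should have flagged: your derivation does \emph{not} literally yield the equalities as printed in the theorem. With $\operatorname{res}=\chi({_fC})\colon\thh(A)\to\thh(C)$ and $\operatorname{trf}=\chi(C_f)\colon\thh(C)\to\thh(A)$, and using \cref{lem:composite_dual_pair} in the form $\chi(M_1\odot M_2)=\chi(M_2)\circ\chi(M_1)$, you get
\[
  \chi({_fC_f})=\chi({_fC}\odot C_f)=\chi(C_f)\circ\chi({_fC})=\operatorname{trf}\circ\operatorname{res}\colon\thh(A)\to\thh(A),
\]
\[
  \chi(C_f\odot{_fC})=\chi({_fC})\circ\chi(C_f)=\operatorname{res}\circ\operatorname{trf}\colon\thh(C)\to\thh(C),
\]
i.e.\ with $\operatorname{res}$ and $\operatorname{trf}$ swapped relative to the statement. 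This is consistent with the observation that, as stated, the declared source and target $\thh(A)\to\thh(A)$ for $\operatorname{res}\circ\operatorname{trf}$ is incompatible with the directions $\operatorname{res}\colon\thh(\Mod^c_A)\to\thh(\Mod^c_C)$ and $\operatorname{trf}\colon\thh(\Mod^c_C)\to\thh(\Mod^c_A)$ fixed earlier in the section. So the theorem as printed has the two composites transposed; your argument produces the corrected version, and you should say so rather than asserting a literal match.
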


\begin{example}
  We present two examples of these types of restriction-transfer identities
\begin{itemize}
\item Let $X$ be a path-connected space of  the homotopy type of a finite CW-complex. If we let $A=S$ and $C=\Sigma^\infty_+ \Omega X$, then $A$ is compact as a $C$-module.  The composite 
  \[
  S \simeq \thh (S) \xto{ \operatorname{trf}}  \thh (\Sigma^\infty_+ \Omega X) \xto{\operatorname{res} } \thh (S) \simeq S 
  \]
is $\chi(X)$, regarded as an element of $\pi_0(S)$.
\item Let $H$ be a subgroup of $G$ where $[G:H] < \infty$.  Then the base change object associated to the inclusion of the group ring $A = \Z[H]$ into $C = \Z[G]$ is dualizable. The shadow is $\operatorname{HH}_0 (\Z[G], \Z[G])$ and 
  \[
  \operatorname{HH}_0 (\Z[H],\Z[H]) \xto{ \operatorname{trf}}   \operatorname{HH}_0 (\Z[G],\Z[G]) \xto{\operatorname{res} } \operatorname{HH}_0 (\Z[H], \Z[H]) 
  \]
  is multiplication by $[G:H]$. This has an interpretation in terms of the character theory of group representations, since $\operatorname{HH}_0 (\Z[G], \Z[G]) \cong \Z[C_G]$ where $C_G$ denotes the conjugacy classes of $G$. Since maps from $\Z[C_G]$ are exactly class functions, this is the same $[G:H]$ that appears in the classical induction-restriction maps in character theory for representations of finite groups.
\end{itemize}
\end{example}

To extend these ideas to the Reidemeister trace we need an elaboration of the transfer.
For algebras $A$ and $C$, a commuting diagram of maps of algebra
\[\xymatrix{A\ar[r]^j\ar[d]^f&A\ar[d]^f\\C\ar[r]^k&C}\]
defines a commuting diagram of functors
\[\xymatrix{
\Mod_C^c\ar[d]_-{-\sma_C C_f}\ar[r]^-{-\sma_C C_k}&\Mod_C^c\ar[d]^-{-\sma_C C_f}
\\
\Mod_A^c\ar[r]^-{-\sma_A A_j}&\Mod_A^c}
\] and 
 induces maps 
\begin{gather}\label{eq:main_base_change_map} C_k\odot C_f\to C_f\odot A_j.
\\
\label{eq:thh_twist_map}
(\Mod_C^c)_{-\wedge_CC_k}\odot {_{-\wedge_CC_f}(\Mod_A^c)}\to 
{_{-\wedge_CC_f}(\Mod_A^c)}\odot (\Mod_A^c)_{-\wedge_AA_j}
\end{gather} 

\begin{prop}\label{prop:main_prop} Suppose $f$, $j$, and $k$ are as above.   If $C_f$ is right dualizable then 
  the following diagram commutes. 
  \[
  \xymatrix@R=.5cm@C=1.5cm{
    \thh(C;C_k) \ar[r]^-{\tr\eqref{eq:main_base_change_map}}  \ar[d]^\sim& \thh(A; A_j)\ar[d]^\sim\\
    \thh(\operatorname{Mod}^c_C;- \wedge C_k) \ar[r]^-{\tr\eqref{eq:thh_twist_map}}  
  & \thh(\operatorname{Mod}^c_A ; -\wedge A_j).
    }
  \]
\end{prop}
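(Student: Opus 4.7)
The plan is to apply \cref{ex:base_change_and_natural_transformation} directly, with the substitutions $\mc{A}\mapsto \operatorname{Mod}^c_C$, $\mc{C}\mapsto \operatorname{Mod}^c_A$, $a\mapsto C$, $c\mapsto A$, $J\mapsto -\sma_C C_k$, $K\mapsto -\sma_A A_j$, $F\mapsto -\sma_C C_f$, and $\Phi\colon F\circ J\to K\circ F$ the natural transformation whose component at $C$ is the bimodule map \eqref{eq:main_base_change_map}. The hypotheses are immediate: the base change bimodule ${}_F\mc{C}$ is always right dualizable in the spectral bicategory by \cref{lem:base_change_euler} (the explicit assumption that $C_f$ is right dualizable over $A$ is what makes the top trace $\tr\eqref{eq:main_base_change_map}$ in the monoid bicategory well-defined in the first place), and $E_C$, $E_A$ are Morita equivalences by \cref{spectral_category_morita_equivalence}.

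Next I would identify the four corners and the vertical maps of the resulting square with those in the statement. The top corners $\sh{\mc{A}(C,J(C))}$ and $\sh{\mc{C}(A,K(A))}$ have underlying spectra $C_k$ and $A_j$, with the bimodule actions twisted on the right by $k$ and $j$, so their shadows are $\thh(C;C_k)$ and $\thh(A;A_j)$; the bottom corners are $\thh(\operatorname{Mod}^c_C;-\sma_C C_k)$ and $\thh(\operatorname{Mod}^c_A;-\sma_A A_j)$ by definition. The vertical maps, described in \cref{ex:base_change_and_natural_transformation} as being induced by inclusions on hom sets, are precisely the Morita isomorphisms of \cref{ex:twist_thh_morita}. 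The bottom horizontal map is $\tr(\phi)=\tr\eqref{eq:thh_twist_map}$ directly by construction.

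The main obstacle is identifying the top horizontal map with $\tr\eqref{eq:main_base_change_map}$. The explicit formula for ${}_{E_C}\mc{A}\odot \phi\odot \mc{C}_{E_A}$ displayed immediately after \cref{ex:base_change_and_natural_transformation} shows that on the one-object subcategories $\End_{\mc{A}}(C)$ and $\End_{\mc{C}}(A)$ this 2-cell restricts to the bimodule map \eqref{eq:main_base_change_map}. What requires care is checking that the trace of this 2-cell computed in the spectral bicategory (using ${}_F\mc{C}$ and its right dual) agrees with the trace computed in $\mc{B}(\mathbf{Mon}(\Sp))$ (using $C_f$ and its right dual). This compatibility follows because the embedding $\mc{B}(\mathbf{Mon}(\Sp))\hookrightarrow \mc{B}(\mathbf{Cat}(\Sp))$ sending a ring spectrum to its one-object spectral category preserves the coevaluation and evaluation of base change dual pairs, as recorded in \cref{lem:base_change_euler}, as well as the shadow, so it preserves traces.
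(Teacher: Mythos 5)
Your proof is correct and takes essentially the same route as the paper: both invoke \cref{ex:base_change_and_natural_transformation} with the substitutions $\mc{A}=\Mod^c_C$, $\mc{C}=\Mod^c_A$, $J=-\sma_C C_k$, $K=-\sma_A A_j$, $F=-\sma_C C_f$, $\Phi=\id$ (your description of $\Phi$ by its component at $C$ amounts to the same thing once $C_k\odot C_f\cong C_{kf}=C_{fj}\cong C_f\odot A_j$ is noted), and then read off the corners and arrows. The paper's proof is purely the substitution table; you additionally spell out the identifications of the corners with $\thh(C;C_k)$, $\thh(A;A_j)$, etc.\ and flag that the top arrow produced by the corollary is \emph{a priori} a trace over the $(\End_{\Mod^c_C}(C),\End_{\Mod^c_A}(A))$-bimodule $\Mod^c_A(C_f,A)\simeq\Hom_A(C_f,A)$ rather than over $C_f$ itself; the reconciliation is cleanest via \cref{left_or_right_for_trace} ($\tr(g)=\tr(g^\star)$) together with the identification of the one-object spectral subcategories with the ring spectra, rather than only the fact that the monoid-to-category embedding preserves shadows and dual pairs. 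This is a small imprecision in phrasing, not a gap.
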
 

\begin{proof}
This follows from \cref{ex:base_change_and_natural_transformation} with the substitutions
\begin{equation*}
\begin{tabular}{c|c|c|c|c|c}
$\mc{A}$
&$\mc{C}$
&$I$
&$K$
&$F$
&$\Phi$
\\
\hline
$\Mod_C^c$
&$\Mod_A^c$
&$-\sma_C C_k$
&$-\sma_A A_j$
&$-\sma_C C_f$
&$\id$
\end{tabular}
\end{equation*}
\end{proof}

If $k$ is the identity map this reduced to a commutative diagram
  \[
  \xymatrix@R=.5cm@C=1.5cm{
    \thh(C) \ar[r]^-{\tr\eqref{eq:main_base_change_map}}  \ar[d]^\sim
    & \thh(A; A_j)\ar[d]^\sim
   \\
    \thh(\operatorname{Mod}^c_C) \ar[r]^-{\tr\eqref{eq:thh_twist_map}}  
  & \thh(\operatorname{Mod}^c_A ; -\wedge A_j).
    }
  \]
In this case the bottom map is the composite 
\[
  \thh(\operatorname{Mod}^c_C) \to\thh(\operatorname{Mod}^c_A) \to
  \thh(\operatorname{Mod}^c_A ; -\wedge A_j)
\]
where the first map is the transfer and the second map is induced by the map $S\to A_j$.

We now come to one of our main applications, which is simply a particular case of \cref{prop:main_prop} (and so a consequence of \cref{base_change_one_object}). Let $X$ be path-connected space that has the homotopy type of a finite CW-complex, and let $f\colon  X \to X$ be a self map. The map $X \to \ast$ gives a map $\Sigma^\infty_+ \Omega X \to S$ which exhibits $S$ as a compact $\Sigma^\infty_+ \Omega X$-module, and so there is a transfer $S \to \thh(\Sigma^\infty_+ \Omega X)$. Also, $f$ induces a self-map $f\colon  \Sigma^\infty_+ \Omega X \to \Sigma^\infty_+ \Omega X$. 

\begin{thm}\label{thm:main_theorem}
  The Reidemeister trace of $f\colon  X \to X$ is  the $\thh$ transfer
  \[
  S \simeq \thh(S) \to \thh(\Sigma^\infty_+ \Omega X;-\sma \Sigma^\infty_+ \Omega X_f) \simeq \Sigma^\infty_+ \mc{L} X^f 
  \]
\end{thm}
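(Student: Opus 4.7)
The plan is to derive this as an immediate instance of \cref{prop:main_prop}, combined with two further identifications. I apply the proposition with $A = \Sigma^\infty_+ \Omega X$, $C = S$, with the algebra map $p\colon A \to C$ being the augmentation induced by the constant map $X \to \ast$, with $j\colon A \to A$ being the self-map $\Sigma^\infty_+ \Omega f$, and with $k = \id_S$. The required commuting square of algebra maps holds because both $X \to \ast$ and $X \xto{f} X \to \ast$ equal the constant map of $X$.

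Before invoking \cref{prop:main_prop}, I verify its hypothesis that $C_p = S_p$ is right dualizable, equivalently that $S$ is compact as a right $\Sigma^\infty_+ \Omega X$-module. This follows from $X$ having the homotopy type of a finite CW-complex: under the Morita-type equivalence between $\Sigma^\infty_+ \Omega X$-modules and parameterized spectra over $X$, the bimodule $S_p$ corresponds to the fiberwise suspension spectrum of $X$, which is Costenoble-Waner dualizable (the parameterized analogue of Atiyah duality, recalled earlier in the paper). With the hypothesis in hand, \cref{prop:main_prop} produces a commutative square
\[
\xymatrix@C=1.5cm{
  \thh(S) \ar[r]^-{\tr\eqref{eq:main_base_change_map}}\ar[d]^{\sim}
  & \thh(\Sigma^\infty_+\Omega X;\Sigma^\infty_+\Omega X_f) \ar[d]^{\sim}\\
  \thh(\operatorname{Mod}^c_S)\ar[r]
  & \thh(\operatorname{Mod}^c_{\Sigma^\infty_+\Omega X};-\sma \Sigma^\infty_+\Omega X_f)
}
\]
whose bottom row is precisely the twisted $\thh$ transfer appearing in the theorem.

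Two identifications remain. First, the target admits the equivalence $\thh(\Sigma^\infty_+ \Omega X;\, \Sigma^\infty_+ \Omega X_f) \simeq \Sigma^\infty_+ \mc{L}X^f$, a twisted form of B\"okstedt's identification in which free loops are replaced by paths $x \to f(x)$; this is obtained by standard simplicial methods using the cyclic bar construction. Second, and the only real content beyond bicategorical machinery, the bicategorical trace of \eqref{eq:main_base_change_map} in our substitution agrees, under this equivalence, with the Reidemeister trace of $f$. For this I invoke \cite{p:thesis, p:coincidence}, where the Reidemeister trace is itself exhibited as a bicategorical trace, specifically the trace of a 2-cell built from $f$ in the bicategory of parameterized spectra, and then use the naturality of bicategorical traces with respect to the Morita equivalence between parameterized spectra over $X$ and modules over $\Sigma^\infty_+ \Omega X$.

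The main obstacle is this final identification: matching our base-change 2-cell, defined via the algebra self-map $\Sigma^\infty_+ \Omega f$, with the 2-cell whose bicategorical trace is known to compute the Reidemeister trace, defined via the self-map of the fiberwise suspension spectrum of $X$. Both 2-cells arise from $f$ by formally analogous constructions, so the crux is to set up a symmetric monoidal equivalence of bicategories between pointwise-cofibrant $\Sigma^\infty_+ \Omega X$-modules and Costenoble-Waner dualizable parameterized spectra over $X$ that is compatible with base change, and then transport the identification of the Reidemeister trace across this equivalence. Once this compatibility is arranged, the theorem follows by pasting \cref{prop:main_prop} to the two identifications above.
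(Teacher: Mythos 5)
Your overall route is the same as the paper's: identify the Reidemeister trace as a bicategorical trace in the parameterized setting, transport it across the equivalence between parameterized spectra over $X$ and $\Sigma^\infty_+\Omega X$-modules, apply \cref{prop:main_prop} with $A=\Sigma^\infty_+\Omega X$, $C=S$, $f$ the augmentation, $j=\Sigma^\infty_+\Omega f$, $k=\id$ (exactly the paper's substitutions), and finish with the appendix identification \cref{cor:identify_thh_loop}. Your dualizability check for $S_p$ via finiteness of $X$ and Costenoble--Waner/Atiyah duality is also consistent with what the paper asserts.

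The genuine gap is the step you yourself flag as ``the main obstacle'': matching the base-change 2-cell built from the augmentation and $\Sigma^\infty_+\Omega f$ with the parameterized 2-cell whose trace is known to be the Reidemeister trace. You describe what would be needed (``set up a symmetric monoidal equivalence of bicategories \dots compatible with base change, and then transport''), but you neither prove this compatibility nor cite a result supplying it, so the proposal does not actually close the argument. The paper closes exactly this step by quoting existing results rather than constructing anything: the Reidemeister trace is identified with the trace of the specific 2-cell $S\odot S_X\to S_X\odot X_f$ in the parameterized bicategory by \cite[Prop.~3.2.3, Prop.~6.2.2]{p:thesis} and \cite[Thm.~4.1]{p:coincidence}; then the Morita theorem of \cite[Thm.~6.4]{lind_malkiewich_morita}, as applied in \cite[Thm.~5.1]{lind_malkiewich}, gives a bicategorical equivalence (for connected base spaces) which \emph{induces an equivalence of shadows} and under which that 2-cell passes to the bimodule map \eqref{Reidemeister_map_of_modules}; functoriality of the bicategorical trace then identifies the two traces, and \cref{prop:main_prop} takes over. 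Two smaller points: what is needed is not a ``symmetric monoidal'' equivalence but an equivalence of bicategories equipped with shadows (so that traces, not just dualizability, are preserved); and the relevant comparison is of the particular 2-cells \eqref{twisted_map_of_X_f} and \eqref{Reidemeister_map_of_modules} -- equivalently the base-change map \eqref{eq:main_base_change_map} in your substitution -- not a general ``compatibility with base change.'' With the Lind--Malkiewich citation and Ponto's identification in place of your unresolved obstacle, your argument becomes the paper's proof.
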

Recall that twisted $\thh$ was defined in \cref{right_twisted}, and $\mc{L}X^f$ in the introduction --- but see also \cref{defn:twisted_loop_space}. 

\begin{proof}
In \cite[Prop.~3.2.3]{p:thesis} the Reidemeister trace of the introduction is identified with a bicategorical trace.  This trace is identified with a more homotopically satisfying trace in 
\cite[Prop.~6.2.2]{p:thesis} and finally identified with a trace in the parameterized stable homotopy bicategory,  $\operatorname{Ex}$, in \cite[Thm.~4.1]{p:coincidence}.  
  
In the bicategory of parameterized spectra, let $S_X$ denote the fiberwise suspension spectrum of $X$ regarded as a space over $\ast\times X$ via the identity map.  If $f\colon X\to X$ is a continuous map $X_f$ is the  fiberwise suspension spectrum of  $\{(\gamma,x)\in X^I\times X|\gamma(1)=f(x)\}$
regarded as a space over $X\times X$ via the map $(\gamma,x)\mapsto (\gamma(0),x)$.
Then $f$ defines a map of spectra
\begin{equation}\label{twisted_map_of_X_f} 
S \odot S_X \to S_X \odot X_f
\end{equation}
If $X$ is a closed smooth manifold, $S_X$ is right dualizable and the trace of \eqref{twisted_map_of_X_f} 
is a map $\sh{S}\to \sh{X_f}$.  This map is one form of the Reidemeister trace.

Now, we use the Morita equivalence of \cite{lind_malkiewich_morita} to compare the shadows in $\operatorname{Ex}(X, X)$ and $(\Sigma^\infty_+ \Omega X, \Sigma^\infty_+ \Omega X)$-bimodules. Let $\mc{B}\text{imod}_S$ be the bicategory of ring spectra and spectral bimodules, and $\mc{B}\text{imod}^{\text{gp}}_S$ be the full-subcategory of ring spectra of the form $\Sigma^\infty_+ \Omega X$.  Then, \cite[Thm.~6.4]{lind_malkiewich_morita} as applied in \cite[Thm.~5.1]{lind_malkiewich} gives that the parameterized stable homotopy bicategory (when restricted to connected spaces) and $\mc{B}\text{imod}^{\text{gp}}_S$ are bicategorically equivalent with an induced equivalence on shadows. Under this equivalence (given in detail before \cite[Thm.~5.1]{lind_malkiewich}) the map of parameterized spectra in \eqref{twisted_map_of_X_f}  passes to the map of $(S, \Sigma^\infty_+ \Omega X)$-bimodule spectra 
\begin{equation}\label{Reidemeister_map_of_modules}
S \sma_{S} \Sigma^\infty_+ \Omega X \to \Sigma^\infty_+\Omega X \sma_{\Sigma^\infty_+ \Omega X} \Sigma^\infty_+ \Omega X_f.
\end{equation}
The functoriality of the trace \cite{p:thesis} implies the following diagram  commutes.
\[
\xymatrix@C=40pt{
  \sh{S} \ar[d]^{\sim}\ar[r]^{\tr\eqref{twisted_map_of_X_f} } & \sh{X_f}\ar[d]^{\sim} \\
  \sh{S}\ar[r]^-{\tr\eqref{Reidemeister_map_of_modules}} & \sh {\Sigma^\infty_+ \Omega X_f}.
}
\]

Then \cref{prop:main_prop} implies
\[
\xymatrix{
  \sh{S} \ar[d]^{\sim}\ar[r]^-{\tr\eqref{Reidemeister_map_of_modules}}  & \sh {\Sigma^\infty_+ \Omega X_f}\ar[d]^{\sim}\\
  \thh(\operatorname{Mod}^c_S) \ar[r] & \thh(\operatorname{Mod}^c_{\Sigma^\infty_+ \Omega X}; -\wedge \Sigma^\infty_+ \Omega X_f)
}
\]
commutes and 
\cref{cor:identify_thh_loop} identifies \[\thh(\Sigma^\infty_+\Omega X; \Sigma^\infty_+ \Omega X_f)\simeq \Sigma^\infty_+ \mc{L} X^f .\qedhere\]
\end{proof}

\section{A $\pi_0$-level cyclotomic trace}\label{sect:cyclotomic}

Much of the motivation for the present paper comes from very concrete questions about $\thh$. Above we used bicategories to understand the relationship between base change and traces using shadows, which yields information about $\thh$. Here we consider a different, though related, question that will be essential for future work.  Let $A$ be a ring spectrum, and let $f\colon  P \to P$ be an endomorphism of an $A$-module $P$. Then $f$ determines an element of $\pi_0 (\operatorname{THH}(\operatorname{Mod}^c_A))$ as follows. First, it determines a map of modules $S \to \operatorname{End}(P) $ adjoint to $f$. This includes into the zero skeleton of $\thh(\operatorname{Mod}^c_A)$, which is
\[
\operatorname{sk}^0 (\thh(\operatorname{Mod}^c_A)) = \bigvee_{M \in \operatorname{Mod}^c_A} \operatorname{End}(M) = \bigvee_{M \in \operatorname{Mod}^c_A} \operatorname{Mod}^c_A (M, M). 
\]
The zero skeleton includes into $\operatorname{THH}(\operatorname{Mod}^c_A)$. Thus, we have a composite
\begin{equation*}
S \to \operatorname{End}_{\Mod^c_A}(P) \hookrightarrow \bigvee_{M \in\operatorname{Mod}^c_A} \operatorname{Mod}^c_A (M, M) \hookrightarrow \thh(\operatorname{Mod}^c_A)
\end{equation*}
Finally, Morita invariance extends this map to
\begin{equation}\label{easy_trace}
S \to \operatorname{End}_{\Mod^c_A}(P) \hookrightarrow \bigvee_{M \in\operatorname{Mod}^c_A} \operatorname{Mod}^c_A (M, M) \hookrightarrow \thh(\operatorname{Mod}^c_A) \xrightarrow{\sim} \thh(A)
\end{equation}
yielding a map $S \to \operatorname{THH}(A)$, i.e. an element of $\pi_0 \operatorname{THH}(A)$. The question is:  What element of $\pi_0 \operatorname{THH}(A)$ does $f$ determine? Intuitively, the answer should clearly be $\operatorname{tr}(f)$, but it is far from obvious given how we have defined it. It is therefore desirable to have an actual proof of this, and a categorical proof is even better.  This is given in \cref{prop:base_change_euler_monoid}.

\begin{rmk}
  Since traces are additive, the association $[f] \mapsto \operatorname{tr}(f)$ given by the composite \eqref{easy_trace} descends to a map from the Grothendieck group of endomorphisms of $A$-modules, also called $K_0 (\operatorname{End}(A))$. Thus, we obtain a map
  \[
  K_0 (\operatorname{End}(A)) \to \pi_0 \operatorname{THH}(A).
  \]
  Indeed, connoiseurs will recognize \eqref{easy_trace} as the usual ``inclusion of objects'' map that is used to define the cyclotomic trace (see, e.g. \cite{dundas_mccarthy}). We are only defining this at the level of $\pi_0$, so the usual difficulties don't intervene, but this is useful to keep in mind. 
\end{rmk}

Since  \cref{prop:base_change_euler_monoid} is somewhat hard to parse, we motivate it by rephrasing the objects and maps involved in the trace in greater generality. 
\begin{itemize}
\item The Morita equivalence
\[
\thh(\Mod^c_A) \xrightarrow{\sim} \thh(A)
\]
is implemented by a base change dual pair, so the last map in \eqref{easy_trace} is  the Euler characteristic of some $(\mc{C}, \mc{D})$-bimodule $\mc{M}$ for appropriate choices of $\mc{C}, \mc{D}, \mc{M}$. 

\item The middle term in \eqref{easy_trace},  $\bigvee_{M \in \Mod^c_A} \Mod^c_A (M, M)$, is the shadow of a bimodule with trivial actions. That is, we've taken the shadow of the bimodule $\Mod^c_A$ where the usual left and right actions have been base changed to be trivial. 
Then the  map
\[
\bigvee_{M \in \Mod^c_A} \Mod^c_A (M, M) \hookrightarrow \thh(\Mod^c_A)
\]
is the map induced on shadows from $\Mod_A^c$ with trivial left and right actions to $\Mod_A^c$ with the usual $\Mod_A^c$ actions. 

If we rephrase this categorically, using the conventions above, $\Mod_A^c$ is $\mc{C}$ and the base change is implemented by a functor $F': \mc{A}' \to \mc{C}$. The module with trivialized action is $ _{F'} \mc{C}_{F'}$ --- note that this is a $(\mc{A}',\mc{A}')$-bimodule.  Then the third map in \eqref{easy_trace} is the map 
\[\sh{_{F'}\mc{C}_{F'}}\to \sh{\mc{C}}\]
induced by $F'$.
By \cref{lem:base_change_euler_ex} this map is $\sh{_{F'} \mc{C}_{F'}} \xrightarrow{\tr\left(\rbm{\epsilon}{F'}{\mc{C}}{U_\mc{C}}\right)} \sh{\mc{C}}$.

\item In the inclusion
\[
\End_{\Mod^c_A} (P) \hookrightarrow \bigvee_{M} \End_{\Mod^c_A}(M) 
\]
the module $\End_{\Mod^c_A} (P)$ is a shadow obtained by trivializing more of the action. Phrasing this categorically, $\End_{\Mod^c_A} (P)$ is $_F \mc{C}_F$ where $F$ is a composite $\mc{A} \to \mc{A}' \to \mc{C}$. 
\item Finally
\[
S \to \End_{\Mod^c_A} (P)
\]
is simply a map of spectra (i.e. $S$-modules). In the more categorical setting, this is a map of $(\mc{A}, \mc{A})$-bimodules $\mc{Q} \to{ _F \mc{C}_F}$.
\end{itemize}

To summarize, if we have an $(\mc{C}, \mc{D})$-bimodule $\mc{M}$, a functor $F: \mc{A} \to \mc{C}$ and a map of $(\mc{A}, \mc{A})$-bimodules $\mc{Q} \to {_F \mc{C}_F}$, we have a map 
\[
\sh{\mc{Q}} \to \sh{_F \mc{C}_F} \xto{\tr\left(\rbm{\epsilon}{F}{\mc{C}}{U_\mc{C}}\right)} \sh{\mc{C}} \xrightarrow{\chi(\mc{M})} \sh{\mc{D}}. 
\]
The first map comes from the 2-functoriality of shadows.   
Having placed ourself in a purely category theoretic context, we may do this nearly trivially. Many illuminating examples arise from specializations of the category theory including, of course, the main example.

\begin{defn}
If $\mc{M}$ is an $(\mc{C},\mc{D})$-bimodule  and $F\colon \mc{A}\to \mc{C}$ is a functor 
let $\rbmm{\epsilon}{F}{\mc{C}}{{\mc{C}}}{\mc{M}}$ be the $(\mc{A},\mc{D})$-bimodule map given by  composite 
\[_F\mc{C}_F\odot {_F\mc{M}}\xleftarrow{\id_{_F\mc{C}_F}\odot \sim } {_F\mc{C}_F}\odot {_F\mc{C}}\odot \mc{M} \xto{\rbm{\epsilon}{F}{\mc{C}}{U_{\mc{C}}}\odot \id_\mc{M}} {_F\mc{C}}\odot \mc{M}
\xto{\sim } {_F\mc{M}}\]
This map is induced by the action of $\mc{C}$ on $\mc{M}$ restricted to the objects in the image of $F$.
\end{defn}

\begin{prop}\label{prop:base_change_euler_monoid}
If $\mc{M}$ is a right dualizable $(\mc{C},\mc{D})$-bimodule  and $F\colon \mc{A}\to \mc{C}$ is an enriched functor, the trace of $\rbmm{\epsilon}{F}{\mc{C}}{{\mc{C}}}{\mc{M}}$ is the composite 
\[\sh{{_F\mc{C}_F}}\xto{\tr\left(\rbm{\epsilon}{F}{\mc{C}}{U_\mc{C}}\right)}
\sh{\mc{C}}\xto{\chi(\mc{M})} \sh{\mc{D}}.\]
If $\alpha \colon \mc{Q}\to {_F\mc{C}_F}$ is a map of $(\mc{A},\mc{A})$-{\bf bimodules} 
 the trace of 
$\mc{Q}\odot {_F\mc{M}}\xto{\alpha \odot \id_{{_F\mc{M}}}}{_F\mc{C}_F}\odot {_F\mc{M}}\xto{\rbmm{\epsilon}{F}{\mc{C}}{{\mc{C}}}{\mc{M}} } 
{_F\mc{M}}$
is 
\[\sh{\mc{Q}}\xto{\sh{\alpha}} \sh{ {_{F}\mc{C}_{F}}}\xto{\tr\left(\rbm{\epsilon}{F}{\mc{C}}{U_\mc{C}}\right)}\sh{\mc{C}}\xto{\chi(\mc{M})} \sh{\mc{D}}. \]
\end{prop}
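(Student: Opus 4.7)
The plan is to recognize $\rbmm{\epsilon}{F}{\mc{C}}{\mc{C}}{\mc{M}}$ as (up to canonical isomorphisms) the horizontal composite $\rbm{\epsilon}{F}{\mc{C}}{U_\mc{C}}\odot \id_{\mc{M}}$, and then to invoke \cref{lem:composite_dual_pair} to split its trace into the trace of the two factors. Since the second factor is the identity of $\mc{M}$, its trace is by definition $\chi(\mc{M})$, and the first factor provides $\tr\!\left(\rbm{\epsilon}{F}{\mc{C}}{U_\mc{C}}\right)$.

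More precisely, by \cref{lem:base_change_euler}, ${_F\mc{C}}$ is right dualizable with dual $\mc{C}_F$, and $\mc{M}$ is right dualizable by hypothesis. I would apply \cref{lem:composite_dual_pair} with the substitutions
\begin{center}
\begin{tabular}{c|c|c|c|c|c|c}
$Q_1$&$Q_2$&$Q_3$&$M_1$&$M_2$&$f_1$&$f_2$\\\hline
${_F\mc{C}_F}$&$U_\mc{C}$&$U_\mc{D}$&${_F\mc{C}}$&$\mc{M}$&$\rbm{\epsilon}{F}{\mc{C}}{U_\mc{C}}$&$\id_\mc{M}$
\end{tabular}
\end{center}
The composite $(\id_{M_1}\odot f_2)\circ (f_1\odot \id_{M_2})$ is then ${_F\mc{C}_F}\odot {_F\mc{C}}\odot \mc{M}\to {_F\mc{C}}\odot \mc{M}$, which after identifying ${_F\mc{C}}\odot \mc{M}\cong {_F\mc{M}}$ is precisely the map $\rbmm{\epsilon}{F}{\mc{C}}{\mc{C}}{\mc{M}}$ built into the definition. \cref{lem:composite_dual_pair} then identifies the trace as the composite
\[
\sh{{_F\mc{C}_F}}\xto{\tr\left(\rbm{\epsilon}{F}{\mc{C}}{U_\mc{C}}\right)}\sh{U_\mc{C}}\xto{\tr(\id_\mc{M})=\chi(\mc{M})}\sh{U_\mc{D}},
\]
which is the first claim after the identifications $\sh{U_\mc{C}}=\sh{\mc{C}}$ and $\sh{U_\mc{D}}=\sh{\mc{D}}$.

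For the second statement I would apply \cref{prop:tightening} to the right dualizable 1-cell ${_F\mc{M}}$, with $g=\alpha$, $h=\id$, and $f=\rbmm{\epsilon}{F}{\mc{C}}{\mc{C}}{\mc{M}}$. This yields
\[
\tr\!\left(\rbmm{\epsilon}{F}{\mc{C}}{\mc{C}}{\mc{M}}\circ (\alpha\odot \id_{_F\mc{M}})\right)=\tr\!\left(\rbmm{\epsilon}{F}{\mc{C}}{\mc{C}}{\mc{M}}\right)\circ \sh{\alpha},
\]
and substituting the formula from the first part gives the second equation.

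The proof is essentially a bookkeeping exercise; the main (and minor) obstacle is to verify that the canonical isomorphisms ${_F\mc{C}}\odot \mc{M}\cong {_F\mc{M}}$ and ${_F\mc{C}_F}\odot {_F\mc{C}}\cong {_F\mc{C}_F\odot {_F\mc{C}}}$ that are absorbed into the definition of $\rbmm{\epsilon}{F}{\mc{C}}{\mc{C}}{\mc{M}}$ match the identifications used in \cref{lem:composite_dual_pair}, and that ${_F\mc{M}}$ is right dualizable (which follows from right dualizability of $\mc{M}$ and of ${_F\mc{C}}$ together with the fact that composites of right dualizable 1-cells are right dualizable, as recorded in \cref{lem:composite_dual_pair}).
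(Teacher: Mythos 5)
Your proof is correct and takes essentially the same route as the paper: apply \cref{lem:composite_dual_pair} with $f_1 = \rbm{\epsilon}{F}{\mc{C}}{U_\mc{C}}$ and $f_2 = \id_\mc{M}$, identify the resulting composite with $\rbmm{\epsilon}{F}{\mc{C}}{\mc{C}}{\mc{M}}$ up to the canonical isomorphism ${_F\mc{C}}\odot\mc{M}\cong {_F\mc{M}}$, and use \cref{prop:tightening} for the second claim. The only difference is presentational: the paper records the isomorphism-matching step as a commutative square and invokes invariance of the trace under isomorphism of the dualizable 1-cell, whereas you fold it into ``canonical isomorphisms,'' but the content is identical.
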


\begin{proof}Using \cref{lem:composite_dual_pair} $\chi(\mc{M})\circ \tr\left(\rbm{\epsilon}{F}{\mc{C}}{U_\mc{C}}\right)$ is the trace of the top row of the following commutative diagram.
\[\xymatrix@C=60pt{
_F\mc{C}_F\odot {_F\mc{C}}\odot \mc{M}\ar[d]\ar[r]^-{\rbm{\epsilon}{F}{\mc{C}}{U_\mc{C}}\odot \id_\mc{M}}\ar[d]
&  {_F\mc{C}}  \odot \mc{M}\ar[d]
\\
_F\mc{C}_F\odot {_F\mc{M}}\ar[r]^-{\rbmm{\epsilon}{F}{\mc{C}}{\mc{C}}{\mc{M}}}
& {_F\mc{M}}
}\] 
Since the vertical maps are identity maps on twisting objects and an isomorphism on the dualizable object   
the trace of $\rbm{\epsilon}{F}{\mc{C}}{U_\mc{C}}\odot \id_{\mc{M}}$ and $\rbmm{\epsilon}{F}{\mc{C}}{\mc{C}}{\mc{M}}$ are the same.

If $\mc{Q}$ is an $(\mc{A},\mc{A})$-bimodule and $\alpha\colon \mc{Q}\to {_F\mc{C}_F}$ is a module 
homomorphism, then \cref{prop:tightening}  implies 
\[\sh{\mc{Q}}\xto{\sh{\alpha}} \sh{_F\mc{C}_F}\xto{\tr\left(\rbmm{\epsilon}{F}{\mc{C}}{{\mc{C}}}{\mc{M}}\right)} \sh{\mc{D}}\]
is the trace of the composite 
$\mc{Q}\odot {_F\mc{M}} \xto{\alpha\odot \id_{_F\mc{M}}} {_F\mc{C}_F}\odot {_F\mc{M}}\xto{\rbmm{\epsilon}{F}{\mc{C}}{{\mc{C}}}{\mc{M}}}
{_F\mc{M}}$.
\end{proof}

Before we consider examples of this result we need to fix some notation and give an example of \cref{lem:base_change_euler_ex}.
If $\mc{C}$ is  a category enriched  in $\mc{V}$, let $1_{\mc{C}}$ be the category whose objects are the objects of $\mc{C}$ and 
\[1_{\mc{C}}(c,c')=\left\{\begin{array}{cc} 1_\mc{V}&\text{if }c=c'
\\
\emptyset &\text{if } c\neq c'\end{array}\right.\]
The composition map is the unit isomorphism.
Let
$I\colon 1_\mc{C}\to \mc{C}$ be the functor that picks out the identity map for each $c\in \mc{C}$.
If $c$ is an object of $\mc{C}$, and we regard $1_{\mc{V}}$ as a one object category with object $\ast$,  there is a functor $I_c\colon 1_\mc{V}\to 1_\mc{C}\to \mc{C}$ that  picks out the identity map of $c$.
As in \cref{lem:base_change_euler_ex}, 
the following diagrams commute. 
\begin{equation}\label{ex:connect_shadows_endo}
\xymatrix{
\coprod_{c} \mc{C}(I(c),I(c))\ar@{=}[r]
&\coprod_{c} \mc{C}(c,c)\ar[d]
\\
\sh{{_I(U_\mc{C})_I}}\ar[r]^-{\tr\left(\rbm{\epsilon}{I}{\mc{C}}{U_\mc{C}}\right)}\ar@{=}[u]
&\sh{\mc{C}}
}\quad \text{and}\quad \xymatrix{
 \mc{C}(I_c(\ast),I_c(\ast))\ar[r]
&\coprod_{c} \mc{C}(c,c)\ar[d]
\\
\sh{{_{I_c}(U_\mc{C})_{I_c}}}\ar[r]^-{\tr\left(\rbm{\epsilon}{I_c}{\mc{C}}{U_\mc{C}}\right)}\ar@{=}[u]
&\sh{\mc{C}}
}\end{equation}
These commuting squares provide the necessary connection between maps of endomorphisms and traces.

\begin{example}\label{ex_ab_group_base_change}
If $\mc{V}$ is the category of abelian groups and categories $\mc{C}$ and $\mc{D}$ each have  a single object, $\mc{C}$ is a ring $C$,  $\mc{D}$ is a ring $D$.  An $(\mc{C}, \mc{D}$)-bimodule $\mc{M}$ is an $(C,D)$-bimodule $M$.  
If $i\colon \Z\to C$ is the map that picks out the monoidal unit, 
\[\rbm{\epsilon}{i}{C}{C}\colon _iC_i\otimes{ _iC}\to{_iC}\] is the ring multiplication in $C$ and 
\[\rbmm{\epsilon}{i}{C}{C}{M}\colon _iC_i\otimes{ _iM}\to {_iM} \] is the module structure map.  
If $f\colon A\to C$ is a ring homomorphism, and $M$ is an $(C,D)$-bimodule 
\[\rbmm{\epsilon}{i}{A}{A}{_fM}\colon _iA_i\otimes{ _i({_fM})}\to {_i({_fM})} \] 
is given by 
$\rbmm{\epsilon}{i}{A}{A}{_fM}(a,m)=f(a)m$. 

\cref{prop:base_change_euler_monoid} implies the trace of the module structure map $\rbmm{\epsilon}{i}{C}{C}{M}\colon _iC_i\otimes{ _iM}\to {_iM}$ is 
the composite 
\[\sh{_iC_i}\to \sh{C}\xto{\chi(M)} \sh{D}\]
The commuting diagram in \eqref{ex:connect_shadows_endo} identifies the first map as the map $C\to \sh{C}$.

If $\alpha \colon \bZ\to {_iC_i}$ is a module homomorphism,
the composite 
\[M\cong \bZ\otimes {_iM}\xto{\alpha\otimes \id} {_iC_i}\otimes {_iM} \xto{\rbmm{\epsilon}{i}{C}{C}{M}} {_iM}\] is given by $n\mapsto \alpha(1)n$.
While this is very similar to the description of $\rbmm{\epsilon}{i}{A}{A}{_fM}$, since $\alpha$ is a module homomorphism there is greater flexibility in the image of $1$.

\cref{prop:base_change_euler_monoid} implies the trace of 
\[\Z\odot {_iM}\xto{\alpha \odot \id_{_iM}}{_iC_i}\odot {_iM}\xto{\rbmm{\epsilon}{i}{C}{C}{M} } {_iM}\]
is $\sh{\Z}\xto{\sh{\alpha}} \sh{ _iC_i}\xto{\tr(\rbm{\epsilon}{i}{C}{C})}\sh{C}\xto{\chi(M)} \sh{D}$.

\end{example}

While we focused on the case where $\mc{V}$ is the category of abelian groups, \cref{ex_ab_group_base_change} holds as long as objects of $\mc{V}$ have underlying sets.

\begin{cor}
For a functor  $F\colon \mc{A}\to \mc{C}$, a right dualizable $(\mc{C},\mc{D})$-bimodule $\mc{M}$, 
an object $a\in \mc{A}$ and a map of $(1_\mc{V},1_\mc{V})$-{bimodules} 
 $\alpha \colon 1_\mc{V}\to {_{I_a}\mc{A}_{I_a}}$
the trace of 
\[1_\mc{V}\odot {_{F\circ I_a}\mc{M}}\xto{\alpha \odot \id_{{_{F\circ I_a}\mc{M}}}} 
{_{I_a}\mc{A}_{I_a}}\odot {_{F\circ I_a}\mc{M}}
\cong 
{_{I_a}\mc{A}_{I_a}}\odot {_{I_a}({_{F}\mc{M}})}
\xto{\rbmm{\epsilon}{I_a}{\mc{A}}{U_\mc{A}}{_F\mc{M}}}
 {_{I_a}({_{F}\mc{M}})}\cong {_{F\circ I_a} \mc{M}}\]
as a map of $(1_{\mc{V}},\mc{D})$-bimodules is 
\[{1_\mc{V}\xto\alpha
\sh{{_{I_a}\mc{A}_{I_a}}}\xto{\tr(\rbm{\epsilon}{I_a}{\mc{B}}{U_\mc{D}})}
\sh{\mc{A}}\xto{\chi(_F\mc{M})} \sh{\mc{D}}.
}\]
\end{cor}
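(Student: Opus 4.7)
The plan is to apply \cref{prop:base_change_euler_monoid} directly, under the substitution in which the proposition's data are replaced as follows:
\[
  \begin{tabular}{c|c|c|c|c|c}
    $\mc{A}$ & $\mc{C}$ & $\mc{D}$ & $F$ & $\mc{M}$ & $\mc{Q}$ \\ \hline
    $1_\mc{V}$ & $\mc{A}$ & $\mc{D}$ & $I_a$ & $_F\mc{M}$ & $1_\mc{V}$
  \end{tabular}
\]
The 2-cell $\alpha$ of the proposition is taken to be the given bimodule map $\alpha\colon 1_\mc{V}\to {_{I_a}\mc{A}_{I_a}}$. Note the notational subtlety: the original functor $F\colon \mc{A}\to \mc{C}$ from the corollary is absorbed into the substituted bimodule coordinate $_F\mc{M}$, while the role of the proposition's base-change functor is played by $I_a\colon 1_\mc{V}\to \mc{A}$.

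The only hypothesis of \cref{prop:base_change_euler_monoid} that requires verification is that the substituted bimodule $_F\mc{M}$ is right dualizable as an $(\mc{A},\mc{D})$-bimodule. This follows immediately from \cref{lem:composite_dual_pair}: by \cref{lem:base_change_euler} the pair $({_F\mc{C}},\mc{C}_F)$ is dual, and composing this dual pair with the given dual pair exhibiting right dualizability of $\mc{M}$ produces a right dual for $_F\mc{C}\odot \mc{M}\cong {_F\mc{M}}$.

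With the hypothesis verified, the second assertion of \cref{prop:base_change_euler_monoid} specializes to give exactly the displayed trace formula of the corollary, once one performs the evident identifications $_{I_a}({_F\mc{M}})\cong{_{F\circ I_a}\mc{M}}$ (straight from the definitions of iterated base change) and $\sh{1_\mc{V}}\cong 1_\mc{V}$ (implicit in the statement of the corollary, where the source of the displayed composite is written as $1_\mc{V}$ rather than $\sh{1_\mc{V}}$). Similarly, the structure map $\rbmm{\epsilon}{I_a}{\mc{A}}{U_\mc{A}}{_F\mc{M}}$ in the corollary matches the proposition's $\rbmm{\epsilon}{F}{\mc{C}}{\mc{C}}{\mc{M}}$ after substitution. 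The main (and essentially sole) obstacle is therefore notational bookkeeping to keep the two base-change functors $F$ and $I_a$ cleanly distinguished at each appearance; there is no genuine mathematical work beyond the already-proved general proposition.
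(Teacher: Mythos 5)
Your proposal is correct and takes essentially the same route as the paper: the paper's proof consists precisely of citing \cref{prop:base_change_euler_monoid} with the substitutions $1_\mc{V}$, $\mc{A}$, $\mc{D}$, $I_a$, ${_F\mc{M}}$, $1_\mc{V}$ for its $\mc{A}$, $\mc{C}$, $\mc{D}$, $F$, $\mc{M}$, $\mc{Q}$, exactly as you do. The only difference is that you additionally verify the right dualizability of ${_F\mc{M}}\cong{_F\mc{C}}\odot\mc{M}$ via \cref{lem:base_change_euler} and \cref{lem:composite_dual_pair}, a point the paper leaves implicit.
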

\begin{proof}
This follows from \cref{prop:base_change_euler_monoid} with the substitutions
\begin{center}
\begin{tabular}{c|c|c|c|c|c}
$\mc{A}$
&$\mc{C}$
&$\mc{D}$
&$\mc{M}$
&$F$
&$\mc{Q}$
\\
\hline
$1_\mc{V}$
&
$\mc{A}$
&$\mc{D}$
&${_F\mc{M}}$
&$I_a$
&$1_\mc{V}$
\end{tabular}
\end{center}
\end{proof}

\begin{thm} \label{lem:base_change_euler_inverse}
For a Morita equivalence $E_a\colon \End_{\mc{A}}(a)\to \mc{A}$, an object $b$ of $\mc{A}$,  and a map of modules $\alpha\colon 1_{\mc{V}}\to  {_{I_b}\mc{A}_{I_b}}$, the trace of \[1_\mc{V}\odot  {_{I_b} \mc{A}_{E_a}} \xto{\alpha \odot \id} {_{I_b}\mc{A}_{I_b}}\odot  {_{I_b}\mc{A}_{E_a}}  \xto{\rbmm{\epsilon}{I_b}{\mc{A}}{U_\mc{A}}{\mc{A}_{E_a}}}  {_{I_b}\mc{A}_{E_a}} \]
is
the composite 
\[1_{\sV}\xto{\alpha}\mc{A}(b,b)
\to  \sh{\mc{A}}\xto{\chi_{(\mc{A}_{E_a},{_{E_a}\mc{A}})} (\mc{A}_{E_a})} \sh{\End_{\mc{A}}(a)}.\]
 \end{thm}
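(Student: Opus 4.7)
The plan is to recognize this theorem as essentially a direct specialization of \cref{prop:base_change_euler_monoid} combined with the second commutative diagram in \eqref{ex:connect_shadows_endo}. The only nontrivial input beyond these results is the observation that the Morita equivalence hypothesis on $E_a$ gives the required dualizability.

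First I would verify that $\mc{A}_{E_a}$ is right dualizable as an $(\mc{A},\End_{\mc{A}}(a))$-bimodule. This is immediate from the hypothesis that $E_a\colon \End_\mc{A}(a)\to \mc{A}$ is a Morita equivalence, since that means $(\mc{A}_{E_a},{_{E_a}\mc{A}})$ is a dual pair (indeed, both this and its reverse are). This dualizability is what lets us apply \cref{prop:base_change_euler_monoid} in the next step.

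Next I would apply \cref{prop:base_change_euler_monoid} using the substitutions
\begin{center}
\begin{tabular}{c|c|c|c|c|c|c}
$\mc{A}$ & $\mc{C}$ & $\mc{D}$ & $\mc{M}$ & $F$ & $\mc{Q}$ & $\alpha$ \\ \hline
$1_\mc{V}$ & $\mc{A}$ & $\End_\mc{A}(a)$ & $\mc{A}_{E_a}$ & $I_b$ & $1_\mc{V}$ & $\alpha$
\end{tabular}
\end{center}
so that ${_F\mc{M}} = {_{I_b}(\mc{A}_{E_a})} = {_{I_b}\mc{A}_{E_a}}$ and ${_F\mc{C}_F} = {_{I_b}\mc{A}_{I_b}}$. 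The proposition then identifies the trace of the composite in the theorem with
\[
  1_\mc{V}\xto{\alpha}\sh{{_{I_b}\mc{A}_{I_b}}}\xto{\tr\left(\rbm{\epsilon}{I_b}{\mc{A}}{U_\mc{A}}\right)}\sh{\mc{A}}\xto{\chi(\mc{A}_{E_a})}\sh{\End_\mc{A}(a)}.
\]

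Finally I would invoke the second diagram of \eqref{ex:connect_shadows_endo} applied at the object $b$, which identifies $\tr(\rbm{\epsilon}{I_b}{\mc{A}}{U_\mc{A}})\colon \sh{{_{I_b}\mc{A}_{I_b}}}\to \sh{\mc{A}}$ with the natural inclusion $\mc{A}(b,b)\to \sh{\mc{A}}$ into the wedge defining the shadow. Combining the two previous displays rewrites the trace as the composite
\[
  1_\mc{V}\xto{\alpha}\mc{A}(b,b)\to \sh{\mc{A}}\xto{\chi_{(\mc{A}_{E_a},{_{E_a}\mc{A}})}(\mc{A}_{E_a})}\sh{\End_\mc{A}(a)},
\]
which is exactly the stated identification. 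There is no real obstacle: the content of the theorem is the translation of the abstract trace description of \cref{prop:base_change_euler_monoid} into a concrete inclusion of an endomorphism hom-object followed by the Euler characteristic of the Morita base-change module, and the only subtlety is bookkeeping the substitutions and the identification coming from \eqref{ex:connect_shadows_endo}.
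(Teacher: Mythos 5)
Your proposal is correct and follows essentially the same route as the paper: the paper's proof is precisely an application of \cref{prop:base_change_euler_monoid} with the substitutions $\mc{C}=\mc{A}$, $\mc{D}=\End_\mc{A}(a)$, $\mc{M}=\mc{A}_{E_a}$, $F=I_b$, $\mc{Q}=1_\mc{V}$, with the identification of $\tr\left(\rbm{\epsilon}{I_b}{\mc{A}}{U_\mc{A}}\right)$ as the inclusion $\mc{A}(b,b)\to\sh{\mc{A}}$ supplied by \eqref{ex:connect_shadows_endo}, exactly as you do. Your explicit check that the Morita equivalence hypothesis gives the dual pair $(\mc{A}_{E_a},{_{E_a}\mc{A}})$, and your use of $1_\mc{V}$ as the source of the base change (consistent with $\mc{Q}=1_\mc{V}$ and $F=I_b$), only spell out details the paper leaves implicit.
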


\begin{proof}
This follows from  \cref{prop:base_change_euler_monoid} with the substitutions
\begin{center}
\begin{tabular}{c|c|c|c|c|c}
$\mc{A}$
&$\mc{C}$
&$\mc{D}$
&$\mc{M}$
&$F$
&$\mc{Q}$
\\
\hline
$\End_\mc{A}(b)$
&$\mc{A}$
&$\End_\mc{A}(a)$
&$ \mc{A}_{E_a}$
&$I_b$
&$1_\mc{V}$
\end{tabular}
\end{center}
\end{proof}

\begin{example}\label{ex:base_change_euler_monoid_rings}
  Let $\mc{V}$ be the category of abelian groups. For a ring $C$ and a finitely generated projective $C$-module $P$, 
an endomorphism $f\colon P \to P$ is represented by a map of abelian groups $\Z \xto{[f]} \Hom(P, P)$, i.e. a map of $\Z$-modules. Despite the fact that $\Hom(P, P)$ has the structure of a monoid, the map is \textit{not} a map of monoids. 

Applying \cref{lem:base_change_euler_inverse} with the substitutions
\begin{center}
\begin{tabular}{c|c|c|c}
$\mc{A}$
&$a$
&$b$
&$\alpha$
\\
\hline
$\Mod_C^c$
&$C$
&$P$
&$[f]$
\end{tabular}
\end{center}
the composite 
\[\Z\xto{[f]}\Mod_C^c(P,P)
\to  \sh{\Mod_C^c}\xto{\chi_{\left((\Mod_C^c)_{E_C},{_{E_C}(\Mod_C^c)}\right)} \left((\Mod_C^c)_{E_C}\right)} \sh{\End_{\Mod_C^c}(C)}\]
is
the trace of 
\begin{equation}\label{eq:dual_f_map}
\Mod_C^c(P,C) \cong \Z\otimes  \Mod_C^c(P,C)  \xto{[f] \odot \id} \End_{\Mod_C^c}(P) \otimes \Mod_C^c(P,C)   \xto{\rbmm{\epsilon}{I_P}{\Mod_C^c}{U_{\Mod_C^c}}{(\Mod_C^c)_{E_C}}}  \Mod_C^c(P,C) .
\end{equation}
Following \cref{ex_ab_group_base_change} the second map is the action of $\End_{\Mod_C^c}(P) $ on $\Mod_C^c(P,C) $ by composition, and so the 
the image of $\psi\in \Mod_C^c(P,C) $ in $\Mod_C^c(P,C) $  is $\psi\circ f$. So \eqref{eq:dual_f_map}
 is the dual of $f$ and the trace of  \eqref{eq:dual_f_map} is the trace of $f$ as in \cref{left_or_right_for_trace}.
\end{example}

With \cref{lem:base_change_euler_inverse} in place, we can state one of the main theorems of this paper, concerning the relationship between the cyclotomic trace and the bicategorical trace. 

\begin{thm}\label{thm:pi_0_THH}
  Let $R$ be a ring spectrum, and $f\colon M \to M$ an endomorphism of compact $R$-modules. Then the composition
  \[
  S \xrightarrow{f} \operatorname{Mod}^c_R (M, M) \hookrightarrow \bigvee_M \operatorname{Mod}^c_R(M, M) \hookrightarrow \thh (\operatorname{Mod}^c_R) \xrightarrow{\chi} \thh(R)
  \]
  coincides with the bicategorical trace $S \to \thh(R)$ induced by $f\colon M \to M$. 
\end{thm}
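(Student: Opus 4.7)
The plan is to recognize the theorem as a direct application of \cref{lem:base_change_euler_inverse} (generalizing \cref{ex:base_change_euler_monoid_rings} to the spectral setting) combined with the identification of the Morita equivalence $\thh(\operatorname{Mod}^c_R) \xto{\sim} \thh(R)$ from \cref{spectral_category_morita_equivalence} via the inclusion $E_R \colon \End_{\operatorname{Mod}^c_R}(R) \to \operatorname{Mod}^c_R$. The composite in the statement has three structural pieces that I must match to the ingredients of \cref{lem:base_change_euler_inverse}: the map $S \to \operatorname{Mod}^c_R(M,M)$ determined by $f$; the inclusion of the zero skeleton, which by the right diagram in \eqref{ex:connect_shadows_endo} factors through the functor $I_M \colon 1_{\mathcal{V}} \to \operatorname{Mod}^c_R$ as $\tr(\rbm{\epsilon}{I_M}{\operatorname{Mod}^c_R}{U_{\operatorname{Mod}^c_R}})$; and finally the Morita equivalence inverse, which \cref{cor:base_change_morita_isos} identifies as $\chi_{((\operatorname{Mod}^c_R)_{E_R},\,{_{E_R}(\operatorname{Mod}^c_R)})}((\operatorname{Mod}^c_R)_{E_R})$.

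First I would substitute into \cref{lem:base_change_euler_inverse} with $\mathcal{V} = \mathrm{Sp}$, $\mathcal{A} = \operatorname{Mod}^c_R$, $a = R$, $b = M$, and $\alpha = f$. Then the stated composite in the theorem coincides with the left-hand side of \cref{lem:base_change_euler_inverse}, and the conclusion of that lemma identifies it with the trace of
\[
S \odot {_{I_M}(\operatorname{Mod}^c_R)_{E_R}} \xto{f \odot \id} {_{I_M}(\operatorname{Mod}^c_R)_{I_M}} \odot {_{I_M}(\operatorname{Mod}^c_R)_{E_R}} \xto{\,\rbmm{\epsilon}{I_M}{\operatorname{Mod}^c_R}{U_{\operatorname{Mod}^c_R}}{(\operatorname{Mod}^c_R)_{E_R}}\,} {_{I_M}(\operatorname{Mod}^c_R)_{E_R}}.
\]
Next I would unravel the bimodule: ${_{I_M}(\operatorname{Mod}^c_R)_{E_R}}$ is $\operatorname{Mod}^c_R(M,R)$, i.e. the right dual $M^\vee$ of the compact module $M$. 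Exactly as in \cref{ex:base_change_euler_monoid_rings} (which goes through verbatim with rings replaced by ring spectra, since the argument there only uses the fact that $\rbmm{\epsilon}{I_M}{\operatorname{Mod}^c_R}{U_{\operatorname{Mod}^c_R}}{(\operatorname{Mod}^c_R)_{E_R}}$ is composition of morphisms), the displayed map sends $\psi \in \operatorname{Mod}^c_R(M,R)$ to $\psi \circ f$, which is precisely the dual $f^\star$ of the map $f \colon S \odot M \to M \odot S$.

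Finally, by \cref{left_or_right_for_trace}, $\tr(f^\star) = \tr(f)$, and the latter is by definition the bicategorical trace $S \cong \sh{U_{\End_{\operatorname{Mod}^c_R}(R)}} \to \sh{U_{\End_{\operatorname{Mod}^c_R}(R)}}$ associated to $f$, which via the identification $\thh(R) \cong \sh{U_{\End_{\operatorname{Mod}^c_R}(R)}}$ is the bicategorical trace referred to in the statement. Chaining these three identifications gives the theorem.

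The main obstacle will be keeping the bookkeeping transparent: four distinct base-change functors ($I$, $I_M$, $E_R$, and their composites) appear, and one must verify that the inclusion of the zero skeleton really does factor as claimed (this uses the second diagram of \eqref{ex:connect_shadows_endo}) and that the Morita-inverse map on $\pi_0$ is correctly identified with the Euler characteristic of $(\operatorname{Mod}^c_R)_{E_R}$ rather than some other convention. Once those identifications are in place, the proof is just an application of \cref{lem:base_change_euler_inverse} followed by \cref{left_or_right_for_trace}; no new estimates or homotopical input are needed because all of the homotopy-coherence work was absorbed into \cref{thm:thh_shadow} and the statement that $E_R$ is a Morita equivalence.
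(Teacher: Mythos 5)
Your proposal is correct and matches the paper's own argument, which proves the theorem precisely by applying \cref{lem:base_change_euler_inverse} with the substitutions $\mc{A}=\operatorname{Mod}^c_R$, $a=R$, $b=M$, $1_\mc{V}=S$, $\alpha=f$. The additional unpacking you give (the zero-skeleton factorization via \eqref{ex:connect_shadows_endo}, the identification of the Morita inverse with $\chi((\operatorname{Mod}^c_R)_{E_R})$, and the dual-map argument as in \cref{ex:base_change_euler_monoid_rings}) is exactly the preparatory material the paper records before stating the theorem, so no new ideas are needed.
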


\begin{proof}
We use \cref{lem:base_change_euler_inverse} with the substitutions
\begin{center}
\begin{tabular}{c|c|c|c|c}
 $\mc{A}$ 
&$a$
&$b$ 
&$1_\mc{V}$
&$\alpha$
\\
\hline
 $ \operatorname{Mod}^c_R$ 
&$R$ 
&$M$ 
&$S$
&$f$. 
\end{tabular}
\end{center}
\end{proof}

\appendix

\section{Identifying $\mc{L} X^f$ via $\thh$}\label{sect:identify_thh}

This appendix is devoted to performing a computation that is necessary for \cref{thm:main_theorem}. We note that this computation is not strictly necessary since \cref{main_geometric_theorem} can be deduced from structural results. However, the inclusion of this exposition is useful for understanding geometrically \textit{why} the twisted free loop space should be appearing. 

Given a map $f\colon  X \to X$ we identify the twisted $\thh$ spectrum $\thh(\Sigma^\infty_+ \Omega X; \Sigma^\infty_+ \Omega X_f)$ with a twisted free loop space $\mc{L}X^f$. It is probable that computations of this sort for arbitrary self-maps of ring spectra, $g\colon  A \to A$, are interesting; but we content ourselves with the current example. 

It is a classical fact due to Goodwillie \cite{goodwillie_cyclic} that the cyclic bar construction applied to the based loop space is the free loop space: $B^{\text{cy}} (\Omega X) \simeq \mc{L} X$. In modern categories of spectra, the suspension spectrum functor interacts nicely with bar constructions \cite{EKMM}, and so this provides a computation of the topological Hochschild homology of the ``spectral group ring'' $\Sigma^\infty_+ \Omega X$:
\[
\thh (\Sigma^\infty_+ \Omega X) \simeq \Sigma^\infty_+ \mc{L} X 
\]
In the bar construction above, $\Omega X$ is considered as an $(\Omega X, \Omega X)$-bimodule in the obvious way. In what follows, we consider $\Omega X$ as an $(\Omega X, \Omega X)$-bimodule with the action twisted by an endomorphism $f\colon  X \to X$. That is, let $\omega \in \Omega X$ and let $\gamma$ be the path from $\ast$ to $f(\ast)$, then we define the left action of $\omega ' \in \Omega X$ by $\omega' \ast \omega$ and the right action by $\omega \ast \gamma \ast f(\omega') \ast \gamma^{-1}$. Let $\Omega X^f$ be $\Omega X$ with this bimodule structure; we may then ask about $B^{\text{cy}} (\Omega X; \Omega X^f)$. We compute this below and show it to be homotopy equivalent to a twisted version of the free loop space, which is the proper receptacle for the Reidemeister trace.

The computation proceeds mostly as in \cite{goodwillie_cyclic}. We compare the cyclic bar construction as a simplicial space with the singular simplicial space of the twisted free loop space. In order to work with strict topological monoids and bimodules we need to work with Moore path spaces, Moore loop spaces, etc. Also, the introduction of $\gamma$ above in order to transport between loops based at $\ast$ and loops based at $f(\ast)$ is unfortunate and we avoid it below by choosing two different models for $\Omega X$ when it suits us.

\begin{defn}
  The \textbf{free Moore path space} is defined to be 
  \[
  \mc{P}^M X = \{(\gamma, u) \in \operatorname{Map}([0, \infty), X) \times [0, \infty) : \gamma(t) = \gamma(u) \ t \geq u \}.
      \]
     Here $\operatorname{Map}([0,\infty), X)$ is given the compact open topology. The space $\mc{P}^M X$ comes equipped with two maps $s\colon  \mc{P}^M X \to X$ and $t\colon  \mc{P}^M X \to X$ given by evaluation at $0$ and $u$. 
\end{defn}

\begin{defn}
  Let $X$ be a based space with base $\ast$. Then the \textbf{Moore loop space} is 
  \[
  \Omega^M X = \{(\gamma, u) \in \mc{P}^M X : \gamma(0) = \gamma(u) = \ast\} 
  \]
  that is, it is the pullback
  \[
  \xymatrix{
    \Omega^M X \ar[r]\ar[d] & \mc{P}^MX \ar[d]^{s \times t} \\
    \ast\ar[r]  & X \times X 
  }
  \]
  For work below, it is convenient to denote the ``length'' of the path, $u$, by $|\omega|$. 
\end{defn}
For use in the two-sided bar construction, we need another version of this loop space. The following construction allows us to act on the left by loops based at $\ast$ and on the right by loops based at $f(\ast)$.

\begin{defn}\label{defn:twisted_loop_space}
  The \textbf{$f$-twisted Moore loop space} is the topological space 
  \[
  \Omega^M X^f = \{(\gamma, u) \in \mc{P}^M X : \gamma(0) = \ast, \gamma(u) = f(\ast)\}
  \]
  i.e. it is given as a pullback
  \[
  \xymatrix{
    \Omega^M X^f \ar[r]\ar[d] & \mc{P}^M X \ar[d]\\
    \ast \ar[r]^{\operatorname{id} \times f} & X \times X 
  }
  \]
  Again, $|\omega|$ denotes the length of the path $\omega$. 
\end{defn}

  This space $\Omega^M X^f$ is homotopy equivalent to $\Omega^M X$ and has the structure of an \\$(\Omega^M X, \Omega^M X)$-bimodule. For $\alpha \in \Omega^M X$, $\gamma\in \Omega^M X^f$ and $\beta \in \Omega^M X$ 
  \[
  \alpha \cdot \gamma = \alpha \ast \gamma \qquad \gamma \cdot \beta = \gamma \ast f(\beta) 
  \]

  For our definition of the free loop space, it is more convenient to use the naive definition, not the Moore-style definition.  We let $\mc{P}X$ be paths in $X$ and 
$\Omega X$ be loops in $X$.
\begin{defn}
  The \textbf{$f$-twisted free loop space} is defined to be
  \[
  \mc{L} X^f = \{\gamma \in \mc{P} X : \gamma(0) = f(\gamma(1))\} 
  \]
  That is, $\mc{L} X^f$ arises as the pullback
  \[
  \xymatrix{
    \mc{L} X^f \ar[d]\ar[r] & \mc{P} X \ar[d]^{\operatorname{ev}_0 \times \operatorname{ev}_1}\\
    X \ar[r]^{\operatorname{id} \times f} & X \times X 
  }
  \] 
  Consequently, $\mc{L}^f X$ sits in a fibration sequence
  \[
  \Omega X^f \to \mc{L}^f X \to X 
  \]
\end{defn}

We can now define the main object of concern in this section. 

\begin{defn}
  $N^{\text{cy}} (\Omega^M X, \Omega^M X^f)$ is the geometric realization of the simplicial space with $n$-simplices
  \[
  N^{\text{cy}}_n (\Omega^M X, \Omega^M X^f) = \underbrace{\Omega^M X \times \cdots \times \Omega^M X}_{n\ \text{times}} \times \Omega^M X^f
  \]
  The face maps $d_i \colon  N^{\text{cy}}_n \to N^{\text{cy}}_{n-1}$ are given by 
  \[
  d_i (\omega_0, \dots, \omega_{n-1}, \omega) =
  \begin{cases}
    (\omega_0, \dots, \omega_i \omega_{i+1}, \dots, \omega_{n-1}, \omega) & i < n \\
    (\omega_0, \dots, \omega_{n-1} \omega) & i = n \\
    (\omega_1, \dots, \omega_{n-1}, \omega f(\omega_0)) & i = n+1
  \end{cases}
  \]
  The degeneracies $s_j \colon  N^{cy}_{n} \to N^{\text{cy}}_{n+1}$ are given by the insertion of trivial paths:
  \[
    s_j (\omega_0, \dots, \omega_{n-1}, \omega) = (\cdots, \omega_{j}, \ast, \omega_{j+1}, \dots)
  \]
\end{defn}

\begin{rmk}
Level-wise this is equivalent to the cyclic bar construction that computes $\mc{L} X$, but the face maps are different. 
\end{rmk}

In order to proceed with the comparison, we need to define various simplicial spaces. In what follows elements in $\Delta^n$ referred to by the barycentric coordinates $(u_0, \dots, u_n)$. 

The $\Delta^n$ are assembled into a cosimplicial space $\Delta^\bullet$ via the cosimplicial maps
\begin{align*}
d_i (u_0, \dots, u_n) &= (u_0, \dots, u_i + u_{i+1}, \dots, u_n)\\
s_i (u_0, \dots, u_n) &= (u_0, \dots, u_i, 0, u_{i+1}, \dots, u_n) 
\end{align*}

For $\gamma\in \operatorname{Map}(\Delta^n, \mc{P} X) $ and $( u_0, \dots, u_n)\in \Delta^n$, $\gamma_{u_0, \dots, u_n}$ denotes the result of evaluating 
$\gamma$ at $( u_0, \dots, u_n)$.

\begin{defn}
  The space $(\Omega X^f)^{\Delta^n}$ is defined to be 
  \[
  (\Omega X^f )^{\Delta^n} = \left\{\gamma \in \operatorname{Map}(\Delta^n, \mc{P} X) \biggr| \begin{matrix}\gamma_{u_0, \dots, u_n} (0) = \ast, \gamma_{u_0, \dots, u_n}(1) = f(\ast)\\ \text{for all}\ (u_0, \dots, u_n) \in \Delta^n \end{matrix}\right\}
  \]
  The cosimplicial structure on $\Delta^\bullet$ induces a simplicial structure on $(\Omega X^f)^{\Delta^\bullet}$.  The face maps are given by restriction:
  \[
  d_i (\gamma)(t, u_0, \dots, u_n) = \gamma(t, u_0, \dots,u_i, 0, u_{i+1}, \dots, u_n) 
 \]
  where $t$ is the path coordinate and the degeneracy maps are given by addition of coordinates:
  \[
  s_i (\gamma)(t, u_0, \dots, u_n) = \gamma(t, u_0, \dots, u_i + u_{i+1}, \dots, u_n) 
  \]
\end{defn}

\begin{defn}
 The topological space $(\mc{L} X^f)^{\Delta^n}$ is defined to be 
  \[
  (\mc{L} X^f)^{\Delta^n} = \left\{\gamma \in \operatorname{Map}(\Delta^n, \mc{P} X)\biggr|\begin{matrix} \gamma_{u_0, \dots, u_n} (0) = f (\gamma_{u_0, \dots, u_n} (1)) \\ \text{for all}\ (u_0, \dots, u_n) \in \Delta^n \end{matrix}\right\} 
  \]
Again, the cosimplicial structure on $\Delta^\bullet$ induces a simplicial structure on $(\mc{L} X^f)^{\Delta^\bullet}$. The face and degeneracy maps are as above. 
\end{defn}

The idea is now to compare the bar constructions with known fibration sequences.  Consider the following diagram.

\begin{equation}\label{main_diagram}
\xymatrix{
  (\Omega^M X^f)_{\bullet, c} \ar[r]^-{A}\ar[d]_{i} & (\Omega X^f)^{\Delta^\bullet} \ar[d]^{i^{\Delta^\bullet}}\\
  N^{\text{cy}}(\Omega^M X, \Omega^M X^f)\ar[d]_{p}\ar[r]^-{B} & (\mc{L} X^f)^{\Delta^\bullet}\ar[d]^{p^{\Delta^\bullet}}\\
  B_\bullet \Omega^M X \ar[r]^-{C} & X^{\Delta^\bullet} 
}
\end{equation}

The upper left corner is the constant simplicial space $(\Omega^M X^f)_{\bullet,c}$, and the lower left corner is the usual bar construction on the topological monoid $\Omega^M X$. The right side of the diagram is simply the singular simplicial spaces of the fibration $\Omega X^f \to \mc{L} X^f \to X$.

The left column will be a fibration sequence upon geometric realization, the right hand side is trivially so, and we show that all horizontal maps are weak equivalences following \cite{goodwillie_cyclic}. 

We now write out all of the maps in the diagram:

\begin{itemize}
\item The map
\[
A \colon  (\Omega^M X^f)_{n,c} \to (\Omega X^f)^{\Delta^n} 
\]
is given by reparamterizing the loops:
\[
A (\omega) (t, u_0, \dots, u_n) = \omega(t|\omega|) 
\]

\item 
The map
\[
B \colon  N^{\text{cy}}_{n} (\Omega^M X, \Omega^M X^f) \to (\mc{L} X^f)^{\Delta^n}
\]
is given by:
\begin{align*}
  &B(\omega_0, \omega_1, \dots, \omega_{n-1}, \omega)(t, u_0, \dots, u_n) \\
  &= \widetilde{\omega}\biggl(t (|\omega_0| + |\omega_1| + \cdots + |\omega_{n-1}| + |\omega|)\\
  &+ u_0 (|\omega_0| + \cdots + |\omega_{n-1}|)
  + u_1 (|\omega_1| + \cdots + |\omega_{n-1}|) 
  \cdots
  + u_{n-1} (|\omega_{n-1}|)\biggr)
\end{align*} 
where
\[
\widetilde{\omega} = \omega_0 \ast \cdots \ast \omega_{n-1} \ast \omega \ast f(\omega_0) \ast f(\omega_1) \cdots \cdots \ast f(\omega_{n-1}) 
\]

\item The map
  \[
  C\colon  B_n \Omega^M X \to X^{\Delta^n} 
  \]
  is given by
  \begin{align*}
    &C(\omega_0, \dots, \omega_{n-1})(u_0, \dots, u_n)\\ &= (\omega_1 \ast \cdots \ast \omega_n)\biggl( u_0 (|\omega_0| + \cdots + |\omega_{n-1}|)
    +u_1 (|\omega_1| + \cdots + |\omega_{n-1}|)+
    \cdots+
    u_{n} (|\omega_{n-1}|)\biggr)
  \end{align*}

\item The  maps $i^{\Delta^\bullet}$ and $p^{\Delta^\bullet}$ the obvious ones induced by maps on spaces.
\item The map $i_\bullet$ is given level-wise as
  \[
  i_n (\omega) = (\ast, \dots, \ast, \omega)
  \]
  and $p_\bullet$ is given level-wise as
  \[
  p_n (\omega_0, \dots, \omega_{n-1}, \omega_n) = (\omega_0, \dots, \omega_{n-1}) 
  \]
\end{itemize}

The definition of the map, $B$, above is somewhat elaborate. The example in \cref{map_B} hopefully elucidates it.
\begin{figure}
\begin{tikzpicture}
\coordinate (n11) at (4,0);
\node at (n11){$|$};
\coordinate (n12) at (10,0);
\node at (n12){$|$};
\coordinate (n21) at (2,-1);
\node at  (n21) {$|$};
\coordinate (n22) at (8,-1);
\node at (n22){$|$};
\coordinate (n31) at (0,-2);
\node at  (n31) {$|$};
\coordinate (n32) at (6,-2);
\node at (n32){$|$};
\coordinate (n41) at (0,-3);
\node at (n41){$|$};
\coordinate (n42) at (2,-3);
\node at (n42){$|$};
\coordinate (n43) at (4,-3);
\node at (n43){$|$};
\coordinate (n44) at (6,-3);
\node at (n44){$|$};
\coordinate (n45) at (8, -3);
\node at (n45){$|$};
\coordinate (n46) at (10,-3);
\node at (n46){$|$};
\draw (n11)edge [above] node{$u_0=1$} (n12);
\draw (n21)edge [above] node{$u_1=1$} (n22);
\draw (n31)edge [above] node{$u_2=1$} (n32);
\draw (n41)edge [above] node{$\omega_0$} (n42);
\draw (n42)edge [above] node{$\omega_1$} (n43);
\draw (n43)edge [above] node{$\omega$} (n44);
\draw (n44)edge [above] node{$f(\omega_0)$} (n45);
\draw (n45)edge [above] node{$f(\omega_1)$} (n46);
\draw[decoration={brace,mirror, raise=8pt},decorate]
  (n41) -- node[below=12pt]{$\tilde{\omega}$} (n46);
\coordinate (n51) at (1,-9);
\node at (n51){$|$};
\node at (n51)[above=5pt]{$\ast$};
\coordinate (n52) at (3,-7);
\node at (n52) {$|$};
\node at (n52)[above=5pt]{$\ast$};
\coordinate (n53) at (6,-9);
\node at (n53){$|$};
\node at (n53)[above=5pt]{$\ast$};
\coordinate (n54) at (8,-8.75);
\node at (n54){$|$};
\node at (n54)[above=5pt]{$f(\ast)$};
\coordinate (n55) at (8.5, -6);
\node at (n55){$|$};
\node at (n55)[above=5pt]{$f(\ast)$};
\coordinate (n56) at (6,-5);
\node at (n56){$|$};
\node at (n56)[above=5pt]{$f(\ast)$};
\draw (n51)edge [right, out =0, in =180] node{$\omega_0$} (n52);
\draw (n52)edge [left, out =0, in =180 ] node{$\omega_1$} (n53);
\draw (n53)edge [above, out =0, in =200 ] node{$\omega$} (n54);
\draw (n54)edge [right, out =20, in =-50 ] node{$f(\omega_0)$} (n55);
\draw (n55)edge [above, out =130, in =0 ] node{$f(\omega_1)$} (n56);
\end{tikzpicture}
\caption{The map $B$}\label{map_B}
\end{figure}

\begin{example}
  We consider the case when $n = 2$ for the map $B$ above, which is enough to capture the issues. We define
  \[
  \widetilde{\omega} = \omega_0 \ast \omega_1 \ast \omega \ast f(\omega_0) \ast f(\omega_1)
  \]
  and  a family of paths parameterized by the 2-simplex \[\Delta^2 = \{(u_0, u_1, u_2) : u_0 + u_1 + u_2 = 1\}.\] 
We think of this simplex as interpolating between three extremes: the cases when each of $u_0, u_1$ and $u_2$ are 1. We'd like the extreme cases to be
  \begin{itemize}
  \item $u_2 = 1$ --- the path $\omega_0 \ast \omega_1 \ast \omega$
  \item $u_1 = 1$ --- the path $\omega_1 \ast \omega \ast f(\omega_0)$
  \item $u_0 = 1$ --- the path $\omega \ast f(\omega_0) \ast f(\omega_1)$. 
  \end{itemize}
  We would also like that if the path starts at $x$, the endpoint is $f(x)$. Consider the path 
  \begin{align*}
    \widetilde{\omega} (t(|\omega_0| + |\omega_1| + |\omega|) + u_0 (|\omega_0| + |\omega_1|) + u_1 (|\omega_0|))
  \end{align*}
  We can check that when $u_2 = 1$ then $u_0, u_1 = 0$ and the above gives $\widetilde{\omega}(t(|\omega_0| + |\omega_1|+|\omega|))$ which is exactly $\omega_0 \ast \omega_1 \ast \omega$. When $u_1 = 1$, then the above is $\widetilde{\omega}(t(|\omega_0| + |\omega_1|) + |\omega_0|)$, which is the path $\omega_1 \ast \omega \ast f(\omega_0)$. Finally, when $u_0 = 1$ we get the path $\omega \ast f(\omega_0) \ast f(\omega_1)$.

  We also note that for any choice of $u_0, u_1$, 
  \begin{align*}
    &\widetilde{\omega}(u_0 (|\omega_0| + |\omega_1|) + u_1 (|\omega_0|)\\
    &= \widetilde{\omega} (|\omega_0| + |\omega_1| + |\omega| + u_0 (|\omega_0| + |\omega_1|) + u_1 (|\omega_0|))
  \end{align*}
  essentially by definition, so that each path parameterized by $u_0, u_1, u_2$ is in $\mc{L} X^f$. 
\end{example}

There are a few things we need to check about the maps above. First, we should check that $B$ is actually a map to $(\mc{L} X^f)^{\Delta^\bullet}$ --- that is, check that the endpoints are correct. The argument is the same as in the example. 

\begin{lem}
The map $B$ defined above, is well-defined. 
\end{lem}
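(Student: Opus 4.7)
The plan is to verify directly from the formula defining $B$ that, for each $(\omega_0,\ldots,\omega_{n-1},\omega) \in N^{\text{cy}}_n(\Omega^M X, \Omega^M X^f)$ and each $(u_0,\ldots,u_n) \in \Delta^n$, the resulting path $\gamma := B(\omega_0,\ldots,\omega_{n-1},\omega)(-,u_0,\ldots,u_n)$ lies in $\mc{L}X^f$; that is, it satisfies the twisted loop condition $\gamma(1) = f(\gamma(0))$. Continuity in all variables then follows immediately from continuity of Moore concatenation and of the coordinate projections on $\Delta^n$.

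Introduce the abbreviations $S_i = |\omega_0| + \cdots + |\omega_{i-1}|$ (so $S_0 = 0$), $L = S_n + |\omega|$, and $\ell_i = |\omega_i| + \cdots + |\omega_{n-1}| = S_n - S_i$ for $0 \le i \le n-1$, together with $\ell_n := 0$. The argument of $\tilde{\omega}$ prescribed by the formula for $B$ is $tL + s$, where $s = \sum_{i=0}^{n} u_i \ell_i$. Since $\ell_0 \ge \ell_1 \ge \cdots \ge \ell_n = 0$ and $\sum u_i = 1$, the quantity $s$ lies in $[0, \ell_0] = [0, S_n]$. In particular, $s$ always falls in the range covered by the ``untwisted'' initial segment $\omega_0 \ast \cdots \ast \omega_{n-1}$ of $\tilde{\omega}$, and never strays into the middle segment $\omega$. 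Consequently, the shifted argument $L + s$ lies in $[L, L + S_n]$, which---using that $f$ preserves Moore lengths---is precisely the range covered by the ``twisted'' tail segment $f(\omega_0) \ast \cdots \ast f(\omega_{n-1})$.

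The verification then collapses to a one-line computation. Fix $i$ so that $s \in [S_i, S_{i+1}]$. By the definition of Moore concatenation, $\tilde{\omega}(s) = \omega_i(s - S_i)$ and $\tilde{\omega}(L + s) = f(\omega_i)(s - S_i) = f\bigl(\omega_i(s - S_i)\bigr) = f\bigl(\tilde{\omega}(s)\bigr)$, giving $\gamma(1) = f(\gamma(0))$ as required. The transition points $s = S_i$ are unproblematic because strict associativity of Moore concatenation makes $\tilde{\omega}$ single-valued there. I expect the only real obstacle to be the bookkeeping in the previous paragraph---in particular, noting that the coefficient of $u_n$ in the argument of $\tilde{\omega}$ is zero (equivalently, $\ell_n = 0$), which is precisely what confines $s$ to $[0, S_n]$ and allows $s$ and $L + s$ to land in matching ``$\omega_i$-indexed'' and ``$f(\omega_i)$-indexed'' cells of $\tilde{\omega}$.
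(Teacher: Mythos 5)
Your proof is correct and takes the same approach as the paper, which simply refers back to the $n=2$ computation in the example preceding the lemma: write out the endpoint relation from the cell structure of the Moore concatenation $\widetilde{\omega}$ and observe that the offset $s$ stays within the first $S_n = |\omega_0| + \cdots + |\omega_{n-1}|$ of parameter, so $s$ and $L + s$ land in matching $\omega_i$- and $f(\omega_i)$-cells. In fact your write-up cleans up a slip in that example: the paper asserts literal equality $\widetilde{\omega}(s) = \widetilde{\omega}(L + s)$, whereas the correct relation is the one you derive, $\widetilde{\omega}(L + s) = f\bigl(\widetilde{\omega}(s)\bigr)$, i.e.\ $\gamma(1) = f(\gamma(0))$, which is what the pullback square defining $\mc{L}X^f$ (and the description in the introduction) actually requires, even though the set-builder formula in that definition is printed with $f$ on the other side.
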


Second, it is easy to observe that the diagram actually commutes. Third, it is clear that $A$ and $C$ are simplicial maps. Though it is an irritating exercise, it is easy to check that $B$ is as well. 

\begin{prop}
$ B \colon  N^{\text{cy}} (\Omega^M X, \Omega^M X^f) \to (\mc{L} X^f)^{\Delta^\bullet}$ is a simplicial map.
\end{prop}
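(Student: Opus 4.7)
The plan is to verify simpliciality of $B$ by direct calculation on generators: there are two families of identities to check, one for face maps and one for degeneracies, and in each case both sides of the purported equality are determined by evaluating an explicit concatenated Moore path at prescribed parameter values, so the proof reduces to matching up these parameters.

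First I would dispense with the degeneracies, since these are essentially trivial. The degeneracy $s_j$ on $N^{\text{cy}}_n(\Omega^M X, \Omega^M X^f)$ inserts a constant loop $\ast$ of Moore length $0$ into position $j$. This does not change the total concatenated path $\widetilde{\omega}$ (up to the standard identification of paths under insertion of length-zero constants), and because $|\ast|=0$ the coefficients appearing in the parameterizing formula for $B$ are unchanged except that an extra coordinate $u_j$ appears whose coefficient coincides with that of $u_{j+1}$. This is precisely the effect of the degeneracy $s_j$ in $(\mc{L} X^f)^{\Delta^\bullet}$, which sums adjacent coordinates. So $B\circ s_j = s_j \circ B$ on the nose.

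Next I would treat the non-cyclic face maps $d_i$ for $i<n$ and $i=n$. Here the left-hand face map replaces $(\omega_i,\omega_{i+1})$ by the concatenation $\omega_i\omega_{i+1}$ (resp.\ replaces $(\omega_{n-1},\omega)$ by $\omega_{n-1}\omega$), so the underlying concatenated path $\widetilde{\omega}$ is unchanged, and the lengths satisfy $|\omega_i\omega_{i+1}|=|\omega_i|+|\omega_{i+1}|$. In the formula defining $B$, the parameter value becomes
\[
t(|\omega_0|+\cdots+|\omega|) + \sum_{k\neq i+1} u_k\,\bigl(|\omega_k|+\cdots+|\omega_{n-1}|\bigr),
\]
which is exactly $B(\omega_0,\dots,\omega_{n-1},\omega)$ evaluated at $(t,u_0,\dots,u_i,0,u_{i+2},\dots,u_n)$. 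This is the definition of $d_i$ applied to $B(\omega_0,\dots,\omega)$, so these cases commute.

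The main obstacle is the cyclic face map $d_{n+1}$, whose left-hand output is $(\omega_1,\dots,\omega_{n-1},\omega\,f(\omega_0))$; this is where the $f$-twisted bimodule structure on $\Omega^M X^f$ genuinely enters. Here $\widetilde{\omega}$ does change: on the right the first segment $\omega_0$ is removed from the front, but because the concatenation closes up with $f(\omega_0)\ast f(\omega_1)\ast\cdots$ on the tail, the result is the \emph{same} doubly-long path as before, merely traversed starting at position $|\omega_0|$. The key identity to check is that for all $(u_0,\dots,u_n)$,
\[
\widetilde{\omega}\bigl(t\,T + u_0 L_0 + \cdots + u_{n-1} L_{n-1} + |\omega_0|\bigr) = \widetilde{\omega'}\bigl(t\,T + u'_0 L'_0 + \cdots + u'_{n-1} L'_{n-1}\bigr),
\]
where the primed quantities correspond to the cyclically permuted tuple and the total length $T$ is preserved. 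Using the cyclicity of the closed-up path $\widetilde{\omega}$ (the endpoint condition $\widetilde{\omega}(0)=\ast$, $\widetilde{\omega}(T)=f(\ast)$ together with $f$-equivariance of the suffix $f(\omega_0)\cdots f(\omega_{n-1})$) this matches the face map $d_0$ (or whichever index the cyclic generator corresponds to) on $(\mc{L}X^f)^{\Delta^n}$. With this identity in hand, the cyclic face-map compatibility follows, and combined with the previous cases we conclude that $B$ is a simplicial map.
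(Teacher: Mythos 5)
The paper itself gives no proof for this proposition beyond the remark that it is ``an irritating exercise,'' so you are supplying detail the paper omits; your overall decomposition (degeneracies, interior faces, cyclic face) is exactly the right way to organize the verification, and the interior faces and degeneracies are indeed routine once one notes that inserting a length-zero constant or concatenating $\omega_i\omega_{i+1}$ changes neither $\widetilde{\omega}$ nor the total length $T$.

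The gap is in the cyclic face, which is the only place the $f$-twist genuinely matters. You correctly observe that $\widetilde{\omega'}(s)=\widetilde{\omega}(s+|\omega_0|)$ and that $T$ is preserved, but your ``key identity'' is not right as written: it sets $\widetilde{\omega}(tT + \sum u_k L_k + |\omega_0|)$ equal to $\widetilde{\omega'}(tT + \sum u'_k L'_k)$ while leaving the relation between $(u_k)$ and $(u'_k)$, and between $(L_k)$ and $(L'_k)$, unspecified; substituting the shift relation turns the right side into $\widetilde{\omega}(tT + \sum u'_k L'_k + |\omega_0|)$, so the identity reduces to $\sum u_k L_k = \sum u'_k L'_k$, which is false coefficientwise. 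The actual mechanism is different and you never invoke it: after applying $B$ to the cyclically rotated tuple, each coefficient drops by $|\omega_0|$, producing a term $-(\sum_{k<n-1} v_k)|\omega_0|$; one must then use the barycentric constraint $\sum v_k = 1$ to rewrite this as $(v_{n-1}-1)|\omega_0|$, at which point the $-|\omega_0|$ cancels against the $+|\omega_0|$ coming from the shift $\widetilde{\omega'}(s)=\widetilde{\omega}(s+|\omega_0|)$ and the leftover $v_{n-1}|\omega_0|$ exactly supplies the ``missing'' coefficient in the restriction of $B(\omega_0,\dots,\omega)$ to the appropriate face of $\Delta^n$. Without this redistribution step there is no cancellation. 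Your appeal to the $f$-equivariance $\widetilde{\omega}(s+T)=f(\widetilde{\omega}(s))$ is also misplaced for this purpose: that identity is what guarantees the image of $B$ lands in $(\mc{L}X^f)^{\Delta^\bullet}$ (well-definedness), not what makes $B$ simplicial; simpliciality at the cyclic face is a statement about Moore-length bookkeeping and the barycentric constraint, with $f$ playing no role beyond equality of lengths $|f(\omega_i)|=|\omega_i|$.
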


The rest of the proof proceeds as in \cite[Sect.~V]{goodwillie_cyclic}. Upon geometric realization, the left hand column of \ref{main_diagram} becomes a fibration up to homotopy by \cite{segal}, as does the right hand column. To prove that $B$ is a weak equivalence, it thus suffice to prove that $A, C$ are weak equivalences. This is done in \cite[Sect.~V]{goodwillie_cyclic}.

All of this work entitles us to the following theorem. 

\begin{thm}\label{main_geometric_theorem}
  Let $X$ be a (connected) topological space. Then
  \[
  B^{\text{cy}}(\Omega^M X, \Omega^M X^f) \simeq \mc{L} X^f 
  \]
\end{thm}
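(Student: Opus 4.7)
The plan is to show that the middle horizontal map $B$ in the diagram \eqref{main_diagram} induces a weak equivalence on geometric realizations, by comparing the two vertical columns, each of which is a fibration sequence upon realization, and showing the top and bottom horizontal maps are weak equivalences.

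First I would verify the commutativity of \eqref{main_diagram} by a direct (if tedious) check using the explicit formulas for $A$, $B$, $C$, $i$, $p$, $i^{\Delta^{\bullet}}$, $p^{\Delta^{\bullet}}$, together with the fact that $B$ is a simplicial map (which the authors record just before the theorem). The diagram then expresses a comparison between a simplicial construction involving topological monoids acting on a twisted bimodule on the left, and the singular simplicial version of the ordinary fiber bundle $\Omega X^f \to \mathcal{L}X^f \to X$ on the right.

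Next I would argue that each column, upon geometric realization, yields a homotopy fibration. The right column is essentially a fibration of singular simplicial spaces associated with the pullback fibration defining $\mathcal{L}X^f$, so this is automatic. For the left column, one invokes a theorem of Segal (as in \cite{segal} and used in \cite{goodwillie_cyclic}) that guarantees that the bar construction on a well-pointed topological monoid with a well-pointed bimodule yields a fibration sequence on realization; here $\Omega^M X$ is a strict topological monoid (thanks to the Moore construction) and $\Omega^M X^f$ is a strict $(\Omega^M X, \Omega^M X)$-bimodule, with trivial insertion as basepoint.

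Then I would show that the outer horizontal maps $A$ and $C$ are weak equivalences. For $A$, this is the standard fact that the Moore loop space is weakly equivalent to its singular simplicial version after reparametrization, and the twist by $f$ affects only the endpoint constraint, which is preserved by the reparametrization. For $C$, the classical argument of \cite[\S V]{goodwillie_cyclic} identifies the bar construction of the Moore loop space with the singular simplicial space of $X$. With these two weak equivalences established and the two fibration structures in place, the five-lemma applied to the long exact sequences in homotopy (or the comparison theorem for fibration sequences) forces $|B|$ to be a weak equivalence, which is the claimed identification $B^{\mathrm{cy}}(\Omega^M X, \Omega^M X^f) \simeq \mathcal{L}X^f$.

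The main obstacle, and the only place where real content beyond a routine adaptation of \cite{goodwillie_cyclic} enters, is checking that $B$ is a well-defined simplicial map with the \emph{twisted} face map $d_{n+1}(\omega_0,\dots,\omega_{n-1},\omega)=(\omega_1,\dots,\omega_{n-1},\omega f(\omega_0))$ matching the face in $(\mathcal{L}X^f)^{\Delta^{\bullet}}$. This is exactly the point where the twist $f$ forces the use of $\widetilde{\omega} = \omega_0 \ast \cdots \ast \omega_{n-1} \ast \omega \ast f(\omega_0) \ast \cdots \ast f(\omega_{n-1})$ rather than a naive concatenation. The illustrative computation for $n=2$ given after the definition of $B$ captures the key identity needed for the cyclic face map, and the general case follows by the same bookkeeping of the lengths $|\omega_i|$.
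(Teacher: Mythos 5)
Your proposal follows the same strategy as the paper: set up the commutative diagram comparing the cyclic bar construction to the singular simplicial space of the fibration $\Omega X^f \to \mathcal{L}X^f \to X$, show both columns realize to fibration sequences (Segal), check that $A$ and $C$ are weak equivalences as in Goodwillie, and conclude that $B$ realizes to a weak equivalence, with the only new content being the well-definedness and simpliciality of $B$ in the presence of the twist $f$. This matches the paper's proof essentially point for point.
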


As a corollary when $f = \operatorname{Id}$ we recover Goodwillie's original computation.

\begin{cor}\cite[Sect.~V.1]{goodwillie_cyclic} For a topological space, the geometric realization of the cyclic bar construction on the based loop space is equivalent to the free loop space:
  \[
  B^{\text{cy}}(\Omega X) \simeq \mc{L} X 
  \]
  and for $X = BG$ where $G$ is a finite group
  \[
  B^{\text{cy}} (G) \simeq \mc{L} BG
  \]
\end{cor}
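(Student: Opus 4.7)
The approach is to derive both equivalences as immediate specializations of \cref{main_geometric_theorem}, plus a small identification of the based loop space with $G$ when $X = BG$. The substance of the work has already been done in the previous pages (via the simplicial diagram \eqref{main_diagram} and the comparisons $A$, $B$, $C$); the corollary should just be a matter of unwinding definitions when the twist is trivial.

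First I would set $f = \operatorname{Id}$ in \cref{main_geometric_theorem}. One checks that when $f$ is the identity, the pullback defining $\Omega^M X^f$ collapses to the ordinary Moore loop space $\Omega^M X$, and the bimodule action (where on the right $\gamma \cdot \beta = \gamma \ast f(\beta)$) becomes the standard two-sided action. Similarly, the defining pullback for $\mc{L} X^f$ collapses to the usual free loop space $\mc{L} X$, since the condition $\gamma(0) = f(\gamma(1))$ reduces to $\gamma(0) = \gamma(1)$. Thus the specialization of \cref{main_geometric_theorem} reads $B^{\mathrm{cy}}(\Omega^M X) \simeq \mc{L} X$, and since the Moore loop space is weakly equivalent to $\Omega X$ as a topological monoid (and $B^{\mathrm{cy}}$ preserves such equivalences on pointwise cofibrant inputs), this yields the first asserted equivalence $B^{\mathrm{cy}}(\Omega X) \simeq \mc{L} X$.

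For the second equivalence, I would use $X = BG$ for a finite (or more generally, discrete) group $G$. Since $\Omega BG \simeq G$ as a topological monoid, functoriality of $B^{\mathrm{cy}}$ under weak equivalences of monoids gives $B^{\mathrm{cy}}(G) \simeq B^{\mathrm{cy}}(\Omega BG)$. Combining this with the first part applied to $X = BG$ produces the desired chain
\[
B^{\mathrm{cy}}(G) \;\simeq\; B^{\mathrm{cy}}(\Omega BG) \;\simeq\; \mc{L} BG.
\]

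The only point requiring a moment of care, and what I would flag as the \emph{main} subtlety, is the comparison between the Moore model $\Omega^M X$ used throughout \cref{sect:identify_thh} and the naive model $\Omega X$ appearing in the corollary. This is the place where one needs to invoke that both models present the same object in the homotopy category of topological monoids and that the cyclic bar construction respects the relevant weak equivalences (so as to pass the equivalence $B^{\mathrm{cy}}(\Omega^M X) \simeq \mc{L} X$ to $B^{\mathrm{cy}}(\Omega X) \simeq \mc{L} X$). Once that comparison is in place, the corollary follows with no further computation.
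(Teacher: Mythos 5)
Your proposal is correct and matches the paper's own treatment: the paper simply observes that the corollary is the specialization of \cref{main_geometric_theorem} at $f = \operatorname{Id}$, which is exactly your first step. The extra care you take with the Moore versus naive loop space models (and with $G \simeq \Omega BG$ in the $X = BG$ case) is a reasonable elaboration of details the paper leaves implicit, since throughout the appendix $\Omega X$ is tacitly understood via its strict Moore model.
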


As another corollary, we have a generalization of the classical computation \[\thh (\Sigma^\infty_+ \Omega X) \simeq \Sigma^\infty_+ \mc{L} X.\] 

\begin{cor}\label{cor:identify_thh_loop}
  \[
  \thh(\Sigma^\infty_+\Omega X; \Sigma^\infty_+ \Omega X^f) \simeq \Sigma^\infty_+ \mc{L} X^f 
  \]
\end{cor}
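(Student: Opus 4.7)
The plan is to deduce Corollary \ref{cor:identify_thh_loop} essentially immediately from \cref{main_geometric_theorem} by applying the suspension spectrum functor $\Sigma^\infty_+$ and exploiting its compatibility with the cyclic bar construction. The key classical fact is that for a pointwise cofibrant topological monoid $M$ acting on itself through a twisted $(M,M)$-bimodule $N$, one has a levelwise identification
\[
N^{\text{cy}}_n(\Sigma^\infty_+ M; \Sigma^\infty_+ N) \cong \Sigma^\infty_+\bigl( M^{\times n} \times N \bigr) = \Sigma^\infty_+ B^{\text{cy}}_n(M;N),
\]
and $\Sigma^\infty_+$ commutes with geometric realization of simplicial spaces (up to the usual cofibrancy issues that are handled by working in a modern category of spectra such as $EKMM$ or orthogonal spectra). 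Thus we get a weak equivalence $\thh(\Sigma^\infty_+ M;\Sigma^\infty_+ N)\simeq \Sigma^\infty_+ B^{\text{cy}}(M;N)$.

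First I would replace the honest loop space $\Omega X$ by the Moore loop space $\Omega^M X$; the canonical map $\Omega^M X\to \Omega X$ is a homotopy equivalence, and $\Omega^M X$ is a strict topological monoid. Analogously, $\Omega^M X^f\to \Omega X^f$ is a homotopy equivalence of $(\Omega^M X,\Omega^M X)$-bimodules (where the bimodule structure is the twisted one defined before \cref{defn:twisted_loop_space}). Consequently the induced map
\[
\thh(\Sigma^\infty_+\Omega^M X;\Sigma^\infty_+\Omega^M X^f)\longrightarrow \thh(\Sigma^\infty_+\Omega X;\Sigma^\infty_+\Omega X^f)
\]
is a weak equivalence, since $\thh$ preserves pointwise weak equivalences between pointwise cofibrant inputs.

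Second I would apply the classical identification above to obtain
\[
\thh(\Sigma^\infty_+\Omega^M X;\Sigma^\infty_+\Omega^M X^f)\simeq \Sigma^\infty_+ B^{\text{cy}}(\Omega^M X;\Omega^M X^f),
\]
and finally invoke \cref{main_geometric_theorem}, which identifies the right-hand space with $\mc{L} X^f$. Concatenating these equivalences yields the required zig-zag
\[
\thh(\Sigma^\infty_+\Omega X;\Sigma^\infty_+\Omega X^f)\simeq \Sigma^\infty_+ \mc{L} X^f.
\]

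The only real obstacle is a technical one: verifying that the identification of $\thh$ of a suspension spectrum with the suspension spectrum of the cyclic bar construction is compatible with the twisted bimodule coordinate. This amounts to checking that the right action of $\Sigma^\infty_+\Omega^M X$ on $\Sigma^\infty_+\Omega^M X^f$ obtained by applying $\Sigma^\infty_+$ to the twisted multiplication agrees with the right action used in the definition of $N^{\text{cy}}$. This is a direct unwinding of definitions, but it is the one place where one must be careful not to lose the twist by $f$; the substantive geometric content (the identification of $B^{\text{cy}}(\Omega^M X,\Omega^M X^f)$ with $\mc{L} X^f$) is handled by \cref{main_geometric_theorem}.
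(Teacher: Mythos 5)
Your proof is correct and follows the same route the paper intends: the corollary is deduced from \cref{main_geometric_theorem} by applying $\Sigma^\infty_+$ and using its compatibility (levelwise and with geometric realization) with the cyclic bar construction, exactly as the paper sketches in the paragraph preceding the appendix. The additional details you supply (Moore loop space replacement, pointwise cofibrancy, and the check that the twist survives passage to spectra) are correct and are the right technical points to raise.
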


The space $\mc{L} X^f$ is the space of homotopy fixed points of a self-map $f\colon  X \to X$, computed by taking a ``derived intersection'' of $X$ and the image of $X$ under $f$. One could wish to have a similar $\thh$ description of the derived intersection of two maps $f, g\colon  X \to Y$. The following corollary is a more general statement, and easy corollary of the proof of \cref{main_geometric_theorem}. Though we do not use this generality in the paper, it is useful to record for later work. 

\begin{cor}
  Let $f, g\colon  X \to Y$ be self maps and let $\mc{L} Y^{f,g}$ be the homotopy pullback
  \[
  \xymatrix{
    \mc{L} Y^{f,g} \ar[d]\ar[r] & \mc{P} Y \ar[d]^{\text{ev}_0, \text{ev}_1} \\
    X \ar[r]_{f \times g} & Y \times Y
  }
  \]
  Similarly, let $\Sigma^\infty_+ \Omega Y^{f,g}$ be the $(\Sigma^\infty_+ \Omega Y, \Sigma^\infty_+ \Omega Y)$-bimodule $\Sigma^\infty_+ \Omega X$ with left action by $f$ and right action by $g$. Then
  \[
  \thh(\Sigma^\infty_+ \Omega X; \Sigma^\infty_+ \Omega X^{f,g}) \simeq \Sigma^\infty_+ \mc{L} Y^{f,g}.
  \]
\end{cor}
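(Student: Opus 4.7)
The plan is to mirror the proof of \cref{main_geometric_theorem} with the pair $(f,g)$ in place of $(\id,f)$.

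First I would set up the Moore-path model. Define $\Omega^M Y\langle f,g\rangle$ to consist of triples $(x,\gamma,u)$ where $x\in X$ and $(\gamma,u)\in \mc{P}^M Y$ is a Moore path from $f(x)$ to $g(x)$. Assuming basepoints are compatible so that $f(x_0)=g(x_0)=y_0$, this carries a strict $(\Omega^M Y,\Omega^M Y)$-bimodule structure by pre- and post-concatenating loops at $y_0$, and after applying $\Sigma^\infty_+$ it models the bimodule appearing in the statement.

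Second I would construct the analog of the diagram used to prove \cref{main_geometric_theorem}. The middle row maps $N^{\mathrm{cy}}(\Omega^M Y,\Omega^M Y\langle f,g\rangle)$ to $(\mc{L} Y^{f,g})^{\Delta^\bullet}$; the upper row compares fibers $(\Omega^M Y\langle f,g\rangle)_{\bullet,c}$ with $(\Omega Y^{f,g})^{\Delta^\bullet}$; and the lower row compares bases, both of which are $X$, since the fibration being modeled is $\Omega Y^{f,g}\to \mc{L} Y^{f,g}\to X$. The central map $B$ is defined by reparameterizing and concatenating Moore paths as in the appendix, but the cyclic wraparound is now twisted asymmetrically: the ``initial-side'' loop factors are fed through $f$ and the ``terminal-side'' loop factors through $g$, so that each parameterized path begins at $f(x)$ and ends at $g(x)$ for the $x$-coordinate of the bimodule entry.

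Third, Goodwillie's argument \cite[\S V]{goodwillie_cyclic} applies essentially unchanged: after geometric realization both columns become fibration sequences by \cite{segal}, the fiber map $A$ is the standard Moore-versus-singular loop equivalence, and the base map $C$ is the classifying-space equivalence. Hence $B$ is a weak equivalence on realizations, and identifying $|N^{\mathrm{cy}}|$ with $\thh$ completes the proof.

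The main obstacle is writing down the map $B$ with the correct endpoints. With two distinct twisting maps the concatenation formula interleaves $f$ and $g$ asymmetrically, and verifying both the endpoint conditions (so each parameterized path lands in $\mc{L} Y^{f,g}$) and the simplicial identities is bookkeeping-heavy, although no conceptually new idea is required beyond the appendix's argument.
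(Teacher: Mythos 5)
Your overall strategy — redo the proof of the main geometric theorem with the asymmetric pair $(f,g)$ — matches the paper, whose own proof is literally the one line ``The proof is identical to that of [the main geometric theorem].'' But the specific setup you wrote down has two errors that prevent it from being a proof.

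First, the ``bimodule'' $\Omega^M Y\langle f,g\rangle$ you define carries the $x$-coordinate, so it is the \emph{total space} $\mc{L} Y^{f,g}$, not the fiber of $\mc{L} Y^{f,g}\to X$, and it does not carry a bimodule structure over any one topological monoid: for $(x,\gamma)$ with $\gamma$ running from $f(x)$ to $g(x)$, pre- or post-concatenation with a loop at the basepoint $y_0$ is undefined unless $x=x_0$. The correct analogue of $\Omega^M X^f$ (which is Moore paths from $\ast$ to $f(\ast)$, not a space over $X$) is the space of Moore paths in $Y$ from $f(x_0)$ to $g(x_0)$.

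Second, the outer factors of your cyclic bar construction $N^{\text{cy}}(\Omega^M Y,-)$ are loops in $Y$, so the bottom row of your comparison diagram is $B_\bullet\Omega^M Y$, whose realization is $Y$, not $X$. This contradicts your own statement that the base of the fibration being modeled is $X$. The ring should be $\Omega^M X$: then the bimodule (paths in $Y$ from $f(x_0)$ to $g(x_0)$) is an $(\Omega^M X,\Omega^M X)$-bimodule via $\alpha\cdot\gamma=f(\alpha)\ast\gamma$ and $\gamma\cdot\beta=\gamma\ast g(\beta)$, and $|B_\bullet \Omega^M X|\simeq X$ gives the correct base. Tellingly, your verbal description of $B$ --- that the ``initial-side'' loop factors are fed through $f$ and the ``terminal-side'' loops through $g$ --- only makes sense if those factors already live in $\Omega^M X$ (so that $f,g$ can push them into $Y$); this is the right picture, but it conflicts with the $N^{\text{cy}}(\Omega^M Y,\Omega^M Y\langle f,g\rangle)$ you displayed. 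Once these two fixes are made (ring $\Omega^M X$; bimodule the fiber, not the total space), the rest of your argument --- Segal's theorem on both columns, Goodwillie's equivalences $A$ and $C$ --- goes through as you describe.
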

\begin{proof}
The proof is identical to that of \cref{main_geometric_theorem}
\end{proof}

\bibliographystyle{amsalpha2}
\bibliography{fixed_points}

\providecommand{\bysame}{\leavevmode\hbox to3em{\hrulefill}\thinspace}
\providecommand{\MR}{\relax\ifhmode\unskip\space\fi MR }
% \MRhref is called by the amsart/book/proc definition of \MR.
\providecommand{\MRhref}[2]{%
  \href{http://www.ams.org/mathscinet-getitem?mr=#1}{#2}
}
\providecommand{\doi}[1]{%
  doi:\href{https://dx.doi.org/#1}{#1}}
\providecommand{\arxiv}[1]{%
  arXiv:\href{https://arxiv.org/abs/#1}{#1}}
\providecommand{\href}[2]{#2}
\begin{thebibliography}{EKMM97}

\bibitem[AS68]{atiyah_singer}
M.~F. Atiyah and I.~M. Singer, \emph{The index of elliptic operators. {I}},
  Ann. of Math. (2) \textbf{87} (1968), 484--530. \doi{0.2307/1970715}

\bibitem[Bar16]{barwick}
C.~Barwick, \emph{On the algebraic {$K$}-theory of higher categories}, J.
  Topol. \textbf{9} (2016), no.~1, 245--347. \doi{10.1112/jtopol/jtv042}
  \arxiv{1204.3607}

\bibitem[B{\'{e}}n67]{benabou}
J.~B{\'{e}}nabou, \emph{Introduction to bicategories}, Reports of the {M}idwest
  {C}ategory {S}eminar, Springer, Berlin, 1967, pp.~1--77.

\bibitem[BM90]{bentzen_madsen}
S.~Bentzen and I.~Madsen, \emph{Trace maps in algebraic {$K$}-theory and the
  {C}oates-{W}iles homomorphism}, J. Reine Angew. Math. \textbf{411} (1990),
  171--195. \doi{10.1515/crll.1990.411.171}

\bibitem[BGT13]{blumberg_gepner_tabuada}
A.~J. Blumberg, D.~Gepner, and G.~Tabuada, \emph{A universal characterization
  of higher algebraic {$K$}-theory}, Geom. Topol. \textbf{17} (2013), no.~2,
  733--838. \doi{10.2140/gt.2013.17.733} \arxiv{1001.2282}

\bibitem[BM12]{blumberg_mandell}
A.~J. Blumberg and M.~A. Mandell, \emph{Localization theorems in topological
  {H}ochschild homology and topological cyclic homology}, Geom. Topol.
  \textbf{16} (2012), no.~2, 1053--1120. \doi{10.2140/gt.2012.16.1053}
  \arxiv{0802.3938}

\bibitem[BHM93]{bokstedt_hsiang_madsen}
M.~B{\"o}kstedt, W.~C. Hsiang, and I.~Madsen, \emph{The cyclotomic trace and
  algebraic {$K$}-theory of spaces}, Invent. Math. \textbf{111} (1993), no.~3,
  465--539. \doi{10.1007/BF01231296}

\bibitem[B{\"{o}}k]{bokstedt}
M.~B{\"{o}}kstedt, \emph{Topological {H}ochschild {H}omology}, Preprint.

\bibitem[Dol65]{dold_idx}
A.~Dold, \emph{Fixed point index and fixed point theorem for {E}uclidean
  neighborhood retracts}, Topology \textbf{4} (1965), 1--8.
  \doi{10.1016/0040-9383(65)90044-3}

\bibitem[DP80]{dold_puppe}
A.~Dold and D.~Puppe, \emph{Duality, trace, and transfer}, Proceedings of the
  {I}nternational {C}onference on {G}eometric {T}opology ({W}arsaw, 1978)
  (Warsaw), PWN, 1980, pp.~81--102.

\bibitem[DM96]{dundas_mccarthy}
B.~I. Dundas and R.~McCarthy, \emph{Topological {H}ochschild homology of ring
  functors and exact categories}, J. Pure Appl. Algebra \textbf{109} (1996),
  no.~3, 231--294. \doi{10.1016/0022-4049(95)00089-5}

\bibitem[EKMM97]{EKMM}
A.~D. Elmendorf, I.~Kriz, M.~A. Mandell, and J.~P. May, \emph{Rings, modules,
  and algebras in stable homotopy theory}, Mathematical Surveys and Monographs,
  vol.~47, American Mathematical Society, Providence, RI, 1997, With an
  appendix by M. Cole.

\bibitem[Gh66]{shi}
S.~Gen-hua, \emph{On least number of fixed points and {N}ielsen numbers},
  Chinese Math.--Acta \textbf{8} (1966), 234--243.

\bibitem[Goo85]{goodwillie_cyclic}
T.~G. Goodwillie, \emph{Cyclic homology, derivations, and the free loopspace},
  Topology \textbf{24} (1985), no.~2, 187--215.
  \doi{10.1016/0040-9383(85)90055-2}

\bibitem[Hat65]{hattori}
A.~Hattori, \emph{Rank element of a projective module}, Nagoya Math. J.
  \textbf{25} (1965), 113--120.

\bibitem[Iwa99]{iwashita}
Y.~Iwashita, \emph{The {L}efschetz-{R}eidemesiter trace in algebraic
  $k$-theory}, Ph.D. thesis, University of Illinois at Urbana-Champaign, 1999.

\bibitem[Kal15]{kaledin}
D.~Kaledin, \emph{Trace theories and localization}, Stacks and categories in
  geometry, topology, and algebra, Contemp. Math., vol. 643, Amer. Math. Soc.,
  Providence, RI, 2015, pp.~227--262. \doi{10.1090/conm/643/12900}
  \arxiv{1308.3743}

\bibitem[Lei]{leinster}
T.~Leinster, \emph{Basic bicategories}. \arxiv{math/9810017}

\bibitem[LM16]{lind_malkiewich}
J.~A. Lind and C.~Malkiewich, \emph{The transfer map of free loop spaces},
  2016. \arxiv{1604.03067}

\bibitem[LM17]{lind_malkiewich_morita}
\bysame, \emph{The {M}orita equivalence between parametrized spectra and module
  spectra}, 2017. \arxiv{1702.07794}

\bibitem[MMSS01]{mandell_may_shipley_schwede}
M.~A. Mandell, J.~P. May, S.~Schwede, and B.~Shipley, \emph{Model categories of
  diagram spectra}, Proc. London Math. Soc. (3) \textbf{82} (2001), no.~2,
  441--512. \doi{10.1112/S0024611501012692}

\bibitem[MS06]{maysig:pht}
J.~P. May and J.~Sigurdsson, \emph{Parametrized homotopy theory}, Mathematical
  Surveys and Monographs, vol. 132, American Mathematical Society, Providence,
  RI, 2006. \arxiv{math/0411656}

\bibitem[Pon10]{p:thesis}
K.~Ponto, \emph{Fixed point theory and trace for bicategories}, Ast\'erisque
  (2010), no.~333, xii+102. \arxiv{0807.1471}

\bibitem[Pon16]{p:coincidence}
\bysame, \emph{Coincidence invariants and higher {R}eidemeister traces}, J.
  Fixed Point Theory Appl. \textbf{18} (2016), no.~1, 147--165.
  \doi{10.1007/s11784-015-0269-5} \arxiv{1209.3710}

\bibitem[PS12]{ps:indexed}
K.~Ponto and M.~Shulman, \emph{Duality and traces for indexed monoidal
  categories}, Theory Appl. Categ. \textbf{26} (2012), No. 23, 582--659.
  \arxiv{1211.1555}

\bibitem[PS13]{ps:bicat}
\bysame, \emph{Shadows and traces in bicategories}, J. Homotopy Relat. Struct.
  \textbf{8} (2013), no.~2, 151--200. \doi{10.1007/s40062-012-0017-0}
  \arxiv{0910.1306}

\bibitem[PS14]{ps:mult}
\bysame, \emph{The multiplicativity of fixed point invariants}, Algebr. Geom.
  Topol. \textbf{14} (2014), no.~3, 1275--1306. \doi{10.2140/agt.2014.14.1275}
  \arxiv{1203.0950}

\bibitem[Sch98]{schlichtkrull_1}
C.~Schlichtkrull, \emph{The transfer map in topological {H}ochschild homology},
  J. Pure Appl. Algebra \textbf{133} (1998), no.~3, 289--316.
  \doi{10.1016/S0022-4049(97)00117-5}

\bibitem[Sch06]{schlichtkrull_2}
\bysame, \emph{Transfer maps and the cyclotomic trace}, Math. Ann. \textbf{336}
  (2006), no.~1, 191--238. \doi{10.1007/s00208-006-0783-2}

\bibitem[SS03]{schwede_shipley}
S.~Schwede and B.~Shipley, \emph{Equivalences of monoidal model categories},
  Algebr. Geom. Topol. \textbf{3} (2003), 287--334.
  \doi{10.2140/agt.2003.3.287}

\bibitem[Seg74]{segal}
G.~Segal, \emph{Categories and cohomology theories}, Topology \textbf{13}
  (1974), 293--312. \doi{10.1016/0040-9383(74)90022-6}

\bibitem[Shu08]{shulman_framed}
M.~Shulman, \emph{Framed bicategories and monoidal fibrations}, Theory Appl.
  Categ. \textbf{20} (2008), No. 18, 650--738. \arxiv{0706.1286}

\bibitem[Sta65]{stallings}
J.~Stallings, \emph{Centerless groups---an algebraic formulation of
  {G}ottlieb's theorem}, Topology \textbf{4} (1965), 129--134.
  \doi{10.1016/0040-9383(65)90060-1}

\bibitem[Wal85]{waldhausen}
F.~Waldhausen, \emph{Algebraic {$K$}-theory of spaces}, Algebraic and geometric
  topology ({N}ew {B}runswick, {N}.{J}., 1983), Lecture Notes in Math., vol.
  1126, Springer, Berlin, 1985, pp.~318--419. \doi{10.1007/BFb0074449}

\bibitem[Wec41]{wecken}
F.~Wecken, \emph{Fixpunktklassen. {I}}, Math. Ann. \textbf{117} (1941),
  659--671. \doi{10.1007/BF01450034}

\end{thebibliography}

\end{document}